\newcounter{minutes}\setcounter{minutes}{\time}
\newcounter{hours}\setcounter{hours}{\time}
\title[]{Efficient third-order iterative algorithms for computing zeros of special functions} 
\author[]{Dhivya Prabhu K}
\address{Discipline of Mathematics,
Indian Institute of Technology Indore, Indore 453552, India}
\email{dhivyanslm@gmail.com}
\author[]{Sanjeev Singh}
 \address{Discipline of Mathematics,
	Indian Institute of Technology Indore, Indore 453552, India}
    \email{snjvsngh@iiti.ac.in}
\author[]{Antony Vijesh V}
\address{Department of Mathematics,
	Indian Institute of Technology Indore, Indore 453552, India}
    \email{vijesh@iiti.ac.in}
\newtheorem{theorem}{Theorem}
\newtheorem{remark}{Remark}
\newtheorem{lemma}{Lemma}
\begin{document}
	
	\def\thefootnote{}
	\footnotetext{ \texttt{File:~\jobname.tex,
			printed: \number\year-\number\month-\number\day,
			\thehours.\ifnum\theminutes<10{0}\fi\theminutes}
	} \makeatletter\def\thefootnote{\@arabic\c@footnote}\makeatother

	\keywords{Zeros of Bessel function; Zeros of cylinder function; Zeros of confluent hypergeometric function; Zeros of Coulomb wave function; Zeros of orthogonal polynomials}
	
	\subjclass[2022]{65H04, 65H05, 33F05}
	
	\maketitle
	\begin{abstract}	

       This manuscript presents a novel and reliable third-order iterative procedure for computing the zeros of solutions to second-order ordinary differential equations. By approximating the solution of the related Riccati differential equation using the trapezoidal rule, this study has derived the proposed third-order method. This work establishes sufficient conditions to ensure the theoretical non-local convergence of the proposed method. This study provides suitable initial guesses for the proposed third-order iterative procedure to compute all zeros in a given interval of the solutions to second-order ordinary differential equations. The orthogonal polynomials like Legendre and Hermite, as well as the special functions like Bessel, Coulomb wave, confluent hypergeometric, and cylinder functions, satisfy the proposed conditions for convergence. Numerical simulations demonstrate the effectiveness of the proposed theory. This work also presents a comparative analysis with recent studies. 	
	\end{abstract}
	
	\section{\bf Introduction}
	Developing algorithms to compute the zeros of nonlinear functions is one of the demanding problems in various disciplines. Finding the zeros of special functions and orthogonal polynomials is especially common in many applications. For example, the zeros of orthogonal polynomials play a crucial role in the spectral method for solving differential equations and the quadrature rule for evaluating integrals (for example, see \cite{app_op-1, OST20}). Similarly, the zeros of the Bessel function regularly appear in scattering theory and quantum mechanics \cite{app_bf-1, app_bf-2}. The zeros of the confluent hypergeometric function are zeros of the well-known Whittaker function. The zeros of the Coulomb wave function play an important role in developing numerical methods to solve the time-dependent Schrodinger equation \cite{coul1}. Moreover, the zeros of the Coulomb wave function also provide bounds for the eigenvalues of boundary value problems \cite{Baricz1}. From the literature, it is evident that consistent efforts were made to find all the zeros of Legendre polynomial \cite{i3,  ni2, alex2, i4, t5, i5}, Hermite polynomial \cite{seg1, i1, Comp.phy}, Bessel function \cite{matrix, bremer, Bessel24, i1}, cylinder function \cite{ 1998, seg2, 2010}, the confluent hypergeometric function \cite{whit, 2010}, and Coulomb wave function \cite{matrix, 2003, cwf2} using various techniques. It is worth mentioning that in all these situations, one typically has to find more than one zero. In many places, special functions have an infinite number of zeros. Thus, finding all the zeros in a given interval is a crucial problem that attracts many researchers to investigate it.

	Although several techniques are available in the literature, approximating zeros using an iterative procedure is a popular approach among researchers. As all iterative procedures begin with an initial guess, finding a suitable initial guess is a crucial task. This task becomes quite challenging when attempting to find multiple zeros of a nonlinear function. In such a situation, either one has to determine a suitable subdomain containing precisely one of the zeros such that the iterative method has a convergence property in that subdomain or an initial guess sufficiently close to the corresponding zero such that the iterative method converges to that zero. We call this step bracketing. Hence, developing an iterative algorithm with a built-in bracketing property to find all the zeros of a function is a vital problem. Moreover, as the behaviour of the location of zero changes immensely for different functions, developing unified iterative algorithms with a built-in bracketing property that can handle a large class of functions is an interesting and challenging problem.
	
	Newton's method is one of the iterative methods widely used for finding the zeros of nonlinear functions. Newton's method has a non-local, second-order convergence property if the nonlinear function is convex and has a unique zero. In some situations, Newton's method is hypersensitive to the initial guess (for example, see \cite{book}). To overcome this issue, most modern algorithms \cite{i3, i1, alex2, alex1} employ different techniques to obtain an initial guess for each zero. Later, Newton's method is used to get the refined version of the zeros. In \cite{i3}, an algorithm was studied for finding Gauss-Legendre quadrature nodes. In this algorithm, to obtain the $i$-th node corresponding to the $n$-th degree polynomial, Newton's method uses $\pi\frac{4i+3}{4n+2}$  as the initial guess, which is obtained from the asymptotic formula \cite{asf1}. In \cite{i1}, an algorithm was developed to find all the zeros of the Bessel function, Hermite polynomial, Legendre polynomial, and prolate spheroidal wave function. First, using the Pr$\ddot{\mathrm{u}}$fer transform, an initial value problem is obtained corresponding to each function/polynomial. Assuming the first zero, a crude approximation for the rest of the zeros is obtained successively by solving the initial value problem using either the Runge-Kutta or the Taylor series method. This crude approximation was used as an initial guess for Newton's method. In \cite{alex2}, to find Gauss-Legendre and Gauss-Jacobi quadrature nodes, Newton's method is used with an initial guess supplied from asymptotic formulas involving zeros of the Bessel function. In \cite{alex1}, an algorithm was studied for finding Gauss-Hermite quadrature nodes using the Newton method. In this algorithm, to obtain the initial guess, one must either solve a nonlinear equation or an asymptotic formula using the zeros of the Airy function.

	An interesting Newton method was studied in \cite{1998}, to find all the zeros of the cylinder function $C_\nu(x)$, $\nu \in \mathbb{R}$. This method is based on the fact that the zeros of $C_\nu(x)$ and $C_{\nu-1}(x)$ are interlaced. Consequently, the function $H_\nu(x)=\frac{C_\nu(x)}{C_{\nu-1}(x)}$ has same zeros as $C_\nu(x)$ and between two successive zeros of $C_{\nu-1}(x)$, $H_\nu(x)$ has exactly one zero. Using the relation between $C_\nu(x)$ and $C_{\nu-1}(x)$, it was shown that $H_\nu(x)$ satisfies an Riccati equation. Later, it was shown that between two successive zeros of $C_{\nu-1}(x)$, the Newton method has a nonlocal convergence property for the function $f_\nu(x)=x^{2\nu-1}H_\nu(x)$. In other words, between the singular points of $f_\nu(x)$, the second-order method has global convergence. Interestingly, the zeros of many special functions and orthogonal polynomials have this interlacing property. Thus, the idea in \cite{1998} was extended to special functions and orthogonal polynomials, which are solutions of second-order differential equations in \cite{2002}. More specifically,  in \cite{2002}, to find the zero of $y_n(x)$, the ratio $\frac{y_n(x)}{y_{n\pm1}(x)}$ was used. Similar to \cite{1998}, the function $Y(x)=\frac{y_n(x)}{y_{n\pm1}(x)}$ satisfy a Riccati differential equation. A new second-order iterative procedure for finding the zero of $\frac{y_n(x)}{y_{n\pm1}(x)}$ involving the $\arctan$ function is obtained using the Riccati equation. Also, it was shown that the new second-order method has a nonlocal convergence property between two successive zeros of $y_{n\pm1}$. i.e. between the singular points of $Y(x)$, the second-order method has global convergence. An important result related to the distance between the adjacent zeros of $y_n(x)$ and $y_{n\pm1}(x)$ is also provided. Using this result by adding or subtracting $\frac{\pi}{2}$ with the current zero of $y_n(x)$, one can get the initial values for the next zero of $y_n(x)$ for the new iterative method. Hence, the second-order iterative algorithm in \cite{2002} has a built-in bracketing property. This second-order iterative method is further extended in \cite{2003} to an arbitrary pair of the function $(y(x), w(x))$ such that the zeros of $y(x)$ and $w(x)$ are interlaced and satisfy a suitable first-order coupled differential equation(CDE) of the form
	\begin{equation}\label{02}
		\left\{\begin{aligned}
			y'(x)&=c_1(x)y(x)+c_2(x)w(x)\\
			w'(x)&=c_3(x)w(x)+c_4(x)y(x),
		\end{aligned}\right.
	\end{equation}
	where $c_1(x)$, $c_2(x)$, $c_3(x)$ and $c_4(x)$ are continuous functions. More specifically,  in \cite{2003}, to find the zero of $y(x)$, the function $\mathcal{Y}(x)=\frac{y(x)}{w(x)}$ was used, and the rest of the discussion is similar to \cite{2002}. Also, it was shown that the second-order method has a nonlocal convergence property between two successive zeros of $w(x)$. i.e. between the singular points of $\mathcal{Y}(x)$, the second-order method has global convergence. It is worth mentioning that in \cite{2003}, an improved version of the distance between the adjacent zeros of $y(x)$ and $w(x)$ is also provided. The second-order iterative algorithm discussed in \cite{2003} has a built-in bracketing property. A variety of applications are demonstrated in \cite{2003,2002}.

    A higher-order iterative algorithm was developed in \cite{2010} with a built-in bracketing property, following a similar direction to \cite{2003, 2002}. More specifically, to find the zero of $y(x)$, the ratio $\mathbb{Y}(x)=\frac {y(x)}{y'(x)}$ was considered, and $y(x)$ was assumed to be a non-trivial solution of a second-order linear differential equation $y''(x)+A(x)y(x)=0$. Similar to \cite{2003,1998,2002}, the ratio $\frac{y(x)}{y'(x)}$ satisfy a Riccati differential equation. By evaluating the integral arising from the corresponding Riccati equation semi-analytically, a fourth-order iterative procedure for computing all the zeros of the function $\frac{y(x)}{y'(x)}$ is derived in \cite{2010}. Similar to the iterative algorithm in \cite{2003, 1998, 2002}, the fourth-order iterative algorithm possesses a built-in bracketing property. However, to ensure the global convergence between the singular points of $\mathbb{Y}(x)$, \cite{2010} requires the strenuous condition $\frac{1}{4}< \frac{A(w_i)}{A(w_{i+1})}<4$, where $w_i$ and $w_{i+1}$ are successive singular point of $\mathbb{Y}(x)$. Though the iterative method in \cite{2010} has higher order convergence than the iterative methods in \cite{2003, 1998, 2002}, to ensure the global convergence between the singular points,  \cite{2010} requires additional conditions about the singular points than those in \cite{2003, 1998, 2002}. Similarly, when finding all the zeros of a special function in a given interval, \cite{2010} requires additional conditions about either singular points or the zeros than those in \cite{2003, 1998, 2002} to have a ``good nonlocal behaviour". For instance, let $\alpha_1 < \alpha_2$ be any two consecutive zeros of $\mathbb{Y}(x)$ and $\beta \in (\alpha_1,\alpha_2)$ be a unique singular point of $\mathbb{Y}(x)$. Let $\alpha^0_2$ be the initial guess provided in \cite{2010} for finding the zero $\alpha_2$ using $\alpha_1$. Then to ensure $\beta \notin (\alpha^0_2,\alpha_2)$, \cite{2010} requires additional strenuous condition either $\frac{A_M}{A(\beta)}<4$ or $\frac{1}{4}<\frac{A(\alpha_2)}{A(\alpha_1)}<4$ where $A_M$ is the maximum value of $A(x)$ in $[\alpha_1,\alpha_2]$. However, the initial guesses provided in \cite{2003, 1998, 2002} do not require such strenuous conditions. 

    This manuscript aims to address the following problem: ``Under the same conditions on the singular points as in \cite{2003, 2002}, does there exist a higher order iterative method having properties similar to \cite{2003, 2002}"? This study answers this question favourably by constructing a third-order iterative method with properties similar to \cite{2003, 2002}. More specifically,a novel, easily implementable third-order iterative method with a built-in bracketing property is proposed to find all the zeros of various types of special functions and orthogonal polynomials within a given interval. The new third-order iterative method is derived using the same assumptions of \cite{2003}. i.e. $y(x)$ and $w(x)$ are arbitrary functions such that the zeros of $y(x)$ and $w(x)$ are interlaced and satisfying \eqref{02}. To find the zero of $y(x)$, the ratio $\frac{y(x)}{w(x)}$ is used and reaches the Riccati equation as in \cite{2003}. The proposed novel third-order iterative method is obtained by approximating the solution of the Riccati equation by the trapezoidal rule, a key difference from the approaches in \cite{2003, seg1, seg11, log, 2002, 2010} and similar to that in \cite{CKL18, WF00}. Interestingly, the proposed novel third-order iterative method possesses a built-in bracketing property, as in \cite{2003, 2002, 2010}. Theoretically, it is shown that the proposed third-order iterative method has a global convergence property between two successive singular points. For this convergence, the conditions on the singular points are the same as those in \cite{2003, 2002} and not strenuous like those in \cite{2010}. Similarly, when finding all the zeros of a special function in a given interval, the conditions on either singular points or the zeros are the same as those in \cite{2003, 2002} and not strenuous like those in \cite{2010}. 
    The main contributions of the paper can be highlighted as follows:
	\begin{itemize}
		\item An easily implementable novel third-order iterative method is proposed for finding all the zeros of the solution $y(x)$ of a second-order linear ODE.
		\item The global convergence of the proposed iterative method has been established under suitable assumptions.
		\item The proposed iterative method has a built-in bracketing property.
        \item Theoretically, it was shown that the well-known Legendre and Hermite polynomials and the Bessel, Coulomb wave, confluent hypergeometric, and cylinder functions satisfy the proposed conditions for convergence.
        \item A detailed numerical simulation results were presented by handling Legendre polynomials, Hermite polynomials, Bessel functions and cylinder functions. 
        \item A comparative study with the recent methods in the literature \cite{alex1} and \cite{seg1,seg11} is also presented. 
	\end{itemize}
	
	The remainder of this paper is organised as follows: Section 2 presents some required preliminaries from the earlier literature. The construction of the proposed novel third-order iterative procedure is also given in Section 2. Section 3 displays the global convergence theorems for the proposed iterative procedure in finding all zeros within an interval. Section 4 displays the application of the proposed iterative method to find zeros of  Legendre and Hermite polynomials, as well as the Bessel, Coulomb wave, and confluent hypergeometric functions. Section 5 presents the numerical comparative study for the problem of finding all zeros of Hermite polynomials, Legendre polynomials, and finding all zeros of Bessel and cylinder functions in a given interval. We conclude our discussion in Section 6 by highlighting the summary of the work.

	\section{Preliminaries}
	This section presents the construction of a new iteration method \eqref{iter} for finding all zeros of the function $y(x)$ in the interval $I$, provided that there exist another function $w(x)$ such that the pair  $(y(x), w(x))$ satisfies the system of first order coupled differential equation \eqref{02} in the interval $I$, with the following conditions
	\begin{enumerate}
		\item Zeros of $y(x)$ and $w(x)$ are simple.
		\item $c_2(x)\neq0$ and $c_4(x)\neq 0$, $\hspace{3mm}\forall \hspace{2mm}x \in I.$
		\item Zeros of $y(x)$ and $w(x)$ are interlaced and $c_2(x)c_4(x)<0$, $\hspace{3mm}\forall \hspace{2mm}x \in I.$
	\end{enumerate}
	Conditions (1)-(3) are essential for constructing the iterative scheme. It is interesting to note that the existence of the function $w(x)$, with the three conditions mentioned above, are immediate consequence once we have the assumption that CDE \eqref{02} is satisfied simultaneously by the pairs $(y_1(x), w_1(x))$ and $(y_2(x), w_2(x))$, where $(y_1(x), y_2(x))$ and $(w_1(x), w_2(x))$ are fundamental solutions of ODEs \eqref{ode1} and \eqref{ode2}, respectively and $y(x)$ is a non-trivial solution of \eqref{ode1}. The following theorem explicitly presents this fact. 
	\begin{theorem}\cite{2003, 2002}\label{thm1}
		Let $A_{y}(x),$ $B_{y}(x),$ $A_w(x),$ $B_w(x),$ $c_1(x),$ $c_2(x),$ $c_3(x),$ and $c_4(x)$ are real valued continuous functions on $I.$ Let $\{y_1(x), y_2(x)\}$ and $\{w_1(x), w_2(x)\}$, $x \in I$ be fundamental sets of solutions of the second-order linear ODEs
		\begin{equation}\label{ode1}
			f''(x)+B_{y}(x)f'(x)+A_{y}(x)f(x)=0
		\end{equation} and
		\begin{equation}\label{ode2}
			f''(x)+B_{w}(x)f'(x)+A_{w}(x)f(x)=0,
		\end{equation}
		respectively. Further assume that $(y_1(x), w_1(x))$ and $(y_2(x), w_2(x))$ are solutions of coupled differential equation (CDE) \eqref{02} on $I$. If $y(x)$ is a non-trivial solution of \eqref{ode1}. Then, there exists a non-trivial function $w(x)$ which is a solution of \eqref{ode2} and the pair $(y(x), w(x))$ satisfies the CDE \eqref{02}. Furthermore, the following statements also hold true:
		\begin{enumerate}
			\item Zeros of $y(x)$ and $w(x)$ are simple.
			\item $c_2(x) \neq 0$ and $c_4(x) \neq 0$,  $\hspace{3mm}\forall \hspace{2mm}x \in I$.
			\item Zeros of $y(x)$ and $w(x)$ are interlaced, and $c_2(x)c_4(x)<0$,  $\hspace{2mm}\forall \hspace{2mm}x \in I$ provided one of the functions, either $y(x)$ or $w(x)$, has at least two zeros in $I$.
		\end{enumerate}
	\end{theorem}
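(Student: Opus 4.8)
The plan is to reduce the whole assertion to linearity of the CDE \eqref{02} together with two standard facts about second-order linear ODEs: non-vanishing on an interval of the Wronskian of a fundamental set (Abel's identity), and uniqueness for the initial value problem. The first move is to construct $w$ concretely. Since $\{y_1,y_2\}$ is a fundamental set for \eqref{ode1} and $y\not\equiv 0$ solves \eqref{ode1}, I would write $y=ay_1+by_2$ with $(a,b)\neq(0,0)$ and simply \emph{define} $w:=aw_1+bw_2$. Then $w$ solves \eqref{ode2} by linearity and homogeneity of \eqref{ode2}, and the pair $(y,w)$ solves \eqref{02} by linearity of \eqref{02} and the hypothesis that both $(y_1,w_1)$ and $(y_2,w_2)$ solve it. Non-triviality of $w$ is forced by linear independence of $\{w_1,w_2\}$ (if $w\equiv 0$ then $a=b=0$, contradicting $y\not\equiv 0$), and this $w$ is the unique partner of $y$ because the first line of \eqref{02} reads $w=(y'-c_1 y)/c_2$ once $c_2$ is known to be nowhere zero.

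Next I would dispatch statements (1) and (2). For (2): cross-multiplying the first line of \eqref{02} for the pairs $(y_1,w_1)$ and $(y_2,w_2)$ and subtracting cancels the $c_1$-terms, leaving $y_1'y_2-y_2'y_1=c_2\,(w_1y_2-w_2y_1)$; the left side is, up to sign, the Wronskian of $\{y_1,y_2\}$, which by Abel's identity never vanishes on the interval $I$, so $c_2(x)\neq 0$ throughout $I$; the identical manipulation of the second line gives $c_4(x)\neq 0$ throughout $I$. For (1): a double zero of $y$ would force $y\equiv 0$ by uniqueness for \eqref{ode1} (and likewise for $w$ and \eqref{ode2}), and $y$ and $w$ cannot share a zero $x_0$, since then the first line of \eqref{02} gives $y'(x_0)=c_2(x_0)w(x_0)=0$, again a double zero of $y$. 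So the zeros of $y$ and $w$ are simple and mutually disjoint.

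For statement (3) I would argue Sturm-style. Since $c_2$ and $c_4$ are continuous and nowhere zero on the interval $I$, each has a constant sign there. Invoking the hypothesis that one of $y,w$ — say $y$, the other case being identical under the symmetry $y\leftrightarrow w$, $c_1\leftrightarrow c_3$, $c_2\leftrightarrow c_4$ of the whole configuration — has at least two zeros, I would fix consecutive zeros $x_1<x_2$ of $y$ and, after replacing $(y,w)$ by $(-y,-w)$ if needed (which changes none of the $c_i$), assume $y>0$ on $(x_1,x_2)$, so $y'(x_1)>0>y'(x_2)$. Evaluating the first line of \eqref{02} at $x_1,x_2$ gives $y'(x_i)=c_2(x_i)w(x_i)$, so $w(x_1)$ and $w(x_2)$ have opposite signs and $w$ vanishes somewhere in $(x_1,x_2)$; it vanishes exactly once, because two consecutive zeros $z_1<z_2$ of $w$ inside $(x_1,x_2)$ would (by the mirror-image argument using the second line of \eqref{02} and the constant sign of $c_4$) force $y$ to change sign on $(z_1,z_2)\subset(x_1,x_2)$ — impossible. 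Symmetrically, between consecutive zeros of $w$ there is exactly one zero of $y$, so the zero sets interlace. Finally, for $c_2c_4<0$: with $z\in(x_1,x_2)$ the unique zero of $w$, the sign data above give $\operatorname{sign} w=\operatorname{sign} c_2$ on $(x_1,z)$ and $\operatorname{sign} w=-\operatorname{sign} c_2$ on $(z,x_2)$, hence $\operatorname{sign} w'(z)=-\operatorname{sign} c_2$; but the second line of \eqref{02} at $z$ gives $w'(z)=c_4(z)y(z)$ with $y(z)>0$, so $\operatorname{sign} w'(z)=\operatorname{sign} c_4$, forcing $\operatorname{sign} c_4=-\operatorname{sign} c_2$ on all of $I$.

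I expect the only genuinely delicate part to be this last step: the sign bookkeeping around the zero $z$ and the precise handling of the "at least two zeros" hypothesis. One must check that the hypothesis is used exactly once (to produce the pair $x_1<x_2$), that the case $y<0$ on $(x_1,x_2)$ really does reduce to $y>0$ via the sign flip $(y,w)\mapsto(-y,-w)$, and that the case "$w$ has two zeros" is genuinely subsumed by the stated symmetry rather than needing separate treatment. Everything preceding it is routine linear-ODE theory, so the write-up effort is concentrated entirely in organising these sign arguments cleanly.
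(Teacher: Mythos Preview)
The paper does not supply a proof of Theorem~\ref{thm1}; it is stated with the citation \cite{2003,2002} and used as a black box. So there is no ``paper's own proof'' to compare against.

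That said, your argument is correct and is essentially the standard one appearing in the cited sources. The construction $w=aw_1+bw_2$ from $y=ay_1+by_2$ is exactly how the partner function is produced in \cite{2003}, and your derivation of $c_2\neq 0$, $c_4\neq 0$ via the identity $y_1'y_2-y_2'y_1=c_2(w_1y_2-w_2y_1)$ and Abel's formula matches Lemma~2.3 of \cite{2002}. The Sturm-type interlacing step and the sign computation for $c_2c_4$ are also the arguments used there. Your bookkeeping in the final paragraph is sound: the sign flip $(y,w)\mapsto(-y,-w)$ is harmless, the symmetry $y\leftrightarrow w$ genuinely covers the ``$w$ has two zeros'' case, and the ``at least two zeros'' hypothesis is needed precisely to furnish a pair $x_1<x_2$ on which to run the argument. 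One small point worth making explicit in a write-up: when you infer $\operatorname{sign} w'(z)=-\operatorname{sign} c_2$ from the sign pattern of $w$ near $z$, you are implicitly using that $z$ is a \emph{simple} zero of $w$, which you have already established in part~(1).
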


	\textbf{Assumption:} Throughout this paper we will assume that the pair $(y(x), w(x))$ satisfies the CDE \eqref{02} in the interval $I$ and the statements \textit{(1), (2)} and \textit{(3)} in Theorem \ref{thm1} also hold true.
	\subsection{Construction of third-order iterative method}\label{2}
	First, we derive a Riccati differential equation using $y(x)$ and $w(x)$ in the CDE \eqref{02}. To proceed further, we introduce the following change of variable mentioned in \cite{2003, 2002}. Let $y(x)=\tilde{y}(x)$ and $w(x)=\text{sign}(c_2(x))k(x)\tilde{w}(x),$ where $k(x)=\sqrt{-\frac{c_4(x)}{c_2(x)}}$. Consequently, the zeros of $\tilde{y}(x)$ and $\tilde{w}(x)$ are the same as the zeros of $y(x)$ and $w(x)$, respectively. Note that $(\tilde{y}(x),\tilde{w}(x))$ satisfy the following CDE 
	\begin{equation}\label{dde2}
		\left\{\begin{aligned}
			\tilde{y}'(x)&=\tilde{c}_1(x)\tilde{y}(x)+\tilde{c}_2(x)\tilde{w}(x)\\
			\tilde{w}'(x)&=\tilde{c}_3(x)\tilde{w}(x)+\tilde{c}_4(x)\tilde{y}(x),
		\end{aligned}\right.
	\end{equation}
	where $\tilde{c}_1(x)=c_1(x),\hspace{3mm} \tilde{c}_2(x)=|c_2(x)|k(x),\hspace{3mm} \tilde{c}_3(x)=\left[c_3(x)-\dfrac{k'(x)}{k(x)}\right]$, and $\hspace{2mm}\tilde{c}_4(x)=\text{sign}(c_2)\dfrac{c_4(x)}{k(x)}$. Hence, $\tilde{c}_2(x)>0$ and $\dfrac{\tilde{c}_2(x)}{\tilde{c}_4(x)}=-1$.
	
	Let $w_*$ and $w_{**}$ be two successive zeros of $w(x)$. Define $t(x)=\dfrac{\tilde{y}(x)}{\tilde{w}(x)}$ $\hspace{2mm}\forall\hspace{2mm}x \in J=(w_*, w_{**})$. Clearly, $t(x)$ is well defined on $J$ and $t(x)$ satisfies the following Riccati equation on $J$ 
	\begin{equation}\label{r1}
		t'(x)=\tilde{c}_2(x)(1+t^2(x))+(\tilde{c}_1(x)-\tilde{c}_3(x))t(x).
	\end{equation}
	We introduce the change of variable $z(x)=\displaystyle\int\tilde{c}_2(x) dx.$ Let $x(z)$ be the inverse of $z(x).$ Define $h(z)=t(x(z))$. Hence, $z^*$ is a zero of $h(z)$ if and only if $x(z^*)$ is a zero of $t(x)$. Thus, $z^*$ is a zero of $h(z)$ if and only if $x(z^*)$ is a zero of $y(x)$. Moreover, inverse mapping theorem ensures that $\dfrac{{\rm d}x}{{\rm d}z}=\dfrac{1}{\tilde{c}_2(x)}$. Denote $\dfrac{{\rm d}h}{{\rm d}z}$ as $\dot{h}(z)$. Hence, \eqref{r1} becomes
	\begin{equation}\label{ricatti}
		\dot{h}(z)=1+h^2(z)-2r(z)h(z),
	\end{equation} where $r(z)=\dfrac{\tilde{c}_3(x(z))-\tilde{c}_1(x(z))}{2\tilde{c}_2(x(z))}.$ Let $z^*$ be a zero of $h(z)$. Integrating the above equation \eqref{ricatti} from $z^*$ to $z$, we get
	\begin{equation*}
		h(z)=\int_{z^*}^{z}(1+h^2(t)-2r(t)h(t)) dt.
	\end{equation*}
	Approximating the above integral by using the trapezoidal rule, we obtain
	\begin{align*}
		h(z)&\approx \frac{1}{2}(z-z^*)(2+h^2(z)-2r(z)h(z))\\
		z^*&\approx z-\dfrac{2h(z)}{2+h^2(z)-2r(z)h(z)}.
	\end{align*}
	Thus, if $z$ is an approximation for $z^*$, one can expect 
	\begin{equation}\label{g}
		G(z)=z-\dfrac{2h(z)}{2+h^2(z)-2r(z)h(z)}
	\end{equation}
	maybe a better approximation of $z^*$. Hence, we obtain a new iterative scheme $z_{n+1}=G(z_n)$, i.e.,
	\begin{equation} \label{iter}
		z_{n+1}=z_n-\dfrac{2h(z_n)}{2+h^2(z_n)-2r(z_n)h(z_n)}, \hspace{3mm} n=0,1,2,\dots
	\end{equation}
	for approximating the zero of $h(z)$.
	\begin{remark}
	\end{remark}
	
	\textbf{$\bullet$} It is easy to verify that $G(z^*)=z^*$, $\dot{G}(z^{*})=0$, $\ddot{G}(z^{*})=0$. Define $\epsilon_n=z_n-\alpha.$ Consequently,  $\epsilon_{n+1}=\dfrac{\dddot{G}(z^*)}{6}\epsilon_n^3+\mathcal{O}(\epsilon_n^4)$. Hence, the proposed iterative procedure \eqref{iter} is a third-order method.
	
	\textbf{$\bullet$} From \eqref{ricatti}, the Newton's method for finding the zeros of $h(z)$ can be expressed as
	\begin{equation}\label{iter1}
		z_{n+1}=z_n-\dfrac{h(z_n)}{1+h^2(z_n)-2r(z_n)h(z_n)}, \hspace{3mm} n=0,1,2,\dots.
	\end{equation}
	It is interesting to note that the proposed algorithm bears a resemblance to the well-known Newton's method; the proposed method exhibits a higher order of convergence than Newton's method. 

    The bound for the distance between the singular point and the zero plays a crucial role in providing initial guesses for zeros of $h(z)$. Using the Riccati Equation \ref{ricatti}, one can get these bounds \cite{2003}. Let $z_w$ and $z'_w$ be two consecutive singularities of $h(z)$. Equation (2.15) of \cite{2003} can be formally stated as
	\begin{lemma} \label{lemma2}
		Let $r(z) \neq 0$ in the interval $(z_w,z'_w)$ and $z^*$ be the unique zero of $h(z)$.The following statements are then true.
		\begin{enumerate}
			\item If $r(z)>0$ for all $z \in (z_w,z'_w)$, then $z_{*}-z_{w}<\frac{\pi}{2}$ and $z_w'-z_{*}>\frac{\pi}{2}$.
			\item If $r(z)<0$ for all $z \in (z_w,z'_w)$, then $z_{*}-z_{w}>\frac{\pi}{2}$ and $z_w'-z_{*}<\frac{\pi}{2}$.
		\end{enumerate}
	\end{lemma}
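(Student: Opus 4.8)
The plan is to linearise the Riccati equation \eqref{ricatti} by a Pr\"ufer-type phase substitution and then compare the resulting phase equation with the model case $r\equiv 0$, where $h(z)=\tan(z-z^*)$ has poles exactly at $z^*\pm\pi/2$ and both assertions hold with equality. On the open interval $(z_w,z'_w)$ the function $h$ has no singularities, so $\phi(z):=\arctan h(z)$ is a well-defined $C^1$ function taking values in $(-\pi/2,\pi/2)$, with $\phi(z^*)=0$. Writing $h=\tan\phi$ gives $\dot h=(1+h^2)\dot\phi$, and substituting into \eqref{ricatti} together with the identity $\dfrac{2h}{1+h^2}=\sin 2\phi$ yields the phase equation
\[
\dot\phi(z)=1-r(z)\sin\bigl(2\phi(z)\bigr),\qquad z\in(z_w,z'_w).
\]

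Next I would pin down the boundary values of $\phi$. Since $z_w$ and $z'_w$ are consecutive singularities of $h=\tilde y/\tilde w$ and the zeros of $\tilde w$ are simple and disjoint from those of $\tilde y$, $h$ has a simple pole at each endpoint, so $h(z)\to\pm\infty$ there and hence $\phi$ has a limit in $\{-\pi/2,\pi/2\}$ at each endpoint. From the phase equation, $\dot\phi\to 1$ as $z$ approaches either endpoint (because $\sin 2\phi\to\sin(\pm\pi)=0$ and $r$ is bounded), so $\phi$ is strictly increasing near each endpoint; since $\phi$ is confined to $(-\pi/2,\pi/2)$, this forces $\phi$ to extend continuously to $[z_w,z'_w]$ with $\phi(z_w)=-\pi/2$ and $\phi(z'_w)=\pi/2$. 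By continuity and the uniqueness of the zero $z^*$, $\phi$ maps $(z_w,z^*)$ into $(-\pi/2,0)$ and $(z^*,z'_w)$ into $(0,\pi/2)$; consequently $\sin 2\phi<0$ on the first subinterval and $\sin 2\phi>0$ on the second, vanishing only at the endpoints. Since $r$ is continuous on $I$, $\dot\phi$ also extends continuously to these closed subintervals.

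For the comparison, take case (1), $r>0$. The fundamental theorem of calculus gives
\[
\frac{\pi}{2}=\phi(z^*)-\phi(z_w)=\int_{z_w}^{z^*}\bigl(1-r(z)\sin 2\phi(z)\bigr)\,dz .
\]
The integrand is continuous and, on the open interval $(z_w,z^*)$, strictly larger than $1$ (there $r>0$ and $\sin 2\phi<0$), so the integral strictly exceeds $z^*-z_w$; hence $z^*-z_w<\pi/2$. Applying the same argument on $[z^*,z'_w]$, where $\sin 2\phi>0$ makes the integrand strictly less than $1$ on the open interval, gives $\pi/2=\int_{z^*}^{z'_w}\bigl(1-r\sin 2\phi\bigr)\,dz<z'_w-z^*$, i.e.\ $z'_w-z^*>\pi/2$. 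Case (2), $r<0$, is identical after reversing every inequality.

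The one genuinely delicate point is the second step: correctly identifying the boundary values $\mp\pi/2$ of the phase (equivalently, pinning down the sign of the residue of $h$ at each singularity) and verifying that $\phi$ and $\dot\phi$ extend continuously to the closed interval so that the fundamental theorem of calculus applies. Once that is settled, the phase substitution and the monotone comparison against the integrand $1$ are routine.
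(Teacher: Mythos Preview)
Your proof is correct. The paper does not give its own argument for this lemma but cites it as Equation~(2.15) of \cite{2003}; the derivation there is essentially the one you found: divide the Riccati equation \eqref{ricatti} by $1+h^2$ and integrate from a pole to the zero, which is precisely your phase substitution $\phi=\arctan h$ written in integral form, followed by the same sign comparison of the integrand against~$1$. Your handling of the boundary values $\phi(z_w)=-\pi/2$, $\phi(z'_w)=\pi/2$ via the observation that $\dot\phi\to1$ at the endpoints is a bit more explicit than the cited source, but the approaches are the same.
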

	The following lemma provides a similar bound when $r(z)$ has a unique zero $z_r$ in $(z_w,z'_w)$.
	\begin{lemma}\cite[Proposition 6.1]{2002}\label{lemma3}
		Let $r(z)$ be a non-increasing and has a unique zero $z_r$ in $(z_w,z'_w)$ and $z^*$ be the unique zero of $h(z)$. The following statements are then true.
		\begin{enumerate}
			\item If $z_r> z_*$, then $z_w'-z_r<\frac{\pi}{2}$.
			\item If $z_r<z_*$, then $z_r-z_w<\frac{\pi}{2}$.
		\end{enumerate}
	\end{lemma}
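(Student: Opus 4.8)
The statement to prove is Lemma~\ref{lemma3}, which is attributed to Proposition 6.1 of \cite{2002}. The plan is to exploit the Riccati equation~\eqref{ricatti} together with a comparison argument against the pure oscillator $\dot{u} = 1 + u^2$, whose solutions are tangent functions with period exactly $\pi$ in the relevant sense (i.e. $u(z) = \tan(z - c)$ blows up after advancing by $\pi$ and passes through zero after advancing by $\pi/2$).

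First I would set up the comparison. Rewrite~\eqref{ricatti} as $\dot{h} = (1 + h^2) - 2r(z)h$. On any subinterval where $r(z) \le 0$, we have $-2r(z)h \ge 0$ whenever $h \ge 0$ and $-2r(z)h \le 0$ whenever $h \le 0$; this sign pattern is exactly what lets one sandwich $h$ between shifted tangent branches. Concretely, suppose the zero $z_*$ satisfies $z_r < z_*$, where $z_r$ is the unique zero of $r$. Since $r$ is non-increasing and vanishes at $z_r$, we have $r(z) \ge 0$ on $(z_w, z_r)$ and $r(z) \le 0$ on $(z_r, z_w')$. On $(z_r, z_*)$, where $r \le 0$ and $h < 0$ (because $h$ is negative just left of its zero $z_*$ and has a singularity at $z_w$ where it tends to $-\infty$, one must track the sign carefully using the monotonic/convexity structure of $h$ between consecutive singularities), the term $-2r(z)h$ has a definite sign that forces $\dot{h}$ to be compared with $1 + h^2$. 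Integrating the differential inequality from $z_r$ to $z_*$ and using $\int \frac{dh}{1+h^2} = \arctan h$, the advance in the ``angle'' $\arctan h(z)$ from $z_r$ to $z_*$ is bounded, which yields $z_r - z_w < \frac{\pi}{2}$ after also controlling the piece from $z_w$ (where $\arctan h \to -\frac{\pi}{2}$) up to $z_r$. The symmetric case $z_r > z_*$ is handled the same way on the interval $(z_*, z_w')$ where now the relevant sign of $r$ and $h$ combine to bound $z_w' - z_r < \frac{\pi}{2}$.

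The cleaner route, and the one I would actually follow, is to introduce the Pr\"ufer-type angle $\theta(z)$ defined by $h(z) = \tan\theta(z)$ (or equivalently $\theta = \arctan h$ extended continuously across $\pm\infty$), so that $\theta$ increases by $\pi$ between consecutive singularities $z_w, z_w'$ of $h$, passing through an integer multiple of $\pi$ exactly at the zero $z_*$. Differentiating gives $\dot{\theta} = \cos^2\theta \cdot \dot{h} = \cos^2\theta(1 + h^2) - 2r\cos^2\theta\, h = 1 - 2r(z)\sin\theta\cos\theta = 1 - r(z)\sin 2\theta$. Now the whole problem reduces to an ODE for a single scalar $\theta$: we know $\theta(z_w^+) = -\frac{\pi}{2}$ (mod $\pi$), $\theta(z_*) \equiv 0$ (mod $\pi$), $\theta(z_w'^-) = \frac{\pi}{2}$ (mod $\pi$), and we must show that the $z$-length over which $\theta$ traverses certain quarter-periods is less than $\frac{\pi}{2}$. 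Since $\dot{\theta} = 1 - r(z)\sin 2\theta$, on any range of $z$ where $r(z) \sin 2\theta(z) \le 0$ we get $\dot\theta \ge 1$, hence $\theta$ advances at least as fast as the identity, so the $z$-interval needed for $\theta$ to cover an arc of length $\frac{\pi}{2}$ is at most $\frac{\pi}{2}$. The job is then to check that the sign hypotheses — $r$ non-increasing with its unique zero at $z_r$, placed relative to $z_*$ — force $r(z)\sin 2\theta(z)$ to have the favorable sign on precisely the sub-arc in question (e.g. for case (2), on $z \in (z_w, z_r)$ one checks $\theta \in (-\frac\pi2, 0)$ so $\sin 2\theta < 0$, while $r \ge 0$ there, giving $\dot\theta \ge 1$ and hence $z_r - z_w < \frac\pi2$).

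The main obstacle I anticipate is the sign bookkeeping: one must pin down on which sub-arcs $\sin 2\theta$ is positive versus negative, and this requires knowing the monotone behaviour of $\theta$ (equivalently, that $h$ has no spurious oscillation between $z_w$ and $z_w'$), which in turn needs $\dot\theta > 0$ globally on $(z_w, z_w')$ — and that positivity is itself only guaranteed where $r\sin 2\theta$ is not too large and positive. Breaking this potential circularity is the delicate point: one argues that at a first putative zero of $\dot\theta$ we would need $r(z)\sin 2\theta = 1 > 0$, but near such a point $\theta$ is still in the range inherited from the singularity, and the monotonicity of $r$ together with $r(z_r)=0$ restricts where $r > 0$, yielding a contradiction with $\theta$ being in the wrong quadrant for $\sin 2\theta$ to be positive there. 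Once $\dot\theta > 0$ throughout is established, the quarter-period length estimates follow from the comparison $\dot\theta \ge 1$ on the designated sub-arc, and both parts (1) and (2) drop out by the symmetry $z \mapsto$ reflection and $\theta \mapsto -\theta$.
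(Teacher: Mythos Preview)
The paper does not supply its own proof of this lemma; it simply cites Proposition~6.1 of \cite{2002}. So the comparison is against the original reference, whose argument is indeed the Pr\"ufer/phase approach you describe.

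Your Pr\"ufer route is correct and is essentially the standard argument. With $h=\tan\theta$ one gets $\dot\theta=1-r(z)\sin 2\theta$, and on the relevant sub-arc the signs line up: in case~(2), on $(z_w,z_r)$ one has $r\ge 0$ (since $r$ is non-increasing with $r(z_r)=0$) and $h<0$ (since $z_r<z_*$ and $h<0$ on $(z_w,z_*)$ by $\dot h(z_*)=1$), hence $\theta\in(-\tfrac{\pi}{2},0)$ and $\sin 2\theta<0$, giving $\dot\theta\ge 1$. Integrating from $z_w^+$ to $z_r$ yields $z_r-z_w\le \theta(z_r)+\tfrac{\pi}{2}<\tfrac{\pi}{2}$. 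Case~(1) is the mirror image on $(z_r,z_w')$.

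The ``circularity'' you flag in your final paragraph is not actually present for this lemma. You do not need global monotonicity of $\theta$ on all of $(z_w,z_w')$; you only need it on the sub-arc $(z_w,z_r)$ (respectively $(z_r,z_w')$), and there the range of $\theta$ is pinned down \emph{a priori} by the sign of $h$, which is known independently from the fact that $z_*$ is the unique zero with $\dot h(z_*)=1$. So the inequality $\dot\theta\ge 1$ on that sub-arc follows directly, with no bootstrap needed. Your first-putative-zero argument is therefore unnecessary machinery here, though it would be relevant if one wanted monotonicity on the full interval without an $|r|<1$ assumption.
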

The distance between consecutive zeros also plays a vital role in providing initial guesses. If the zeros of $y(x)$ have the property of being either convex or concave \cite{convex}, one can obtain improved initial guesses. The following well-known "Sturm convexity theorem" is used in \cite{2003} for obtaining improved initial guesses.
\begin{theorem}\cite[p. 318]{BR89}\label{sturm_lemma}
Consider the differential equation $u''(x)+\Omega(x)u(x)=0$ where $\Omega(x)$ is an increasing continuous function. Then $x_n-x_{n-1} < x_{n+1}-x_n$, where $x_n$ is the sequence of successive zeros of a nontrivial solution of the differential equation.
\end{theorem}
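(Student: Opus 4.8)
The plan is to deduce the inequality from the classical Sturm comparison theorem together with a horizontal translation of the solution. Let $u$ be a nontrivial solution of $u''+\Omega u=0$ and let $x_{n-1}<x_n<x_{n+1}$ be three consecutive zeros of $u$; put $d:=x_n-x_{n-1}>0$. Since $\Omega$ is continuous, every zero of $u$ is simple (a double zero would force $u\equiv 0$ by uniqueness), so $u'$ vanishes at no $x_k$. The guiding idea is to slide $u$ rigidly so that its zero at $x_{n-1}$ is carried onto $x_n$, and then to compare this translated copy with $u$ itself on the adjacent gap $(x_n,x_{n+1})$: the comparison should show that the translated copy oscillates strictly faster there, which in turn bounds the length of the next gap.

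Concretely, I would set $v(x):=u(x-d)$, a nontrivial solution of $v''(x)+\Omega(x-d)v(x)=0$ on a neighbourhood of $[x_n,x_{n+1}]$ (the shifted argument $x-d$ stays within the interval on which $\Omega$ is defined, which is a routine check). The zeros of $v$ are exactly the points $x_k+d$; in particular $v(x_n)=u(x_{n-1})=0$, and because $u$ has no zero strictly between $x_{n-1}$ and $x_n$, the first zero of $v$ lying to the right of $x_n$ is $x_n+d$. I would then invoke the monotonicity of $\Omega$: since $x-d<x$, the two coefficients $\Omega(x-d)$ and $\Omega(x)$ are comparable on $(x_n,x_{n+1})$, strictly on a subinterval because $\Omega$ is strictly monotone. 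Hence one of the two equations (for $u$, for $v$) carries the pointwise larger coefficient on $(x_n,x_{n+1})$, and the Sturm comparison theorem, applied on $[x_n,x_{n+1}]$ whose endpoints are consecutive zeros of $u$, forces $v$ to vanish at some point of the open interval $(x_n,x_{n+1})$. Since the first zero of $v$ to the right of $x_n$ is $x_n+d$, this gives $x_n+d<x_{n+1}$, that is $x_n-x_{n-1}=d<x_{n+1}-x_n$, which is the assertion. The choice of translation direction---equivalently, which of the two coefficients turns out to be the larger---is exactly where the monotonicity hypothesis on $\Omega$ is used; reversing the monotonicity reverses the inequality between consecutive gaps.

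The only substantive ingredient is the Sturm comparison theorem itself, which I would take as classical, so I do not expect a serious obstacle beyond careful bookkeeping. If a self-contained argument were desired, the work would be to establish that comparison: assuming for contradiction that $v$ has no zero in $(x_n,x_{n+1})$, normalise the signs so that $u>0$ and $v>0$ on $(x_n,x_{n+1})$ and consider the Wronskian $W:=uv'-u'v$. A direct computation gives $W'=(\Omega(x)-\Omega(x-d))uv$, which has a fixed sign on $(x_n,x_{n+1})$ by the monotonicity of $\Omega$; on the other hand $W(x_n)=0$ while $W(x_{n+1})=-u'(x_{n+1})v(x_{n+1})$ with $u'(x_{n+1})\neq 0$ and $v(x_{n+1})\geq 0$, so the boundary values of $W$ are incompatible with that fixed sign of $W'$, a contradiction. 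Apart from this, the only points needing a line of care are the domain check for $\Omega(x-d)$ noted above and the fact that strict monotonicity of $\Omega$ is precisely what upgrades the conclusion from $\leq$ to a strict $<$ between consecutive gaps.
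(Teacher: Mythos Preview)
The paper does not prove this theorem; it is quoted from Birkhoff--Rota and used as a black box. Your translation-plus-Sturm-comparison argument is exactly the classical proof found in that reference, so methodologically you are on target.

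There is, however, a direction error in your execution, and it happens to coincide with a misprint in the paper's statement. With $\Omega$ increasing and $d=x_n-x_{n-1}>0$ you have $\Omega(x-d)<\Omega(x)$ on $(x_n,x_{n+1})$, so it is $u$, not $v$, that carries the larger coefficient. Sturm's comparison theorem then guarantees a zero of the faster oscillator $u$ strictly between consecutive zeros of the slower one $v$, not the reverse. Applying it on $[x_n,\,x_n+d]$ (consecutive zeros of $v$) forces $x_{n+1}\in(x_n,x_n+d)$, i.e.\ $x_{n+1}-x_n<x_n-x_{n-1}$. So $\Omega$ increasing yields \emph{shrinking} gaps, the opposite of the displayed inequality. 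That this is the intended direction is confirmed by the paper's own usage in Remark~\ref{remark5}, where $\dot\Omega>0$ is paired with $z_{y_{j+1}}-z_{y_j}>z_{y_{j+2}}-z_{y_{j+1}}$. Your hedging phrase ``one of the two equations carries the larger coefficient'' papers over precisely this point; once you commit to which one, the Wronskian argument you sketch goes through cleanly and delivers the corrected inequality.
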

It is worth mentioning that if $\Omega(x)$ is decreasing, then the inequality in the above theorem will be reversed. We will conclude this section by presenting the following remark.
\begin{remark}
    Using a suitable nonzero function $d(z)$, it was shown in \cite[Eq. 2.11]{2003} that the function $\overline{y}(z)=d(z)\tilde{y}(z)$ satisfies the following second-order differential equation $\ddot{\overline{y}}(z)+\Omega(z)\overline{y}=0,$ where $\Omega(z)=1+\dot{r}(z)-r^2(z)$. Note that the zeros of $y(z)$, $\tilde{y}(z)$ and $\overline{y}(z)$ are same.
\end{remark}

	\section{Convergence analysis of the proposed scheme}
	This section discusses the global convergence behaviour of the proposed third-order iterative scheme. Throughout this section, assume that $z_w<z_w'$ are two successive zeros of $\tilde{w}(x(z))$. Let $z_{*}$ denotes the unique zero of $\tilde{y}(x(z))$ in $J'=(z_w, z_w')$. We assume that $r(z)$ is a differentiable function on $J'$. Based on the behaviour of the function $r(z)$, the global convergence property of the proposed iterative method \eqref{iter} has been established in the interval $J'$. More specifically, under suitable assumptions on $r(z)$, the sequence $(z_n)$, $n=0,1,2,\dots$ generated by the proposed scheme \eqref{iter} converges monotonically to $z_*$ in $J'$. Using the convergence results, efficient algorithms have been proposed to find all the zeros of $\tilde{y}(x(z))$ in a given interval $[a, b] \subseteq (z_{w_j}, z_{w_k})$, where $z_{w_j}<z_{w_k}$ are zeros of $\tilde{w}(x(z))$. The following remark provides an interesting property of the function $h(z)$ in $J'$, which plays a crucial role in the convergence analysis.
	\begin{remark}\label{signh}
		The property $\dot{h}(z_*)=1,$ guarantee that $h(z)<0\hspace{2mm}\forall\hspace{2mm}z \in (z_{w}, z_*)$ and $h(z)>0\hspace{2mm}\forall\hspace{2mm}z \in (z_*, z_w')$. 
	\end{remark} 
	The following theorem provides sufficient conditions for the monotone convergence of the proposed scheme in the interval $J'$.
	\begin{theorem}\label{thm2}
		Let $r(z)$ be a non-increasing function on $J'$.
		\begin{enumerate}
			\item If $r(z)$ is positive in $(z_w, z_*)$, then the sequence $(z_n)$ generated by the proposed iterative method \eqref{iter} converges monotonically to $z_*$ for any initial guess $z_0 \in (z_w, z_*)$. 
			\item If $r(z)$ is negative in $(z_*, z_w')$, then the sequence $(z_n)$ generated by the proposed iterative method \eqref{iter} converges monotonically to $z_*$ for any initial guess $z_0 \in (z_*, z_w')$.
		\end{enumerate}
	\end{theorem}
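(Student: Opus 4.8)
The plan is to analyze the map $G(z)$ defined in \eqref{g} on the interval $(z_w, z_*)$ (case 1; case 2 is symmetric via the substitution $z\mapsto -z$, which flips the sign of $r$ and reverses monotonicity, so I will only treat case 1 in detail). The strategy is the classical monotone-iteration argument: I will show that on $(z_w, z_*)$ one has (a) $z < G(z) < z_*$, so the iterates stay in the interval and increase, and (b) $G$ maps into $(z_w,z_*]$ with $z_*$ as the only fixed point there; monotone boundedness then forces convergence to $z_*$. By Remark \ref{signh}, $h(z)<0$ on $(z_w,z_*)$ and $h(z_*)=0$ with $\dot h(z_*)=1>0$; combined with $r(z)>0$ this is the sign information that should drive everything.

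The first key step is to control the denominator $D(z):=2+h^2(z)-2r(z)h(z)$ in \eqref{g}. On $(z_w,z_*)$ we have $h(z)<0$ and $r(z)>0$, so $-2r(z)h(z)>0$, whence $D(z) > 2+h^2(z) > 0$: the denominator is strictly positive and bounded below by $2$, so $G$ is well defined and the correction term $-2h(z)/D(z)$ has the sign of $-h(z)>0$. This immediately gives $G(z) > z$ on $(z_w,z_*)$, i.e. the map pushes points to the right. The second key step is the upper bound $G(z) < z_*$. For this I would use the integral identity $h(z) = \int_{z}^{z_*}\!\bigl(2 - \dot h(t)\bigr)\,dt$ — equivalently $-h(z) = \int_z^{z_*}(1-h^2(t)+2r(t)h(t))\,dt$ rewritten using \eqref{ricatti} — no wait, more cleanly: integrating \eqref{ricatti} from $z$ to $z_*$ gives $-h(z) = \int_z^{z_*}\bigl(1+h^2(t)-2r(t)h(t)\bigr)\,dt$. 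The trapezoidal approximation underlying \eqref{iter} replaces the integrand by its right-endpoint value (the integrand vanishes... no, it does not vanish at $z_*$; at $z_*$ the integrand is $\dot h(z_*)=1$). So what I actually need is the comparison
\[
\frac{2\,(-h(z))}{D(z)} \;<\; z_* - z,
\]
i.e. $2(-h(z)) < (z_*-z)\,D(z)$. Writing $-h(z)=\int_z^{z_*}\dot h$(with a sign) and $D(z)=2+h^2(z)-2r(z)h(z)$, this should follow from a convexity/monotonicity property of the integrand of \eqref{ricatti} along the solution curve, using that $r$ is non-increasing and $h$ is increasing through the zero (since $\dot h = 1 + h^2 - 2rh > 0$ wherever $h\le 0$ and $r\ge 0$). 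I expect this comparison — showing the trapezoidal quadrature systematically under- or over-estimates the integral in the right direction — to be the main obstacle, and the non-increasing hypothesis on $r$ is presumably exactly what makes the error term have a definite sign.

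Once (a) $z<G(z)$ and (b) $G(z)<z_*$ are established for all $z\in(z_w,z_*)$, the argument closes quickly: starting from $z_0\in(z_w,z_*)$, the sequence $(z_n)$ is strictly increasing and bounded above by $z_*$, hence converges to some $\ell\le z_*$; by continuity of $G$ (guaranteed since $D>0$ and $h$ is smooth on $J'$, which includes a neighborhood of $\ell$ as long as $\ell<z_w'$, and indeed $\ell\le z_*<z_w'$), $\ell$ is a fixed point of $G$; but $G(z)=z$ on $(z_w,z_*]$ forces $h(\ell)=0$, and $z_*$ is the unique zero of $h$ in $J'$, so $\ell=z_*$. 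For case 2, I would either repeat the symmetric estimates (now $h(z)>0$, $r(z)<0$ on $(z_*,z_w')$, so again $-2r(z)h(z)>0$, $D(z)>2$, and the correction $-2h(z)/D(z)<0$ pushes iterates left toward $z_*$), or reduce to case 1 by the reflection $z\mapsto 2z_* - z$ under which \eqref{ricatti} transforms into an equation of the same form with $r$ replaced by $-r(2z_*-\cdot)$, which is non-increasing and positive on the reflected interval. I would present case 1 in full and remark that case 2 is entirely analogous.
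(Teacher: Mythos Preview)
Your overall framework is exactly the paper's: show on $(z_w,z_*)$ that $G$ is well defined, that $z<G(z)$, and that $G(z)\le z_*$, then close with a monotone convergence argument. Your handling of well-definedness, the inequality $G(z)>z$, the passage from (a)+(b) to convergence, and the reduction of case~(2) to case~(1) by reflection are all correct.

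The gap is precisely at the step you yourself flag as ``the main obstacle'': the bound $G(z)<z_*$. Your plan is to reinterpret it as a trapezoidal-rule error inequality for $\int_z^{z_*}\dot h$ and then argue via convexity or monotonicity of the integrand. But the comparison you need, $2\int_z^{z_*}\dot h \le (z_*-z)\bigl(\dot h(z)+\dot h(z_*)\bigr)$, is the statement that the trapezoidal rule \emph{over}estimates, and the standard sufficient condition for that is convexity of $\dot h$, i.e.\ $\dddot h\ge 0$. Differentiating \eqref{ricatti} twice brings in $\ddot r$, about which the hypotheses say nothing (only $\dot r\le 0$ is assumed), so this route does not close without an extra, unavailable, assumption on $r$.

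The paper bypasses the integral comparison entirely. It computes $\dot G$ directly:
\[
\dot G(z)=\frac{3h^4(z)+2h^2(z)+4r^2(z)h^2(z)-4\dot r(z)h^2(z)-8r(z)h^3(z)}{\bigl(2+h^2(z)-2r(z)h(z)\bigr)^2},
\]
and observes that on $(z_w,z_*)$ every term in the numerator is nonnegative (the only two terms whose sign is not automatic are $-4\dot r\,h^2\ge 0$ since $\dot r\le 0$, and $-8rh^3\ge 0$ since $r>0$, $h<0$). Thus $\dot G\ge 0$ on $(z_w,z_*)$, and since $G(z_*)=z_*$, the mean value theorem gives $z_*-G(z_0)=\dot G(c)(z_*-z_0)\ge 0$, i.e.\ $G(z_0)\le z_*$. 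This one explicit derivative computation replaces the integral-error analysis you were contemplating and uses only the first-derivative hypothesis on $r$. I would rewrite your step (b) this way.
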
	
	
	\begin{proof}
		From Remark \ref{signh}, it is clear that $r(z)h(z)<0 \hspace{3mm}\forall\hspace{2mm}z \in (z_w, z_{*})$. Hence the function $2+h^2(z)-2r(z)h(z)>0 \hspace{3mm}\forall\hspace{2mm} z \in (z_w, z_{*})$. Consequently, the scheme (\ref{iter}) is well-defined $\forall z \in (z_w, z_{*})$. Consider $z_0 \in (z_w, z_{*})$, $$z_1=G(z_0)=z_0-\dfrac{2h(z_0)}{2+h^2(z_0)-2r(z_0)h(z_0)}$$ which is well-defined, and it is easy to verify that $z_0<z_1$. By using \eqref{g}, we observe that
		\begin{equation}\label{g'}
			\dot{G}(z)=\dfrac{3h^4(z)+2h^2(z)+4r^2(z)h^2(z)-4\dot{r}(z)h^2(z)-8r(z)h^3(z)}{(2+h^2(z)-2r(z)h(z))^2}
		\end{equation} is non-negative in $(z_{w}, z_{*}).$
		Thus,
		\begin{equation*}
			z_{*}-z_1=G(z_{*})-G(z_0)=\dot{G}(c)(z_{*}-z_0)>0, \hspace{2mm}\text{for some}\hspace{2mm}c \in (z_0, z_{*}).
		\end{equation*}
		Consequently, $z_1<z_{*}$. Consider this conclusion is valid for $k=1,2,\dots,n$. i.e., $z_1,z_2,\dots,z_n$ exist and $z_{w}<z_0<z_1<\dots<z_n<z_{*}$. Now,  $z_{n+1}=G(z_n)=z_n-\dfrac{2h(z_n)}{2+h^2(z_n)-2r(z_n)h(z_n)}$ is well-defined and it is easy to verify that $z_n<z_{n+1}$.
		\begin{equation*}
			z_{*}-z_{n+1}=G(z_{*})-G(z_n)=\dot{G}(c)(z_{*}-z_n)>0, \hspace{2mm}\text{for some}\hspace{2mm}c \in (z_n,z_{*}).
		\end{equation*}
		Consequently, $z_{n+1}<z_{*}$. Thus, the sequence $(z_n)$ generated by the scheme (\ref{iter}) is monotone increasing and bounded above by $z_{*}$. By using \eqref{iter}, it is easy to verify that $(z_n)$ converges to the zero $z_{*}$ of $h(z)$ in $J'$. The proof for the second part of the theorem follows similarly.
	\end{proof}
	Note that the above theorem does not guarantee the convergence of the sequence if $r(z)\geq 0$ and $z_0 \in (z_*, z_w')$. Similarly, there is no information regarding the convergence if $r(z)\leq 0$ and $z_0 \in (z_w, z_*)$. We can answer this question favourably with additional assumptions on $r(z)$. The following lemma plays a crucial role in this direction.
	
	\begin{lemma}\label{l1}
		Let $r(z)$ be non-increasing in the interval $J'$.
		\begin{enumerate}
			\item If $0<r(z)<1$ in $(z_*, z_w')$, then $\dot{G}(z)\geq0$ $\hspace{2mm}\forall\hspace{2mm}z \in (z_*, z_w')$.
			\item If $-1<r(z)<0$ in $(z_w, z_*)$, then $\dot{G}(z)\geq0$ $\hspace{2mm}\forall\hspace{2mm}z \in (z_w, z_*)$.
		\end{enumerate}
	\end{lemma}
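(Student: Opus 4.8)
The plan is to argue directly from the closed form \eqref{g'} for $\dot G(z)$. Its denominator $\bigl(2+h^2(z)-2r(z)h(z)\bigr)^2$ is a perfect square, so once we check it is nonzero it contributes nothing to the sign, and the whole matter reduces to showing that the numerator
\[
N(z)=3h^4(z)+2h^2(z)+4r^2(z)h^2(z)-4\dot r(z)h^2(z)-8r(z)h^3(z)
\]
is non-negative on the relevant subinterval. I would begin by pulling out the common factor $h^2(z)\ge 0$, writing $N(z)=h^2(z)\,Q(z)$ with
\[
Q(z)=3h^2(z)-8r(z)h(z)+2+4r^2(z)-4\dot r(z),
\]
so that it suffices to prove $Q(z)\ge 0$.

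For case (1), on $(z_*,z_w')$ the hypothesis that $r$ is non-increasing gives $\dot r(z)\le 0$, hence $-4\dot r(z)\ge 0$, and it is enough to show the smaller quantity $3h^2-8rh+(2+4r^2)$ is positive. Regarded as a quadratic in $h$ with positive leading coefficient, its minimum over all real $h$ is attained at $h=\tfrac{4}{3}r$ and equals $2-\tfrac{4}{3}r^2$, which is positive precisely when $r^2<\tfrac32$. Since $0<r(z)<1$ on $(z_*,z_w')$ we have $r^2(z)<1<\tfrac32$ there, so $Q(z)>0$ and therefore $N(z)\ge 0$. It then remains to see the denominator never vanishes: $2+h^2-2rh$ is a quadratic in $h$ with discriminant $4r^2-8$, which is negative when $r^2<2$, so it is strictly positive throughout. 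Combining, $\dot G(z)\ge 0$ on $(z_*,z_w')$. Case (2) is identical after replacing the interval by $(z_w,z_*)$: the only place a bound on $r$ entered was through $r^2<1$ (and $r^2<2$ for the denominator), and $-1<r(z)<0$ supplies the same inequalities, so $\dot G(z)\ge 0$ there as well.

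I expect no serious obstacle here — the computation is routine once one notices the factorisation $N=h^2Q$ and completes the square in $h$. The one point worth care is that the argument should \emph{not} rely on the sign of $h(z)$ to control the cross term $-8rh$; completing the square in $h$ and invoking $r^2<1$ uniformly handles it cleanly, and incidentally shows the conclusion survives the weaker hypothesis $r^2<\tfrac32$, the stated bound $|r|<1$ being a convenient and symmetric choice consistent with the bounds appearing elsewhere in the analysis.
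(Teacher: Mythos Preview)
Your argument is correct and is in fact cleaner than the paper's. Both proofs begin the same way: use $\dot r\le 0$ to drop the $-4\dot r(z)h^2(z)$ term and factor out $h^2(z)$, reducing to the positivity of $3h^2-8rh+(2+4r^2)$. At this point the paper takes an unnecessarily elaborate route: it partitions $(z_*,z_w')$ into dyadic bands $S_k=r^{-1}\bigl((1-2^{1-k},\,1-2^{-k}]\bigr)$, on each band replaces $r$ in the linear term by its upper endpoint and $r^2$ in the constant term by the square of the lower endpoint (tacitly using $h>0$ from Remark~\ref{signh}), and then checks that each resulting quadratic $\mathrm f_k(h)$ has non-positive discriminant. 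You instead keep $r$ as a parameter and compute the discriminant in $h$ directly, obtaining $16r^2-24<0$ whenever $r^2<\tfrac32$; this single inequality replaces the entire family $\{\mathrm f_k\}$ and the case split $k=2$ versus $k\neq 2$. Your version also avoids any appeal to the sign of $h$, which the paper's bounding step needs, and makes transparent that the hypothesis $|r|<1$ could be relaxed to $|r|<\sqrt{3/2}$ in this lemma alone. The trade-off is nil: the dyadic decomposition buys nothing extra, so your proof is strictly preferable here.
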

	\begin{proof}
		\textit{(1).}	For $k \in \mathbb{N}$, define $S_k=(z_*, z_w')\cap r^{-1}\left(\left(\dfrac{2^{k-1}-1}{2^{k-1}}, \dfrac{2^k-1}{2^k}\right]\right)$. Note that $\cup_{k\in \mathbb{N}}S_k=(z_*, z_w')$. Let $z \in (z_*, z_w')$. Then, there exist $k \in \mathbb{N}$, such that $\dfrac{2^{k-1}-1}{2^{k-1}}<r(z)\leq\dfrac{2^k-1}{2^k}.$ Note that \text{sign}($\dot{G}(z)$)=\text{sign}($g(z)$), where $g(z)=3h^4(z)+2h^2(z)+4r^2(z)h^2(z)-4\dot{r}(z)h^2(z)-8r(z)h^3(z)$. Now, note that 
		\begin{align*}
			g(z)&\geq 3h^4(z)+2h^2(z)+4r^2(z)h^2(z)-8r(z)h^3(z)\geq h^2(z)\mathrm{f}_k(h),
		\end{align*}
		where $\mathrm{f}_k(h)=3h^2-8\left(\dfrac{2^k-1}{2^k}\right)h+\left(2+4\left(\dfrac{2^{k-1}-1}{2^{k-1}}\right)^2\right).$ Now, we will evaluate the discriminant $D$ of $\mathrm{f}_k(h)$. Note that
		\begin{align*}
			D&=64\left(\dfrac{2^k-1}{2^k}\right)^2-12\left(2+4\left(\dfrac{2^{k-1}-1}{2^{k-1}}\right)^2\right)=-\dfrac{8}{2^{2k}}\left(2^{2k}-8\cdot2^{k}+16\right)=-\dfrac{8}{2^{2k}}(2^k-4)^2\leq0.
		\end{align*}
		Thus, for $k \neq 2$, $\mathrm{f}_k(h)>0$ $\hspace{2mm}\forall\hspace{2mm}z \in (z_*, z_w')$. Consequently, for $k \neq 2$, $\dot{G}(z)\geq0 \hspace{2mm}\forall\hspace{2mm}z \in(z_*, z_w')$. For the case $k=2$, we have $\mathrm{f}_2(h)=3h^2-6h+3=3(h-1)^2\geq 0$. Hence, in this case also $\dot{G}(z)\geq0.$ Thus, $\hspace{2mm}\forall\hspace{2mm} z \in (z_*, z_w'),$ $\dot{G}(z)\geq0$.
		
		\textit{(2).} The proof is similar to the previous part. Express $(z_w, z_*)=\cup_{k\in \mathbb{N}}S'_k$, where $S'_k=(z_w, z_*)\cap r^{-1}\left(\left[-\dfrac{2^k-1}{2^k}, -\dfrac{2^{k-1}-1}{2^{k-1}}\right)\right)$ and proceed like part \textit{(1)} one can get the conclusion.
	\end{proof}
	
	Using Lemma \ref{l1}, now we will prove the convergence of the iterative scheme for the cases $r(z)$ is positive in $(z_*, z_w')$ and $r(z)$ is negative in $(z_w, z_*)$.
	\begin{theorem}\label{thm5}
		Let $r(z)$ be a non-increasing function in $J'$.
		\begin{enumerate}
			\item If $0<r(z)<1$ in $(z_*, z_w')$, then the sequence $(z_n)$ generated by the proposed iterative method \eqref{iter} converges monotonically to $z_*$ for any initial guess $z_0 \in (z_*, z_w')$.
			\item If $-1<r(z)<0$ in $(z_w, z_*)$, then the sequence $(z_n)$ generated by the proposed iterative method \eqref{iter} converges monotonically to $z_*$ for any initial guess $z_0 \in (z_w, z_*)$.
		\end{enumerate}
	\end{theorem}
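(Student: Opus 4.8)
The plan is to reproduce the argument of Theorem \ref{thm2} almost verbatim, the only change being that the inequality $\dot{G}(z)\ge 0$ — which in Theorem \ref{thm2} came for free from inspecting \eqref{g'} because there $r(z)h(z)<0$ — is now supplied instead by Lemma \ref{l1}, whose hypotheses $0<r(z)<1$ on $(z_*,z_w')$ (resp.\ $-1<r(z)<0$ on $(z_w,z_*)$) are exactly what we assume. I describe part (1); part (2) is the mirror image.

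First I would check well-definedness of \eqref{iter} on $(z_*,z_w')$. By Remark \ref{signh}, $h(z)>0$ there, so together with $0<r(z)<1$ one gets $2+h^2(z)-2r(z)h(z)>2+h^2(z)-2h(z)=1+(h(z)-1)^2\ge 1>0$; thus $G$ from \eqref{g} is well defined and smooth on $(z_*,z_w')$, with denominator bounded below by $1$ (and equal to $2$ at $z_*$). Now fix $z_0\in(z_*,z_w')$. Since $h(z_0)>0$ and the denominator is positive, \eqref{iter} gives $z_1=G(z_0)<z_0$. For the lower bound, apply the mean value theorem to $G$ on $[z_*,z_0]$: there is $c\in(z_*,z_0)\subset(z_*,z_w')$ with $z_1-z_*=G(z_0)-G(z_*)=\dot{G}(c)(z_0-z_*)$, and Lemma \ref{l1}(1) gives $\dot{G}(c)\ge 0$, hence $z_1\ge z_*$ (if $z_1=z_*$ the iteration has already converged). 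So $z_*<z_1<z_0$, and by induction — each new iterate again lies in $(z_*,z_w')$, so well-definedness and Lemma \ref{l1}(1) keep applying — one obtains $z_*<z_{n+1}<z_n$ for all $n$.

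It then remains to identify the limit. The sequence $(z_n)$ is monotone decreasing and bounded below by $z_*$, hence converges to some $L\in[z_*,z_0]$. Since $G$ is continuous on the compact set $[z_*,z_0]$ (the denominator is $\ge 1$ there and $h,r$ are continuous), letting $n\to\infty$ in $z_{n+1}=G(z_n)$ gives $L=G(L)$, i.e.\ $2h(L)/(2+h^2(L)-2r(L)h(L))=0$, so $h(L)=0$; as $z_*$ is the unique zero of $h$ in $J'$, we conclude $L=z_*$. For part (2) one argues identically on $(z_w,z_*)$: there $h(z)<0$ (Remark \ref{signh}) and $-1<r(z)<0$, so $2+h^2(z)-2r(z)h(z)>2+h^2(z)+2h(z)=1+(h(z)+1)^2\ge 1>0$, the iteration is well defined, $z_1=G(z_0)>z_0$, Lemma \ref{l1}(2) gives $\dot{G}\ge 0$, the sequence increases and is bounded above by $z_*$, and the same limit argument forces convergence to $z_*$.

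The genuine difficulty of this theorem has already been discharged in Lemma \ref{l1}: when $r(z)$ and $h(z)$ have the same sign, the term $-8r(z)h^3(z)$ in the numerator of \eqref{g'} is negative, so $\dot{G}\ge 0$ is no longer obvious and needs the dyadic decomposition and discriminant computation carried out there. Given that, the only point in the present proof needing a little care is making sure every iterate stays inside the open interval $(z_*,z_w')$ (resp.\ $(z_w,z_*)$) so that Lemma \ref{l1} is legitimately applicable — which is precisely what the induction $z_*<z_{n+1}<z_n<\cdots<z_0$ guarantees — together with the routine completion of the square showing the denominator never vanishes.
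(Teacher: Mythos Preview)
Your proof is correct and follows essentially the same route as the paper: both establish well-definedness on $(z_*,z_w')$ via the completion-of-the-square bound $2+h^2-2rh>1+(h-1)^2>0$, then combine the mean value theorem with Lemma~\ref{l1} to trap each iterate in $(z_*,z_{n-1})$, and finally pass to the limit to identify the fixed point as $z_*$. Your write-up is in fact slightly more careful than the paper's in spelling out the continuity argument for the limit and in flagging that each iterate must stay inside $(z_*,z_w')$ for Lemma~\ref{l1} to keep applying.
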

	\begin{proof}
		First, we will prove Part \textit{(1)}. Observe that $h(z)>0$ and $-2r(z)h(z)>-2h(z)$ for all $z \in (z_{*}, z_w')$. Hence,
		\begin{equation*}
			2+h^2(z)-2r(z)h(z)>1+(1-h(z))^2>0, \quad \forall z \in (z_{*}, z_w').
		\end{equation*} 
		Consider $z_0 \in (z_{*}, z_w')$, then $z_1=G(z_0)=z_0-\dfrac{2h(z_0)}{2+h^2(z_0)-2r(z_0)h(z_0)}$ is well-defined and $z_1<z_0$. Note that
		\begin{equation*}
			z_{*}-z_1=G(z_{*})-G(z_0)=\dot{G}(c)(z_{*}-z_0) \quad \text{for some} \quad c\in (z_{*}, z_0).
		\end{equation*}	
		Using Lemma \ref{l1}, one can conclude that $z_*-z_1<0$. Hence, $z_*<z_1<z_0$. Consider this conclusion is valid for $k=1,2,\dots,n$. i.e., $z_1,z_2,\dots,z_n$ exist and satisfy $z_{*}<z_n<z_{n-1}<\dots<z_0<z_w'$. Clearly, $z_{n+1}=G(z_n)$ is well-defined and $z_{n+1}<z_n$. Note that
		\begin{equation*}
			z_{*}-z_{n+1}=G(z_{*})-G(z_n)=\dot{G}(c)(z_{*}-z_n) \quad \text{for some} \quad c \in (z_{*}, z_n).
		\end{equation*}
		Consequently, $z_{*}<z_{n+1}$. Hence, the sequence $(z_n)$ obtained from the scheme (\ref{iter}) is monotonically decreasing and bounded below by $z_{*}$. Hence, $(z_n)$ will converge to a fixed point $z_{*}$ of $G(z)$.
		
		For Part \textit{(2)}, one can check that
		\begin{equation*}
			2+h^2(z)-2r(z)h(z)>2+h^2(z)+2h(z)=1+(1+h(z))^2>0 \quad \forall z \in (z_w, z_{*}).
		\end{equation*}
		Hence, the iterative method is well-defined in the interval $(z_w, z_{*})$. The proof for part \textit{(2)} is analogous to part \textit{(1)}. Hence, it is left out.
	\end{proof}	
	By clubbing the conclusions from Theorem \ref{thm2} and Theorem \ref{thm5}, we can get the global convergence property of the proposed iterative method in the interval $J'$, when $\dot{r}(z)\leq 0$. This observation is stated as a remark.

	\begin{remark}\label{remark3}
		Let $r(z)\neq0$ and non-increasing in $J'$. If  $\left|r(z)\right|<1$, then the sequence $(z_n)$ generated by the proposed iterative method \ref{iter} converges monotonically to $z_*$ for any initial guess $z_0 \in J'$.
	\end{remark}
	The hypotheses of Theorem \ref{thm2} and Theorem \ref{thm5} are carefully assuming that the function $r(z)$ does not have a zero in $J'$. The following theorem addresses this issue.
	\begin{theorem}\label{theorem4}
		Let $r(z)$ be non-increasing and $\left|r(z)\right|<1$ in $J'$. Let $z_r$ be a unique zero of $r(z)$ in $J'$. Then, the sequence $(z_n)$ generated by the proposed iterative method \ref{iter} converges monotonically to $z_*$ for any initial guess $z_0$ between $z_r$ and $z_*$. More specifically, in this case, $z_r$ is a suitable initial guess for the proposed iterative procedure.
	\end{theorem}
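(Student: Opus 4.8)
The plan is to split according to whether $z_r$ lies to the left or to the right of $z_*$, and in each case to rerun the monotonicity-plus-boundedness arguments of Theorems \ref{thm2} and \ref{thm5}, invoking Lemma \ref{l1} on the sub-interval lying between $z_r$ and $z_*$. First observe that, since $r(z)$ is non-increasing on $J'$ with its only zero at $z_r$, it is positive on $(z_w, z_r)$ and negative on $(z_r, z_w')$; together with $|r(z)|<1$ this gives $0<r(z)<1$ on $(z_w,z_r)$ and $-1<r(z)<0$ on $(z_r,z_w')$. If $z_r=z_*$ the statement is vacuous, so assume $z_r\neq z_*$.

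Consider first $z_r<z_*$, so the admissible initial guesses lie in $[z_r,z_*)$. On $(z_r,z_*)$ we have $-1<r(z)<0$ while, by Remark \ref{signh}, $h(z)<0$; hence $2+h^2(z)-2r(z)h(z)>2+h^2(z)+2h(z)=1+(1+h(z))^2>0$ exactly as in the proof of Theorem \ref{thm5}(2), so the scheme \eqref{iter} is well defined there (at $z_0=z_r$ one has $r(z_0)=0$ and the denominator equals $2+h^2(z_0)>0$). The argument establishing $\dot{G}\geq 0$ in Lemma \ref{l1}(2) is purely pointwise — for a given $z$ it uses only the value $r(z)\in(-1,0)$, the inequality $\dot{r}(z)\leq 0$, and the sign of $h(z)$ — so it applies verbatim on $(z_r,z_*)$, and together with $\dot{G}(z_r)>0$ (immediate since $r(z_r)=0$) we get $\dot{G}(z)\geq 0$ throughout $[z_r,z_*)$. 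Now the induction of Theorem \ref{thm2}(1) goes through unchanged: $z_1=G(z_0)>z_0$ because $h(z_0)<0$, and $z_*-z_1=\dot{G}(c)(z_*-z_0)\geq 0$ for some $c\in(z_0,z_*)$, so $z_1\leq z_*$; iterating, $(z_n)$ is monotone increasing and bounded above by $z_*$, hence converges to a fixed point of $G$, and $G(\zeta)=\zeta$ forces $h(\zeta)=0$, i.e.\ $\zeta=z_*$.

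The case $z_r>z_*$ is symmetric. The admissible initial guesses lie in $(z_*,z_r]$; there $0<r(z)<1$ and $h(z)>0$ by Remark \ref{signh}, the denominator satisfies $2+h^2(z)-2r(z)h(z)>2+h^2(z)-2h(z)=1+(1-h(z))^2>0$ as in the proof of Theorem \ref{thm5}(1), and the pointwise content of Lemma \ref{l1}(1) gives $\dot{G}(z)\geq 0$ on $(z_*,z_r]$ (the endpoint $z_r$ again covered by $r(z_r)=0$). Running the induction of Theorem \ref{thm5}(1) then produces a monotone decreasing sequence bounded below by $z_*$, which converges to the unique fixed point $z_*$ of $G$.

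The only delicate point — and the one I expect to be the crux — is the legitimacy of invoking Lemma \ref{l1} on the sub-interval strictly between $z_r$ and $z_*$, whereas the lemma as stated assumes the sign restriction on $r$ throughout $(z_w,z_*)$ or $(z_*,z_w')$. This is justified because the proof of Lemma \ref{l1} derives $\dot{G}(z)\geq 0$ from the local data at $z$ alone (the value and sign behaviour of $r$ there and the sign of $h(z)$ from Remark \ref{signh}), none of which is affected by what $r$ does outside that sub-interval. Once this is noted, the remainder is a routine repetition of the arguments already in place.
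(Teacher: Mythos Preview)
Your proof is correct and follows essentially the same approach as the paper: split on the relative position of $z_r$ and $z_*$, observe that on the sub-interval between them the sign of $r$ and $h$ cooperate, invoke the pointwise content of Lemma~\ref{l1} to get $\dot{G}\ge 0$, and run the standard monotone-bounded induction. The paper handles the well-definedness of $G$ and the applicability of Lemma~\ref{l1} more tersely (it simply writes ``Similarly to part (2) of Lemma~\ref{l1}''), whereas you spell out the denominator bound and the pointwise nature of the lemma explicitly; this extra care is appropriate and does not constitute a different method.
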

	\begin{proof}
		Given $z_r \in J'$. Consequently, either $z_r<z_*$ or $z_*<z_r$. Consider the case $z_r<z_*$. Note that $r(z)\leq 0$, $h(z)\leq0$ and $\left|r(z)\right|<1$ in $[z_r, z_*)$. Hence, the function $G(z)$ is well-defined in $[z_r, z_*)$. Similarly to part (2) of Lemma \ref{l1}, one can conclude that $\dot{G}(z)\geq 0$ in $[z_r, z_*)$. Let $z_0 \in [z_r, z_*)$. Then, $z_1$ exists. From the definition of $z_1$, one can get $z_0<z_1$. Further, 
		$$z_1-z_*=G(z_0)-G(z_*)=\dot{G}(c)(z_0-z_*)\hspace{3mm}\text{for some}\hspace{2mm}c \in (z_0, z_*).$$ Hence, $z_0<z_1<z_*$. Consider this conclusion is valid for $k=1,2,\dots,n.$ i.e., $z_1, z_2, \dots, z_n$ exist and satisfy $z_1<z_2<\dots<z_n<z_*$. Clearly, $z_{n+1}=G(z_n)$ exist and $z_n<z_{n+1}$. Note that $$z_{n+1}-z_*=G(z_n)-G(z_*)=\dot{G}(c)(z_n-z_*)\hspace{3mm}\text{for some}\hspace{2mm}c \in (z_n, z_*).$$ Consequently, $z_{n+1}<z_*$. Hence, the sequence $(z_n)$ obtained from the scheme \ref{iter} is monotonically increasing and bounded above by $z_*$. Hence, $(z_n)$ will converge to a fixed point $z_*$ of $G(z)$. The proof for the second case $z_*<z_r$ follows analogously. Hence, it is left out.  
	\end{proof}
	
	It is interesting to note that Theorem \ref{thm2}, Theorem \ref{thm5} and Theorem \ref{theorem4} are built on the assumption $\dot{r}(z)\leq0$ in $J'$. In some applications, $\dot{r}(z)\geq0$ is bounded above by a small positive number. The following theorem guarantees the convergence of the sequence $(z_n)$ generated by the proposed iterative method in such situations.
	
	\begin{theorem}\label{t12}
		Let $k_1\in (-\infty, \frac{1}{2})$. Then, the following statements hold:
		\begin{enumerate}
			\item Let $r(z)$ be positive and $\dot{r}(z)<k_1$ in $(z_w, z_*)$. Then, the sequence $(z_n)$ generated by the proposed iterative method converges monotonically to $z_*$ for any initial guess $z_0 \in (z_w, z_*)$.
			\item Let $r(z)$ be negative and $\dot{r}(z)<k_1$ in $(z_*, z_w')$. Then, the sequence $(z_n)$ generated by the proposed iterative method converges monotonically to $z_*$ for any initial guess $z_0 \in (z_*, z_w')$.
		\end{enumerate}
	\end{theorem}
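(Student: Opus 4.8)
The plan is to follow the template of the proof of Theorem~\ref{thm2}; the only genuinely new ingredient is a sign analysis of $\dot G$ that must now accommodate a possibly positive $\dot r$. I will describe part~(1); part~(2) is its mirror image.

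First, well-definedness. On $(z_w,z_*)$ Remark~\ref{signh} gives $h(z)<0$, and by hypothesis $r(z)>0$, so $r(z)h(z)<0$ and hence $2+h^2(z)-2r(z)h(z)>2>0$; thus $G$ is defined on the whole interval $(z_w,z_*)$ and the iteration cannot break down while the iterates remain there. Next, monotonicity of $G$: from \eqref{g'}, $\operatorname{sign}\bigl(\dot G(z)\bigr)=\operatorname{sign}\bigl(g(z)\bigr)$ with $g(z)=3h^4(z)+(2-4\dot r(z))h^2(z)+4r^2(z)h^2(z)-8r(z)h^3(z)$. Because $h<0$ and $r>0$ we have $r(z)h^3(z)<0$, so the cubic term $-8r(z)h^3(z)$ is positive; discarding it together with the nonnegative quartic term $3h^4(z)$ leaves
\[
g(z)\ \ge\ h^2(z)\bigl(2-4\dot r(z)+4r^2(z)\bigr)\ \ge\ h^2(z)\,(2-4k_1)\ >\ 0,
\]
using $r^2\ge0$ and $\dot r<k_1<\tfrac12$. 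Hence $\dot G>0$ throughout $(z_w,z_*)$.

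With $\dot G>0$ in hand the remainder is exactly the induction of Theorem~\ref{thm2}: for $z_0\in(z_w,z_*)$, since $h(z_0)<0$ and the denominator is positive, $z_0<z_1=G(z_0)$, while $z_*-z_1=G(z_*)-G(z_0)=\dot G(c)(z_*-z_0)>0$ gives $z_1<z_*$; iterating produces an increasing sequence trapped in $(z_w,z_*)$ and bounded above by $z_*$, hence convergent to a fixed point $L$ of $G$, and since $L\ge z_0>z_w$ while $h$ has only the zero $z_*$ in $(z_w,z_w')$, we get $L=z_*$. For part~(2) one has $h(z)>0$ and $r(z)<0$ on $(z_*,z_w')$, so again $2+h^2-2rh>2$ and $r h^3<0$, the same bound $g(z)\ge h^2(z)(2-4k_1)>0$ holds, and the monotone-decreasing, bounded-below argument concludes as before.

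I do not anticipate a real obstacle here: the essential observation is that the sign pattern of $h$ and $r$ on each half-interval makes both the cubic term $-8rh^3$ and the quartic term $3h^4$ work in our favour, so the whole estimate collapses to the elementary inequality $2-4k_1>0$. The only mild points requiring care are that $k_1$ is permitted to be negative (indeed any real number below $\tfrac12$), which is harmless since a smaller $k_1$ merely enlarges $2-4k_1$, and that $G$ must be verified to stay well-defined along the entire orbit, which is immediate from the uniform lower bound $2+h^2-2rh>2$ on the relevant half-interval.
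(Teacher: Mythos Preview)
Your proof is correct and follows essentially the same approach as the paper: both establish well-definedness from $r(z)h(z)<0$, then bound the numerator of $\dot G$ from below by $h^2(z)(2-4k_1)$ after discarding the nonnegative terms $3h^4$, $4r^2h^2$, and $-8rh^3$, and finally run the monotone induction of Theorem~\ref{thm2}. The only cosmetic difference is that the paper drops the $4r^2h^2$ term immediately while you carry it one extra line before discarding it.
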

	\begin{proof}
		First, we prove part \textit{(1)}. From the hypothesis $r(z)$ and $h(z)$ are having opposite sign in $(z_w, z_*)$. Hence, the function $G(z)$ is well-defined in $(z_w, z_*)$. Moreover, for $z \in (z_w, z_*)$, we have 
		\begin{align*}
			\dot{G}(z)&\geq\dfrac{2h^2-4\dot{r}(z)h^2(z)}{(2+h^2(z)-2r(z)h(z))^2}\geq\dfrac{h^2(z)(2-4k_1)}{2+h^2(z)-2r(z)h(z))^2}\geq0.
		\end{align*}
		Let $z_0 \in (z_w, z_*)$. From the definition of $z_1$, one can conclude that $z_0<z_1$. Now, $$z_1-z_*=G(z_0)-G(z_*)=\dot{G}(c)(z_0-z_*) \hspace{3mm}\text{for some}\hspace{2mm}c \in (z_0, z_*).$$ Consequently, $z_1<z_*$. Consider this conclusion is valid for $k=1,2,\dots,n$. i.e., $z_1,z_2,\dots,z_n$ exist and $z_{w}<z_0<z_1<\dots<z_n<z_{*}$. Now,  $z_{n+1}=G(z_n)=z_n-\dfrac{2h(z_n)}{2+h^2(z_n)-2r(z_n)h(z_n)}$ is well-defined and it is easy to conclude that $z_n<z_{n+1}$.
		\begin{equation*}
			z_{*}-z_{n+1}=G(z_{*})-G(z_n)=\dot{G}(c)(z_{*}-z_n)>0, \hspace{2mm}\text{for some}\hspace{2mm}c \in (z_n,z_{*}).
		\end{equation*}
		Consequently, $z_{n+1}<z_{*}$. Thus, the sequence $(z_n)$ generated by the scheme (\ref{iter}) is monotone increasing and bounded above by $z_{*}$. By using \eqref{iter}, it is easy to verify that $(z_n)$ will converge to the zero $z_{*}$ of $h(z)$. The proof of part \textit{(2)} is analogous, hence it is left out.
	\end{proof}
	Note that the above theorem does not guarantee the convergence of the sequence if $r(z)\geq 0$, $\dot{r}(z)<k_1$ and $z_0 \in (z_*,z_w')$. Similarly, there is no information regarding the convergence if $r(z)\leq0$, $\dot{r}(z)<k_1$ and $z_0 \in (z_w, z_*)$. We can answer this question favourably with additional assumptions on $r(z)$.
	\begin{theorem}\label{t11}
		Let $k_1$ and $k_2$ be two constants such that $0<k_2\leq 1$ and $8k_2^2+6k_1-3<0.$ Then, the following statements hold.
		\begin{enumerate}
			\item Let $0<r(z)<k_2$ and $\dot{r}(z)<k_1$ in $(z_*, z_w')$. Then, the sequence $(z_n)$ generated by the proposed iterative method \eqref{iter} converges monotonically to $z_*$ for any initial guess $z_0 \in (z_*, z_w')$.
			\item Let $-k_2<r(z)<0$ and $\dot{r}(z)<k_1$ in $(z_w, z_*)$. Then, the sequence $(z_n)$ generated by the proposed iterative method \eqref{iter} converges monotonically to $z_*$ for any initial guess $z_0 \in (z_w, z_*)$.
		\end{enumerate}
	\end{theorem}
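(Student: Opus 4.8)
The plan is to follow the template of the proofs of Theorem~\ref{thm5} and Theorem~\ref{t12}: on the relevant half of $J'$ I would first check that the map $G$ in \eqref{g} is well-defined, then establish $\dot{G}(z)\ge 0$ there, and finally run the monotone-and-bounded argument to force the iterates to converge to the unique fixed point $z_*$ of $G$. I describe part \textit{(1)}; part \textit{(2)} is the mirror image.

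For part \textit{(1)} I work on $(z_*,z_w')$, where Remark~\ref{signh} gives $h(z)>0$. Since $0<r(z)<k_2\le 1$ one has $r(z)h(z)<h(z)$, hence $2+h^2(z)-2r(z)h(z)>1+(1-h(z))^2>0$, so the iteration is well-defined on $(z_*,z_w')$. The crucial estimate is on the numerator $g(z)=3h^4+2h^2+4r^2h^2-4\dot{r}h^2-8rh^3$ of $\dot{G}(z)$ from \eqref{g'}. Using $\dot{r}(z)<k_1$, discarding the nonnegative term $4r^2h^2$, and using $0<r(z)<k_2$ together with $h(z)>0$ to replace $-8rh^3$ by $-8k_2h^3$, I obtain $g(z)\ge h^2(z)\,q(h(z))$ with $q(h)=3h^2-8k_2h+(2-4k_1)$. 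The discriminant of $q$ equals $64k_2^2-24+48k_1=8(8k_2^2+6k_1-3)$, which is negative by hypothesis; since the leading coefficient is positive, $q(h)>0$ for every real $h$, so $g(z)>0$ and $\dot{G}(z)>0$ on $(z_*,z_w')$.

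With $\dot{G}>0$ in hand the convergence is routine. For $z_0\in(z_*,z_w')$, positivity of $h(z_0)$ and of the denominator gives $z_1=G(z_0)<z_0$, while the mean value theorem $z_*-z_1=G(z_*)-G(z_0)=\dot{G}(c)(z_*-z_0)$ with $c\in(z_*,z_0)$ forces $z_*<z_1$. An induction then yields $z_*<z_{n+1}<z_n<\dots<z_0<z_w'$, so $(z_n)$ is monotonically decreasing and bounded below by $z_*$; its limit is a fixed point of the continuous map $G$, and since $G(z)=z$ if and only if $h(z)=0$, that limit must be the unique zero $z_*$ of $h$ in $J'$. For part \textit{(2)}, on $(z_w,z_*)$ one has $h(z)<0$ and $-k_2<r(z)<0$; now $r(z)>-1$ gives $r(z)h(z)<-h(z)$, hence $2+h^2-2rh>1+(1+h)^2>0$, the same computation yields $q(h)>0$ (the sign pattern of the bounds on $r$, $\dot{r}$, $h$ works out identically), and the sequence is monotonically increasing and bounded above by $z_*$.

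The main obstacle is the estimate $\dot{G}(z)\ge 0$. Unlike in Theorem~\ref{thm2} and Theorem~\ref{t12}, here $r$ and $h$ have the \emph{same} sign on the interval of interest, so the cubic term $-8rh^3$ genuinely works against us and must be absorbed; the two hypotheses $k_2\le 1$ and $8k_2^2+6k_1-3<0$ are precisely what make this possible — the former secures well-definedness and the reduction $-8rh^3\ge -8k_2h^3$ to a clean quadratic in $h$, and the latter makes that quadratic's discriminant negative. Apart from tracking inequality directions carefully when $h<0$ in part \textit{(2)}, no idea beyond the discriminant trick already used in Lemma~\ref{l1} is required.
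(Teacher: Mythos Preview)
Your proposal is correct and follows essentially the same route as the paper: well-definedness via $k_2\le 1$, dropping $4r^2h^2$, using $\dot r<k_1$ and $|r|<k_2$ to reduce the numerator of $\dot G$ to $h^2$ times a quadratic in $h$ whose discriminant is $8(8k_2^2+6k_1-3)<0$, then the monotone--bounded argument. The only cosmetic difference is that in part~\textit{(2)} the quadratic one actually obtains is $3h^2+8k_2h+(2-4k_1)=q(-h)$ rather than $q(h)$, but since the discriminant is the same your conclusion stands.
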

	\begin{proof}
		First, we prove part \textit{(1)}. As $0<k_2\leq1$, $G(z)$ is well-defined for all $z$ in $J'$. Moreover, we have 
		\begin{align*}
			\dot{G}(z)\geq\dfrac{3h^4+2h^2-4\dot{r}(z)h^2(z)-8r(z)h^3(z)}{(2+h^2(z)-2r(z)h(z))^2}
			\geq\dfrac{h^2(z)(3h^2(z)-8k_2h(z)+(2-4k_1))}{(2+h^2(z)-2r(z)h(z))^2}.
		\end{align*}
		Now, consider the polynomial $\mathrm{f}(h)=3h^2(z)-8k_2h(z)+(2-4k_1)$. The discriminant $D$ of the corresponding polynomial $\mathrm{f}(h)$ is $8(8k_2^2+6k_1-3)<0$. Consequently, $\mathrm{f}(h)\geq0$ in $(z_*, z_w')$. Hence, $\dot{G}(z)\geq 0$ in $(z_*, z_w')$. Let $z_0 \in (z_*, z_w')$. From the definition of $z_1$, it is easy to see that $z_1<z_0$. Now, $$z_1-z_*=G(z_0)-G(z_*)=\dot{G}(c)(z_0-z_*)\hspace{3mm}\text{for some}\hspace{2mm}c \in (z_*, z_0).$$ Hence, $z_*<z_1$. Consider this conclusion is valid for $k=1,2,\dots,n.$ i.e., $z_1, z_2, \dots, z_n$ exist and $z_*<z_n<z_{n-1}<\dots<z_1<z_w'$. Now, $z_{n+1}=G(z_n)$ is well-defined and $z_{n+1}<z_n$. 
		$$z_{n+1}-z_{*}=G(z_n)-G(z_*)=\dot{G}(c)(z_n-z_*)\hspace{3mm}\text{for some}\hspace{2mm}c \in (z_*, z_n).$$ Consequently, $z_*<z_{n+1}$. Thus, the sequence $(z_n)$ generated by the scheme \ref{iter} is monotone decreasing and bounded below by $z_*$. By using \ref{iter}, $(z_n)$ converges to the zero $z_*$ of $h(z)$. The proof of part \textit{(2)} is analogous, hence it is left out.		
	\end{proof}
	By clubbing the conclusions from Theorem \ref{t12} and Theorem \ref{t11}, we can get the global convergence property of the proposed iterative method in the interval $J'$ even when $\dot{r}(z)\geq 0$. This observation is stated as a remark.
	
	\begin{remark}\label{r4}
		Let $k_1$ and $k_2$ be two constants such that $0<k_2\leq1$ and $8k_2^2+6k_1-3<0$. Let $r(z)\neq0$, $\dot{r}(z)<k_1$ and $\left|r(z)\right| <k_2$ in $J'$. Then, the sequence $(z_n)$ generated by the proposed iterative method \eqref{iter} converges monotonically to $z_*$ for any initial guess $z_0 \in J'$. It is interesting to note that the condition $8k_2^2+6k_1-3<0$ naturally ensures that $k_1<\frac{1}{2}$.
	\end{remark}
	
	The following theorem provides sufficient conditions for monotone convergence of the Newton method in the interval $J'$.
	\begin{theorem}\label{Newton}
		Let $r(z)$ be a non-increasing function on $J'$.
		\begin{enumerate}
			\item If $r(z)$ is positive in $(z_w, z_*)$, then the sequence $(z_n)$ generated by the Newton method \eqref{iter1} converges monotonically to $z_*$ for any initial guess $z_0 \in (z_w, z_*)$. 
			\item If $r(z)$ is negative in $(z_*, z_w')$, then the sequence $(z_n)$ generated by the Newton method \eqref{iter1} converges monotonically to $z_*$ for any initial guess $z_0 \in (z_*, z_w')$.
		\end{enumerate}
	\end{theorem}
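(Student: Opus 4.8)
The plan is to mirror the monotone-convergence argument of Theorem \ref{thm2}, with the third-order map $G$ replaced by the Newton map $G_N(z) = z - \dfrac{h(z)}{\dot h(z)}$; by \eqref{ricatti} the denominator is precisely $\dot h(z) = 1 + h^2(z) - 2r(z)h(z)$, so \eqref{iter1} reads $z_{n+1} = G_N(z_n)$. I will carry out part (1) on the subinterval $(z_w, z_*)$ in detail, part (2) being the sign-reversed mirror image on $(z_*, z_w')$.

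First I would verify that $G_N$ is well-defined on $(z_w, z_*)$: by Remark \ref{signh} we have $h(z) < 0$ on $(z_w, z_*)$, and since $r(z) > 0$ by hypothesis, $\dot h(z) = 1 + h^2(z) - 2r(z)h(z) > 1$. Differentiating $G_N$ gives the standard identity $\dot G_N(z) = \dfrac{h(z)\ddot h(z)}{\dot h(z)^2}$, so on $(z_w, z_*)$ the sign of $\dot G_N$ is governed by that of $\ddot h$. Differentiating the Riccati equation \eqref{ricatti} yields $\ddot h(z) = 2\dot h(z)\big(h(z) - r(z)\big) - 2\dot r(z)h(z)$; on $(z_w, z_*)$ we have $h - r < 0$ (since $h < 0 < r$), $\dot h > 0$ from the previous step, and $-2\dot r(z)h(z) \le 0$ (since $\dot r \le 0$ and $h < 0$), whence $\ddot h(z) < 0$ and therefore $\dot G_N(z) \ge 0$, with strict positivity away from $z_*$. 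This is the crucial step, and it is precisely here that the hypotheses that $r$ is non-increasing and of one sign are consumed, through the decomposition $\ddot h = 2\dot h(h - r) - 2\dot r h$.

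With $\dot G_N \ge 0$ on $(z_w, z_*)$ the rest is the routine bootstrap already used for Theorem \ref{thm2}: for $z_0 \in (z_w, z_*)$ the sign of $h(z_0)/\dot h(z_0)$ gives $z_0 < z_1$, while $z_* - z_1 = G_N(z_*) - G_N(z_0) = \dot G_N(c)(z_* - z_0) > 0$ for some $c \in (z_0, z_*)$ gives $z_1 < z_*$; an induction then produces $z_w < z_0 < z_1 < \dots < z_n < z_*$. A monotone increasing sequence bounded above by $z_*$ converges to some $L \in (z_w, z_*]$, and passing to the limit in $z_{n+1} = z_n - h(z_n)/\dot h(z_n)$, using continuity of $h$ and positivity of $\dot h$ near $L$, forces $h(L) = 0$, so $L = z_*$ by uniqueness of the zero of $h$ in $J'$. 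Part (2) runs the same way on $(z_*, z_w')$: there $h > 0 > r$ give $\dot h > 1$ and $h - r > 0$, so $\ddot h > 0$ and again $\dot G_N \ge 0$; the resulting sequence is monotone decreasing and bounded below by $z_*$, converging to $z_*$. The only genuine obstacle is securing the sign of $\ddot h$ required for $\dot G_N \ge 0$; everything downstream of that is the template convergence argument.
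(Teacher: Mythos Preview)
Your argument is correct and follows the same template the paper intends: the paper simply states that the proof is analogous to Theorem~\ref{thm2} and omits the details, and your computation that $\dot G_N(z)=h(z)\ddot h(z)/\dot h(z)^2\ge 0$ on the relevant subinterval (via the differentiated Riccati identity $\ddot h=2\dot h(h-r)-2\dot r\,h$) is exactly the analogue of the $\dot G\ge 0$ step there. The remaining monotone-bounded-sequence argument is identical to the one in Theorem~\ref{thm2}.
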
	
	
	\begin{proof}
		The proof is analogous to Theorem \ref{thm2}, and hence it is left out. 
	\end{proof}

    \subsection*{Finding All The Zeros in a Given Interval}
    The proposed third-order method exhibits global convergence between successive singular points, similar to the second-order iterative method \cite{2003, 2002, 2010}, under suitable assumptions. In \cite{2003, 2002}, using Lemma \ref{lemma2} and Lemma \ref{lemma3}, by adding or subtracting $\frac{\pi}{2}$ with the current zero of $y(x)$, the initial guess for the next zero of $y(x)$ is obtained. 
    As the proposed iterative method is derived from the same Riccati differential equation in \cite{2003}, the initial guesses in \cite{2003} discussed for the second-order iterative method will work perfectly for the proposed third-order iterative method in conjunction with the various convergence conditions discussed in Theorem \ref{thm2} to Theorem \ref{t11}. Table \ref{on I' nonzero r}, Table \ref{on I' when r has zero} and Table \ref{initial guess on [a', b']} present the various cases in finding the initial guess for the proposed third-order iterative method.

    Let $z_{y_i}$ and $z_{w_i}$, $i \in \mathbb{N}$ denote the $i^{th}$ zero of $y(z)$ and $w(z)$, respectively. Let $I'=(z_{w_i}, z_{w_{i+k+1}}), i\in \mathbb{N}, k\in \mathbb{N}\cup\{0\}$. Let $z_{y_i}, z_{y_{i+1}}, \dots, z_{y_{i+k}}$ be the zeros of $y(z)$ in $I'$. Let $z_0^{j}$ denote the initial guess to find the $j^{th}$ zero of $y(z)$.  Table \ref{on I' nonzero r} presents the initial guess for the third-order iterative method if the interval $I'$ doesn't contain the zero of $r(z)$.  Figure \ref{fig1} and Figure \ref{fig2} illustrate this scenario.

\begin{table}[h]
\renewcommand{\arraystretch}{1.35}
\begin{tabular}{|c|c|c|}
\hline
\textbf{\begin{tabular}[c]{@{}c@{}}Convergence \\ conditions for \eqref{iter}\end{tabular}}
& \textbf{Sign of $r(z)$}
& \textbf{\begin{tabular}[c]{@{}c@{}}Initial guesses for computing \\ all zeros in $I'$\end{tabular}} \\
\hline
\multirow{2}{*}{\begin{tabular}[c]{@{}c@{}}
$\dot{r}(z) \leq 0$ and $\lvert r(z)\rvert< 1$ on $I'$\\
or\\
$0\leq \dot{r}(z)<k_1$ and $\left|r(z)\right|<k_2$ on $I'$,\\
where $8k_2^2+6k_1-3<0,$ \& $0<k_2\leq 1$
\end{tabular}}
& $r(z)>0$ on $I'$
& \begin{tabular}[c]{@{}c@{}}
$z_0^{i+k}=z_{w_{i+k+1}}-\dfrac{\pi}{2}\in (z_{y_{i+k}}, z_{w_{i+k+1}})$\\
$z_0^{j}=z_{y_{j+1}}-\dfrac{\pi}{2}\in (z_{y_j}, z_{w_{j+1}}),$ $i\leq j<i+k$
\end{tabular} \\
\cline{2-3}
& $r(z)<0$ on $I'$
& \begin{tabular}[c]{@{}c@{}}
$z_0^{i}=z_{w_i}+\dfrac{\pi}{2}\in (z_{w_i}, z_{y_i})$\\
$z_0^{j}=z_{y_{j-1}}+\dfrac{\pi}{2}\in (z_{w_j}, z_{y_j}),$ $i<j\leq i+k$
\end{tabular} \\
\hline
\end{tabular}
\caption{Initial guesses for finding all zeros in the interval $I'$ when $r(z) \neq 0$}
\label{on I' nonzero r}
\end{table}	
Table \ref{on I' when r has zero} presents the initial guess for the third-order iterative method if the interval $I'$ contains a unique zero of $r(z)$. Figure \ref{fig3} and Figure \ref{fig4} illustrate this scenario.
	\begin{figure}[h]
		\centering
		\begin{tikzpicture}
			
			\draw[thick] (0,0) -- (12,0);

			\draw[fill=red] (0,0) circle (2pt); 
			\node[above] at (0,0) {$z_{w_i}$};
			
			\draw[fill=blue] (1.5,0) circle (2pt); 
			\node[above] at (1.5,0) {$z_{y_i}$};
			
			\draw[fill=green] (2.5,0) circle (2pt);
			\node[below,scale=0.7] at (2.5,0) {$z_{y_{i+1}} - \frac{\pi}{2}$}; 
			
			\draw[fill=red] (3,0) circle (2pt); 
			\node[above] at (3,0) {$z_{w_{i+1}}$};
			
			\draw[fill=blue] (4.5,0) circle (2pt); 
			\node[above] at (4.5,0) {$z_{y_{i+1}}$};
			
			\node at (6,0.05) {$\dots$};

			\draw[fill=blue] (7.5,0) circle (2pt); 
			\node[above] at (7.5,0) {$z_{y_{i+k-1}}$};

			\draw[fill=green] (8.5,0) circle (2pt); 
			\node[below,scale=0.7] at (8.5,0) {$z_{y_{i+k}} - \frac{\pi}{2}$};

			\draw[fill=red] (9,0) circle (2pt); 
			\node[above] at (9,0) {$z_{w_{i+k}}$};

			\draw[fill=blue] (10.5,0) circle (2pt); 
			\node[above] at (10.5,0) {$z_{y_{i+k}}$};

			\draw[fill=green] (11.5,0) circle (2pt); 
			\node[below,scale=0.7] at (11.5,0) {$z_{w_{i+k+1}} - \frac{\pi}{2}$};

			\draw[fill=red] (12,0) circle (2pt); 
			\node[above] at (12,0) {$z_{w_{i+k+1}}$};

			\draw[|->, thick] (11.5,-0.5) -- (10.5,-0.5);

			\draw[->, thick] (10.5, 0.02) arc[start angle=0,end angle=180,radius=1]; 
			\draw[<-, thick, , >=latex] (9.5, 1) -- ++(0.2, 0);
			\draw[->, thick] (7.5, 0.02) arc[start angle=0,end angle=180,radius=1];  
			\draw[<-, thick,  >=latex] (6.5, 1) -- ++(0.2, 0); 
			\draw[->, thick] (4.5, 0.02) arc[start angle=0,end angle=180,radius=1]; 
			\draw[<-, thick,  >=latex] (3.5, 1) -- ++(0.2, 0); 
			\node[draw, fill=white, anchor=north west, rounded corners] at (-0.5, 1.5) {
				\scalebox{0.6}{	\begin{tabular}{rl}
						\textcolor{red}{\textbullet} & Singularities of $h(z)$\\
						\textcolor{blue}{\textbullet} & Zeros of $h(z)$\\
						\textcolor{green}{\textbullet} & Initial Guess \\
				\end{tabular} }
			};
		\end{tikzpicture}
		\caption{Illustration of Table \ref{on I' nonzero r} for the case $r(z)>0$}
		\label{fig1}
	\end{figure}

		\begin{figure}[H]
		\centering
		\begin{tikzpicture} 
			\draw[thick] (0,0) -- (12,0);
			
			\foreach \x/\label in {0/$z_{w_i}$, 3/$z_{w_{i+1}}$} {
				\draw[fill=red] (\x,0) circle (2pt); 
				\node[above] at (\x,0) {\label}; 
			}
            \draw[fill=blue] (4.5,0) circle (2pt); 
			\node[above] at (4.5,0) {$z_{y_i+1}$};
			\draw[fill=blue] (1.5,0) circle (2pt); 
			\node[above] at (1.5,0) {$z_{y_i}$};

			\draw[fill=green] (0.65,0) circle (2pt); 
			\node[below,scale=0.7] at (0.65,0) {$z_{w_i} + \frac{\pi}{2}$};

			\draw[fill=green] (3.5,0) circle (2pt); 
			\node[below,scale=0.7] at (3.5,0) {$z_{y_i} + \frac{\pi}{2}$}; 
			
			\draw[fill=blue] (7.6,0) circle (2pt); 
			\node[above] at (7.6,0) {$z_{y_{i+k-1}}$};

			\draw[fill=red] (9,0) circle (2pt); 
			\node[above] at (9,0) {$z_{w_{i+k}}$};
			
			\draw[fill=blue] (10.5,0) circle (2pt); 
			\node[above] at (10.5,0) {$z_{y_{i+k}}$};
			
			\draw[fill=red] (12,0) circle (2pt); 
			\node[above] at (12,0) {$z_{w_{i+k+1}}$};

			\draw[fill=green] (9.6,0) circle (2pt); 
			\node[below,scale=0.7] at (9.6,0) {$z_{y_{i+k-1}} + \frac{\pi}{2}$};

			\draw[<-, thick] (3.5,0) arc[start angle=0,end angle=180,radius=1]; 
	
			\draw[->, >=latex, thick] (2.5, 1) -- ++(0.2, 0); 
			
			\draw[<-, thick] (6.5,0) arc[start angle=0,end angle=180,radius=1]; 
			\draw[->, >=latex, thick] (5.5, 1) -- ++(0.2, 0); 
			
			\draw[<-, thick] (9.6,0) arc[start angle=0,end angle=180,radius=1]; 
			\draw[->, >=latex, thick] (8.6, 1) -- ++(0.2, 0); 
			\node at (7, 0.05) {$\dots$};

			\draw[|->, thick] (0.65,-0.5) -- (1.5,-0.5);

			\node[draw, fill=white, anchor=north east, rounded corners] at (13, 1.5) {
				\scalebox{0.6}{	\begin{tabular}{rl}
						\textcolor{red}{\textbullet} & Singularities of $h(z)$\\
						\textcolor{blue}{\textbullet} & Zeros of $h(z)$\\
						\textcolor{green}{\textbullet} & Initial Guess \\
				\end{tabular} }
			};
			
		\end{tikzpicture}
		\caption{Illustration of Table \ref{on I' nonzero r} for the case $r(z)<0$}
		\label{fig2}
	\end{figure}

\begin{table}[h]
\renewcommand{\arraystretch}{1.35}
\begin{tabular}{|c|c|c|}
\hline
\textbf{\begin{tabular}[c]{@{}c@{}}Convergence \\ conditions for \eqref{iter} \end{tabular}}
& \textbf{Location of $z_r$}
& \textbf{\begin{tabular}[c]{@{}c@{}}Initial guesses for computing \\ all zeros in $I'$\end{tabular}} \\
\hline
\multirow{2}{*}{\begin{tabular}[c]{@{}c@{}}
$\dot{r}(z) \leq 0$, $\lvert r(z) \rvert< 1$ on $I'$\\
and $z_r \in I'$, $r(z_r) = 0$
\end{tabular}}
& $z_{w_j}<z_{y_j}<z_r<z_{w_{j+1}}$
& \begin{tabular}[c]{@{}c@{}}
$z_0^{j} = z_r$, $z_0^{j+1}=z_r+\dfrac{\pi}{2} \in (z_{w_{j+1}}, z_{y_{j+1}})$\\
$z_0^{k}=z_{y_{k+1}}-\dfrac{\pi}{2} \in (z_{y_k}, z_{w_{k+1}}),\quad i\leq k<j$\\
$z_0^{t}=z_{y_{t-1}}+\dfrac{\pi}{2} \in (z_{w_t}, z_{y_t}),\quad j+1<t\leq i+k$
\end{tabular} \\
\cline{2-3}
& $z_{w_j}<z_r<z_{y_j}<z_{w_{j+1}}$
& \begin{tabular}[c]{@{}c@{}}
$z_0^{j} = z_r$, $z_0^{j-1}=z_r-\dfrac{\pi}{2}\in (z_{y_{j-1}}, z_{w_j})$\\
$z_0^{k}=z_{y_{k+1}}-\dfrac{\pi}{2}\in (z_{y_k}, z_{w_{k+1}}),\quad i\leq k<j-1$\\
$z_0^{t}=z_{y_{t-1}}+\dfrac{\pi}{2} \in (z_{w_t}, z_{y_t}),\quad j<t\leq i+k$
\end{tabular} \\
\hline
\end{tabular}
\caption{Initial guesses for finding all zeros in the interval $I'$ when $r(z)$ has a zero}
\label{on I' when r has zero}
\end{table}

	\begin{figure}[h]
		\centering
		\begin{tikzpicture}

			\draw[thick] (0,0) -- (13,0);

			\draw[fill=red] (0,0) circle (2pt);
			\node[above] at (0,0) {$z_{w_i}$};
			
			\node at (1,0.05) {$\dots$}; 
			
			\draw[fill=green] (2,0) circle (2pt); 
			\node[below,scale=0.8] at (2,0) {$z_{y_{j-1}} - \frac{\pi}{2}$};
			
			\draw[fill=red] (3,0) circle (2pt); 
			\node[above] at (3,0) {$z_{w_{j-1}}$};
			
			\draw[fill=blue] (4,0) circle (2pt); 
			\node[above] at (4,0) {$z_{y_{j-1}}$};
			
			\draw[fill=green] (5,0) circle (2pt); 
			\node[below,scale=0.8] at (5,0) {$z_{y_j} - \frac{\pi}{2}$};
			
			\draw[fill=red] (6,0) circle (2pt); 
			\node[above] at (6,0) {$z_{w_j}$};
			
			\draw[fill=blue] (7,0) circle (2pt); 
			\node[above,scale=0.8] at (7,0) {$z_{y_j}$};
			
			\draw[fill=green] (8,0) circle (2pt);
			\node[below] at (8,0) {$z_{r}$};
			
			\draw[fill=red] (9,0) circle (2pt);
			\node[above] at (9,0) {$z_{w_{j+1}}$};
			
			\draw[fill=green] (10,0) circle (2pt); 
			\node[below,scale=0.8] at (10,0) {$z_{r} + \frac{\pi}{2}$};
			
			\draw[fill=blue] (11,0) circle (2pt); 
			\node[above] at (11,0) {$z_{y_{j+1}}$};
			
			\node at (12,0.05) {$\dots$}; 
			
			\draw[fill=red] (13,0) circle (2pt); 
			\node[above] at (13,0) {$z_{w_{i+k+1}}$};

			\draw[|->, thick] (8,-0.5) -- (7,-0.5); 
			\draw[|->, thick] (10,-0.5) -- (11,-0.5);

			\draw[->, thick] (4,0.02) arc[start angle=0,end angle=180,radius=1]; 
			\draw[<-, thick,  >=latex] (3.0, 1) -- ++(0.2, 0);
			\draw[->, thick] (7,0.02) arc[start angle=0,end angle=180,radius=1]; 
			\draw[<-, thick,  >=latex] (6, 1) -- ++(0.2, 0); 
			\draw[<-, thick] (10,0.02) arc[start angle=0,end angle=180,radius=1]; 
			\draw[->, >=latex, thick] (9, 1) -- ++(0.2, 0); 
			
			\node[draw, fill=white, anchor=north east, rounded corners] at (13.5, 1.5) {
				\scalebox{0.6}{	\begin{tabular}{rl}
						\textcolor{red}{\textbullet} & Singularities of $h(z)$\\
						\textcolor{blue}{\textbullet} & Zeros of $h(z)$\\
						\textcolor{green}{\textbullet} & Initial Guess \\
				\end{tabular} }
			};
			
		\end{tikzpicture}
		\caption{Illustration of Table \ref{on I' when r has zero} for the case $z_{w_j}<z_{y_j}<z_r<z_{w_{j+1}}$}
		\label{fig3}
	\end{figure}
	
	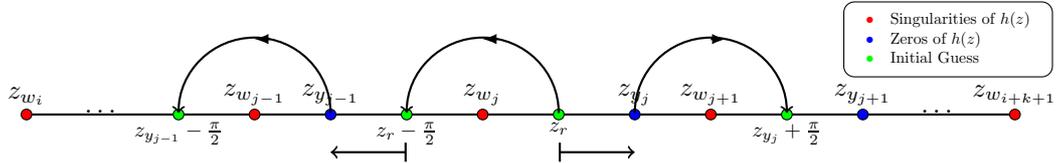
\begin{figure}[h]
		\centering
		\begin{tikzpicture}
			
			\draw[thick] (0,0) -- (13,0);

			\draw[fill=red] (0,0) circle (2pt); 
			\node[above] at (0,0) {$z_{w_i}$};
			
			\node at (1,0.05) {$\dots$};
			
			\draw[fill=green] (2,0) circle (2pt); 
			\node[below,scale=0.8] at (2,0) {$z_{y_{j-1}} - \frac{\pi}{2}$};
			
			\draw[fill=red] (3,0) circle (2pt); 
			\node[above] at (3,0) {$z_{w_{j-1}}$};
			
			\draw[fill=blue] (4,0) circle (2pt);
			\node[above] at (4,0) {$z_{y_{j-1}}$};
			
			\draw[fill=green] (5,0) circle (2pt); 
			\node[below,scale=0.8] at (5,0) {$z_{r} - \frac{\pi}{2}$};
			
			\draw[fill=red] (6,0) circle (2pt); 
			\node[above] at (6,0) {$z_{w_j}$};
			
			\draw[fill=green] (7,0) circle (2pt); 
			\node[below,scale=0.8] at (7,0) {$z_r$};
			
			\draw[fill=blue] (8,0) circle (2pt); 
			\node[above] at (8,0) {$z_{y_j}$};
			
			\draw[fill=red] (9,0) circle (2pt); 
			\node[above] at (9,0) {$z_{w_{j+1}}$};
			
			\draw[fill=green] (10,0) circle (2pt); 
			\node[below,scale=0.8] at (10,0) {$z_{y_j} + \frac{\pi}{2}$};
			
			\draw[fill=blue] (11,0) circle (2pt); 
			\node[above] at (11,0) {$z_{y_{j+1}}$};
			
			\node at (12,0.05) {$\dots$};
			
			\draw[fill=red] (13,0) circle (2pt); 
			\node[above] at (13,0) {$z_{w_{i+k+1}}$};

			\draw[|->, thick] (5,-0.5) -- (4,-0.5); 
			\draw[|->, thick] (7,-0.5) -- (8,-0.5); 
		
			\draw[->, thick] (4,0.02) arc[start angle=0,end angle=180,radius=1]; 
			\draw[<-, thick,  >=latex] (3.0, 1) -- ++(0.2, 0);
			\draw[->, thick] (7,0.02) arc[start angle=0,end angle=180,radius=1]; 
			\draw[<-, thick,  >=latex] (6, 1) -- ++(0.2, 0); 
			\draw[<-, thick] (10,0.02) arc[start angle=0,end angle=180,radius=1];
			\draw[->, >=latex, thick] (9, 1) -- ++(0.2, 0); 
			
			\node[draw, fill=white, anchor=north east, rounded corners] at (13.5, 1.5) {
				\scalebox{0.6}{	\begin{tabular}{rl}
						\textcolor{red}{\textbullet} & Singularities of $h(z)$\\
						\textcolor{blue}{\textbullet} & Zeros of $h(z)$\\
						\textcolor{green}{\textbullet} & Initial Guess \\
				\end{tabular} }
			};
			
		\end{tikzpicture}
		\caption{Illustration of Table \ref{on I' when r has zero} for the case $z_{w_j}<z_r<z_{y_j}<z_{w_{j+1}}$}
		\label{fig4}
	\end{figure}

     In many situations, we have an interval $[a', b']$ containing a finite number of zeros of $h(z)$, where $a'$, $b'$ need not be a singular point of $h(z)$. Table \ref{initial guess on [a', b']} provides suitable initial guesses to find all the zeros in a given interval $[a', b']$. Assume that $z_{y_j} \in [a', b'],$ $\hspace{3mm}\forall\hspace{2mm}j=i, i+1, \dots, i+k,$ $i\in \mathbb{N},$ $k\in \mathbb{N}\cup\{0\}$ and $z_{y_j}<z_{y_k}$ if $j<k$. Define
\begin{equation*}
\xi_1(x)=
\begin{cases}
1, & x\ge 0,\\
-1, & x<0,
\end{cases}
\qquad \text{and} \qquad
\xi_2(x)=
\begin{cases}
1, & x>0,\\
-1, & x\le 0.
\end{cases}
\end{equation*}


\begin{table}[h]
\renewcommand{\arraystretch}{1.35}
\begin{tabular}{|c|c|c|}
\hline
\textbf{\begin{tabular}[c]{@{}c@{}}Convergence \\ conditions for \eqref{iter} \end{tabular}}
& \textbf{Sign of $r(z)$}
& \textbf{\begin{tabular}[c]{@{}c@{}}Initial guesses for computing \\ all zeros in $[a', b']$\end{tabular}} \\
\hline
\multirow{2}{*}{\begin{tabular}[c]{@{}c@{}}
$\dot{r}(z) \leq 0$ and $\lvert r(z)\rvert<1$ on $[a', b']$\\
or\\
$0\leq \dot{r}(z)<k_1$ and $\left|r(z)\right|<k_2$ on $[a', b']$,\\
where $8k_2^2+6k_1-3<0$, \& $0<k_2\leq 1$
\end{tabular}}
& $r(z)>0$ on $[a', b']$
& \begin{tabular}[c]{@{}c@{}}
$z_0^{i+k}=b'-\dfrac{\pi}{2}
\left(\dfrac{1-\xi_1(h(b'))}{2}\right)$\\
$z_0^{j}=z_{y_{j+1}}-\dfrac{\pi}{2},\quad i\leq j\leq i+k-1$
\end{tabular} \\
\cline{2-3}
& $r(z)<0$ on $[a', b']$
& \begin{tabular}[c]{@{}c@{}}
$z_0^{i}=a'+\dfrac{\pi}{2}
\left(\dfrac{1+\xi_2(h(a'))}{2}\right)$\\
$z_0^{j}=z_{y_{j-1}}+\dfrac{\pi}{2},\quad i+1\leq j\leq i+k$
\end{tabular} \\
\hline
\end{tabular}
\caption{Initial guesses for finding all zeros in $[a', b']$}
\label{initial guess on [a', b']}
\end{table}

	\begin{remark}\label{remark5}
    Let $\Omega(z)=1+\dot{r}(z)-r^2(z)$. Using the Sturm Convexity Theorem \ref{sturm_lemma}, one can improve the initial guesses provided in Table \ref{on I' nonzero r} to Table \ref{initial guess on [a', b']} as in \cite[p 832]{2003}.

		\begin{enumerate}
			\item Let $r(z)>0$ and $\dot{\Omega}(z)>0$ in $I'$. Define $\Delta_{B_j}=z_{y_{j+2}}-z_{y_{j+1}}$, $i \leq j \leq i+k-2$. Then, 
			\begin{equation}\label{concave}
				z_{y_{j}}<z_{y_{j+1}}-\Delta_{B_j}<z_{y_{j+1}}-\frac{\pi}{2}<z_{w_{j+1}}.
			\end{equation}
			\item Let $r(z)<0$ and $\dot{\Omega}(z)<0$ in $I'$. Define $\Delta_{F_j}=z_{y_{j-1}}-z_{y_{j-2}}$, $i+2\leq j\leq i+k$. Then,
			\begin{equation}\label{convex}
				z_{w_j}<z_{y_{j-1}}+\frac{\pi}{2}<z_{y_{j-1}}+\Delta_{F_j}<z_{y_j}.
			\end{equation}
		\end{enumerate}
	\end{remark}

	\section{Application} \label{application}
	
	In this section, the versatile application of the proposed third-order iterative method is demonstrated by finding all the zeros of orthogonal polynomials, such as Hermite and Legendre, and special functions, including the Bessel, Confluent hypergeometric, Coulomb wave, and Cylinder functions, within a given interval.  
	
	\subsection{Legendre polynomial}\label{Legendre}
	In this subsection, all the zeros of a given Legendre polynomial are obtained using the proposed third-order iterative method. It is well-known that the Legendre polynomial $P_n(x)$ of degree $n \in \mathbb{N}$, is a solution of the second order linear differential equation $$(1-x^2)y''(x)-2xy'(x)+n(n+1)y(x)=0.$$
	By using the formulas in \cite[p. 147]{speical functions}, \cite[p. 166]{speical functions} and three-term recurrence (see \cite[p. 148]{speical functions}), one can get the following CDE
	\begin{equation}\label{leg_rec}
		\left\{\begin{aligned}
			P'_n(x)&=\frac{(n+1)x}{1-x^2}P_{n}(x)-\frac{n+1}{1-x^2}P_{n+1}(x)\\
			P'_{n+1}(x)&=-\frac{(n+1)x}{1-x^2}P_{n+1}(x)+\frac{n+1}{1-x^2}P_{n}(x).
		\end{aligned}\right.
	\end{equation}
	For the choice $y(x)=P_n(x)$ and $w(x)=P_{n+1}(x)$ all the conclusions of Theorem \ref{thm1} hold true in $(-1, 1)$. One can check that $z(x)=\dfrac{n+1}{2}\log\left(\dfrac{1+x}{1-x}\right)$ and $x(z)=\tanh\left(\dfrac{z}{n+1}\right)$ are the transformations discussed in Section \ref{2}. Hence, $h(z)=\frac{\tilde{y}(x(z))}{\tilde{w}(x(z))}=-\dfrac{P_n(x(z))}{P_{n+1}(x(z))}$ satisfies the Riccati differential equation \eqref{ricatti} with $r(z)=-\tanh\left(\dfrac{z}{n+1}\right)$. Hence, $z \mapsto r(z)$ is decreasing and $\left| r(z) \right|<1$ for all $z$ in $(-\infty, \infty)$. As the zeros of $P_n(x)$ are distributed symmetrically about $x=0$ (see \cite[eq. 6.20, p. 143]{speical functions}), it is enough to find all the positive zeros of $P_n(x)$. If $n$ is even, then $z=0$ is a singular point of $h(z)$. Hence, using initial guesses from Table \ref{on I' nonzero r}, all $\frac{n}{2}$ zeros in $(0, \infty)$ can be obtained. i.e. By choosing $z_0^{1}=\frac{\pi}{2}$ as the initial guess, one can find the first positive zero $z_{y_1}$ of $P_n(x(z))$ then rest of the zeros can be obtained successively using the initial guess $z_0^{i}=z_{y_{i-1}}+\frac{\pi}{2}$, $i=2,3,\cdots \frac{n}{2}$. 
	
	If $n$ is odd then $z=0$ is the first zero of $h(z)$ in $[0, \infty)$. Hence, using initial guesses from Table \ref{on I' nonzero r}, all $\frac{(n-1)}{2}$ zeros in $(0, \infty)$ can be obtained. i.e. By choosing $z_0^{1}=\frac{\pi}{2}$ as the initial guess, one can find the first positive zero $z_{y_1}$ of $P_n(x(z))$ then rest of the zeros can be obtained successively using the initial guess $z_0^{i}=z_{y_{i-1}}+\frac{\pi}{2}$, $i=2,3,\cdots \frac{n-1}{2}$. 
	
	Note that $\Omega(z)=1+\dot{r}(z)-r^2(z)=1-\dfrac{1}{n+1}\text{sech}^2\left(\frac{z}{n+1}\right)-\tanh^2\left(\frac{z}{n+1}\right)$. Hence, for $n \in \mathbb{N},$ $z \mapsto \Omega(z)$ is decreasing on $(0, \infty)$. Consequently, using Remark \ref{remark5} and \eqref{convex}, one can get a better initial guess as discussed.

	\subsection{Hermite polynomial} \label{HP-T} In this subsection, all the zeros of a given Hermite polynomial are obtained using the proposed third-order iterative method. It is well-known that the Hermite polynomial $H_n(x)$ of degree $n \in \mathbb{N}$, is a solution of the second order linear differential equation (see \cite[eq. 6.34, p. 150]{speical functions}) $$y''(x)-2xy'(x)+2ny(x)=0.$$ By using the formula \cite[eq. 18.9.25]{OLB10} and the recurrence relation \cite[Table 18.9.1]{OLB10}, one can get the following CDE
	
	\begin{equation}\label{rec_H}
		\left\{ \begin{aligned}
			H'_{n}(x)&=2xH_{n}(x)-H_{n+1}(x)\\
			H'_{n+1}(x)&=2(n+1)H_{n}(x).
		\end{aligned} \right.
	\end{equation}
	For the choice $y(x)=H_n(x)$ and $w(x)=H_{n+1}(x)$ all the conclusions of Theorem \ref{thm1} hold true in $(-\infty, \infty)$. One can verify that $z(x)=\sqrt{2(n+1)}x$ and $x(z)=\frac{z}{\sqrt{2(n+1)}}$ are the transformations discussed in Section \ref{2}. Hence, $h(z)=\frac{\tilde{y}(x(z))}{\tilde{w}(x(z))}=-\frac{\sqrt{2(n+1)}H_n(x(z))}{H_{n+1}(x(z))}$ satisfies the Riccati differential equation \eqref{ricatti}, with $r(z)=-\frac{z}{2(n+1)}$. Hence, $z\mapsto r(z)$ is decreasing on $(-\infty, \infty)$. Using the bounds for zeros of the Hermite polynomial (see \cite[p 168]{speical functions}), one can conclude that all zeros of the Hermite polynomial $H_n(x)$ must lie in the interval $(-\sqrt{2n+1}, \sqrt{2n+1})$. Consequently, all zeros of $H_n(x(z))$ lie in the interval $\mathcal{I}$, where \linebreak $\mathcal{I}=(-\sqrt{2(n+1)}\sqrt{2n+1}, \sqrt{2(n+1)}\sqrt{2n+1})$. Furthermore, one can verify that $\left|r(z)\right|<1$, $\hspace{2mm}\forall\hspace{2mm}z \in (-2(n+1), 2(n+1))$. Moreover, the interval $(-2(n+1), 2(n+1))$ contains all zeros of $H_n(x(z))$. As the zeros of $H_n(x)$ are distributed symmetrically about $x=0$ (see \cite[Table 18.6.1]{OLB10}), it is enough to find all the positive zeros of $H_n(x(z))$. If $n$ is even, then $z=0$ is a singular point of $h(z)$. Hence, using initial guesses from Table \ref{on I' nonzero r}, all $\frac{n}{2}$ zeros in $(0, \sqrt{2(n+1)}\sqrt{2n+1})$ can be obtained. i.e. By choosing $z_0^{1}=\frac{\pi}{2}$ as the initial guess, one can find the first positive zero $z_{y_1}$ of $H_n(x(z))$ then rest of the zeros can be obtained successively using the initial guess $z_0^{i}=z_{y_{i-1}}+\frac{\pi}{2}$, $i=2,3,\cdots \frac{n}{2}$.

	If $n$ is odd, $z=0$ is the first zero of $h(z)$. Hence, using initial guesses from Table \ref{on I' nonzero r}, all $\frac{n-1}{2}$ zeros in $(0, \sqrt{2(n+1)}\sqrt{2n+1})$ can be obtained. i.e. By choosing $z_0^{1}=\frac{\pi}{2}$ as the initial guess, one can find the first positive zero $z_{y_1}$ of $H_n(x(z))$ then rest of the zeros can be obtained successively using the initial guess $z_0^{i}=z_{y_{i-1}}+\frac{\pi}{2}$, $i=2,3,\cdots \frac{n-1}{2}$. 
	
	Note that 
	$\Omega(z)=1+\dot{r}(z)-r^2(z)=1-\dfrac{1}{2(n+1)}-\dfrac{z^2}{4(n+1)^2}$ and $z \mapsto \Omega(z)$ is decreasing on $(0, \infty)$. Consequently, using Remark \ref{remark5} and \eqref{convex}, one can get a better initial guess as discussed. It is worth mentioning that one can find all the zeros of the following class of Parabolic cylinder function $U(a,x)$ of the form $U\left(-n-\frac{1}{2}, \sqrt{2}x\right)$ as they are closely related to $H_n(x)$. More specifically,  $H_n(x) = 2^{\frac{n}{2}} e^{\frac{x^2}{2}} U\left(-n-\frac{1}{2}, \sqrt{2}x\right)$ (see \cite[eq. 12.7.2]{OLB10}).

	\subsection{Bessel function} \label{Bessel_im}
	In this subsection, using the proposed third-order iterative method, all the positive zeros of the Bessel function $J_{\mu}(x)$ of order $\mu>-1$ in a given interval $[a',b']$ are obtained. For $k\in \mathbb{N}$, $j_{\mu,k}$ denotes the $k$-th positive zero of $J_\mu(x)$. It is interesting to note that when $\mu>-1$ the zeros of $J_{\mu}(x)$ are all real (see \cite[sec. 10.21]{OLB10}).
	Bessel function $J_{\mu}(x)$ can be considered as a solution of the following second-order linear differential equation $$x^2y''(x)+xy'(x)+(x^2-\mu^2)y(x)=0.$$   
	By using the recurrence relations \cite[eq. 10.6.2]{OLB10}, one can get the following coupled differential equations
	\begin{equation}\label{e1}
		\left\{
		\begin{aligned}
			J_{\mu}'(x)&=-\frac{\mu}{x}J_{\mu}(x)+J_{\mu-1}(x)\\
			J_{\mu-1}'(x)&=\frac{\mu-1}{x}J_{\mu-1}(x)-J_{\mu}(x)
		\end{aligned}\right.
	\end{equation}
	and 
	\begin{equation}\label{e2}
		\left\{ \begin{aligned}
			J_{\mu}'(x)&=\frac{\mu}{x}J_{\mu}(x)-J_{\mu+1}(x)\\
			J_{\mu+1}'(x)&=-\frac{\mu+1}{x}J_{\mu+1}(x)+J_{\mu}(x).
		\end{aligned}\right.
	\end{equation}
	
	\textbf{Case 1 $\mu\geq\frac{1}{2}$:} For the choice $y(x)=J_{\mu}(x)$, $w(x)=J_{\mu-1}(x)$ and using the CDE \eqref{e1}, one can verify that all the conclusions of Theorem \ref{thm1} hold true in $(0, \infty)$. Consequently, $h(x)=\frac{J_{\mu}(x)}{J_{\mu-1}(x)}$ satisfies the Riccati differential equation \eqref{ricatti}, with $r(x)=\frac{\mu-\frac{1}{2}}{x}$. For $k \in \mathbb{N}$, using the inequality (see \cite[eq. 1, p. 2]{bound}) one can get $j_{\mu,k}>\mu$. In other words, all the positive zeros of $J_\mu(x)$ are in $(\mu,\infty)$. Note that for $\mu=\frac{1}{2}$, $r(x)\equiv 0$. Consequently, $h(x)=\tan x$. Hence, the positive zeros of $J_{\frac{1}{2}}(x)$ are $n\pi$, $n\in\mathbb{N}$. For $x \in (\mu, \infty)$, and $\mu>\frac{1}{2}$ it is easy to verify that $x \mapsto r(x)$ is decreasing and $0<r(x)<1$. Let 
	$[a',b'] \subset (\mu,\infty)$ contain $k+1$-zeros. i.e., $j_{\mu,l} \in [a',b']$, $i \leq l \leq i+k$ for some $i\in \mathbb{N}$. To find all the zeros in $[a',b']$,  one can get the initial guesses from Table \ref{initial guess on [a', b']}. i.e., the process will start with finding the largest zero in this interval. If $h(b')= 0$, then the largest zero in this interval is $j_{\mu,i+k}=b'$ otherwise the initial guess for $j_{\mu,i+k}$ is $b'-\frac{\pi}{2}\left(\frac{1-sign(h(b'))}{2}\right)$. The rest of the zeros $j_{\mu,l}$ can be obtained using the initial guess $j_{\mu,l+1}-\frac{\pi}{2}$, $i\leq l \leq i+k-1$. Note that $\Omega(x)=1+\dot{r}(x)-r^2(x)=1-\left(\frac{\mu^2-\frac{1}{4}}{x^2}\right)$ and for $\mu\geq\frac{1}{2}$, $x\mapsto \Omega(x)$ is increasing on $(\mu, \infty)$. Consequently, using Remark \ref{remark5} and \eqref{concave}, one can get a better initial guess, as discussed.
	
	\textbf{Case 2 $-1< \mu <\frac{1}{2}$:} For the choice $y(x)=J_{\mu}(x)$, $w(x)=J_{\mu+1}(x)$ and using the CDE \eqref{e2}, one can verify that all conclusions of Theorem \ref{thm1} hold true in $(0, \infty)$.  Consequently, $h(x)=\frac{\tilde{y}(x)}{\tilde{w}(x)}=-\frac{J_{\mu}(x)}{J_{\mu+1}(x)}$ satisfies the Riccati differential equation \eqref{ricatti}, with $r(x)=-\frac{\mu+\frac{1}{2}}{x}$. Note that for $\mu=-\frac{1}{2}$, $r(x)\equiv 0$. Consequently, $h(x)=-\cot x$. Hence, the positive zeros of $J_{-\frac{1}{2}}(x)$ are $(n+\frac{1}{2})\pi$, $n\in\mathbb{N}$.  We will handle the remaining situations in three sub-cases.
	
	\textbf{Sub case 2(a) $0\leq \mu <\frac{1}{2}$:} For the choice $\alpha=0$ in \cite[p. 490]{Wa22}, one can get $j_{\mu, k}>\frac{\pi}{2} \left(\mu+\frac{3}{2}\right)$, $k\in \mathbb{N}$. In other words, all the positive zeros of $J_\mu(x)$ are in $(\frac{\pi}{2}\left(\mu+\frac{3}{2}\right),\infty)$. For $x \in (\frac{\pi}{2}\left(\mu+\frac{3}{2}\right),\infty)$, one can verify that $x \mapsto r(x)$ is negative, increasing, $$r(x)>-\frac{4(\mu+\frac{1}{2})}{3\pi+2\mu\pi}>-\frac{4}{3\pi} ~~~~ \text{and}~~~0< \dot{r}(x)<\frac{16(\mu+\frac{1}{2})}{9\pi^2}<\frac{16}{9\pi^2}.$$ For the choice $k_1=\frac{16}{9\pi^2}$ and $k_2=\frac{4}{3\pi}$, one can verify that $8k_2^2+6k_1-3<0.$  Let 
	$[a',b'] \subset (\frac{\pi}{2}\left(\mu+\frac{3}{2}\right),\infty)$ contain $k+1$-zeros. i.e., $j_{\mu,l} \in [a',b']$, $i \leq l \leq i+k$ for some $i\in \mathbb{N}$. To find all the zeros in $[a',b']$, one can get the initial guesses from Table \ref{initial guess on [a', b']}. i.e., the process will begin by finding the smallest zero within this interval. If $h(a')= 0$, then the smallest zero in this interval is $j_{\mu,i}=a'$ otherwise the initial guess for $j_{\mu,i}$ is $a'+\frac{\pi}{2}\left(\frac{1+sign(h(a'))}{2}\right)$. The rest of the zeros $j_{\mu,l}$ can be obtained using the initial guess $j_{\mu,l-1}+\frac{\pi}{2}$, $i+1\leq l \leq i+k$.

	\textbf{Sub case 2(b) $-\frac{1}{2}< \mu<0$:} 
	For $k \in \mathbb{N}$, using the inequality (see \cite[eq. 2]{c2}, \cite[p. 1313]{boundz}) one can get $j_{\mu,k}>\sqrt{(\mu+1)(\mu+5)}$. In other words, all the positive zeros of $J_\mu(x)$ are in $(\sqrt{(\mu+1)(\mu+5)},\infty)$. For $x \in (\sqrt{(\mu+1)(\mu+5)}, \infty)$, it is easy to verify that $x \mapsto r(x)$ is negative, increasing $$r(x)=-\frac{\mu+\frac{1}{2}}{x}>\frac{-1}{\sqrt{(\mu+1)(\mu+5)}}>-\frac{1}{3}, ~~~~ \text{and}~~~0<\dot{r}(x)=\frac{\mu+\frac{1}{2}}{x^2}<\frac{1}{2(\mu+1)(\mu+5)}<\frac{2}{9}.$$ For the choice $k_1=\frac{2}{9}$ and $k_2=\frac{1}{3}$, one can verify that $8k_2^2+6k_1-3<0.$  Let 
	$[a',b'] \subset (\sqrt{(\mu+1)(\mu+5)},\infty)$ contain $k+1$-zeros. i.e., $j_{\mu,l} \in [a',b']$, $i \leq l \leq i+k$ for some $i\in \mathbb{N}$. To find all the zeros in $[a',b']$, one can get the initial guesses from Table \ref{initial guess on [a', b']}. i.e., the process will begin by finding the smallest zero within this interval. If $h(a')= 0$, then the smallest zero in this interval is $j_{\mu,i}=a'$ otherwise the initial guess for $j_{\mu,i}$ is $a'+\frac{\pi}{2}\left(\frac{1+sign(h(a'))}{2}\right)$. The rest of the zeros $j_{\mu,l}$ can be obtained using the initial guess $j_{\mu,l-1}+\frac{\pi}{2}$, $i+1\leq l \leq i+k$.
	
	\textbf{Sub case 2(c) $-1<\mu<-\frac{1}{2}$:} It is easy to see that for $x\in (-(\mu+\frac{1}{2}), \infty)$, $0<r(x)<1$ and $x\mapsto r(x)$ is decreasing. Let $[a',b'] \subset (-(\mu+\frac{1}{2}),\infty)$ contain $k+1$-zeros. i.e., $j_{\mu,l} \in [a',b']$, $i \leq l \leq i+k$ for some $i\in \mathbb{N}$. To find all the zeros in $[a',b']$,  one can get the initial guesses from Table \ref{initial guess on [a', b']}. i.e., the process will start with finding the largest zero in this interval. If $h(b')= 0$, then the largest zero in this interval is $j_{\mu,i+k}=b'$ otherwise the initial guess for $j_{\mu,i+k}$ is $b'-\frac{\pi}{2}\left(\frac{1-sign(h(b'))}{2}\right)$. The rest of the zeros $j_{\mu,l}$ can be obtained using the initial guess $j_{\mu,l+1}-\frac{\pi}{2}$, $i\leq l \leq i+k-1$. Now consider the interval $(0,-(\mu+\frac{1}{2})]$. Clearly $r(x)\geq 1$ in $(0,-(\mu+\frac{1}{2})]$. Using the inequality (see \cite[eq. 2]{c2}, \cite[p. 1313]{boundz}) one can get $j_{\mu,1}>\sqrt{(\mu+1)(\mu+5)}$. Note that for $-0.95\leq \mu<-\frac{1}{2}$; $\sqrt{(\mu+1)(\mu+5)}\geq -(\mu+\frac{1}{2})$. Consequently, the interval $x\in (0, -(\mu+\frac{1}{2})]$ does not contain any positive zeros of $J_\mu(x)$. If $-1<\mu < -0.95$, then the interval contains at most one zero and at most one singularity of $h(x)$ by Theorem 2.1 of \cite{2003}. More specifically, $(\sqrt{(\mu+1)(\mu+5)},-(\mu+\frac{1}{2}))$  contains at most one zero and at most one singularity of $h(x)$. Thus $J_\mu(x)$ has at most one zero in $(\sqrt{(\mu+1)(\mu+5)},-(\mu+\frac{1}{2}))$.  In this situation, one can find $j_{\mu,1}$ as follows. Using the relation $0<j_{\mu, 1}<j_{\mu+1, 1},$ between zeros of $J_{\mu}(x)$ and $J_{\mu+1}(x)$ (see \cite[15.22, p. 479]{Wa22}) one can conclude that $h(x)<0$ in $(\sqrt{(\mu+1)(\mu+5)},j_{\mu,1})$. Thus, all the hypotheses of Theorem \ref{thm2} hold. Hence, $\sqrt{(\mu+1)(\mu+5)}$ is a suitable initial guess for the proposed third-order iterative procedure to find $j_{\mu,1}$. Note that $\Omega(x)=1+\dot{r}(x)-r^2(x)=1+\frac{\frac{1}{4}-\mu^2}{x^2}.$ Hence, for $-\frac{1}{2}\leq\mu<\frac{1}{2}$, $x\mapsto \Omega(x)$ is decreasing on $(0, \infty)$ and for $-1<\mu<-\frac{1}{2}$, $x \mapsto \Omega(x)$ is increasing on $(0, \infty)$. Thus, using Remark \ref{remark5}, \eqref{convex}, and \eqref{concave}, one can obtain a better initial guess, as discussed.

	\subsection{Cylinder function}\label{cylinder} In this subsection, using the proposed third-order iterative method, all the positive zeros of Cylinder function $C_{\mu}(x)$ of order $\mu \in \mathbb{R}$, in a given interval $[a',b']$ are obtained. Cylinder function of order $\mu$ is defined as follows (see \cite{EL98}): $$C_{\mu}(x)=J_{\mu}(x)\cos\alpha-Y_{\mu}(x)\sin\alpha, \hspace{3mm} 0\leq\alpha<\pi,$$ where $Y_{\mu}(x)$ is the Bessel function of second kind. For $k\in \mathbb{N}$, $z_{\mu,k}$ denotes the $k$-th positive zero of $C_{\mu}(x)$. Using the recurrence relation \cite[eq. 10.6.2]{OLB10}, one can verify that the pairs $(C_{\mu}(x),C_{\mu-1}(x))$ and $(C_{\mu}(x),C_{\mu+1}(x))$ satisfies the coupled differential equations \eqref{e1} and \eqref{e2}, respectively. 
	
	\textbf{Case 1 $\mu\geq\frac{1}{2}$:}
	For the choice $y(x)=C_{\mu}(x)$, $w(x)=C_{\mu-1}(x)$ and using the CDE \eqref{e1}, one can verify that all the conclusions of Theorem \ref{thm1} hold true in $(0, \infty)$. Consequently, $h(x)=\frac{C_{\mu}(x)}{C_{\mu-1}(x)}$ satisfies the Riccati equation \eqref{ricatti}, with $r(x)=\frac{\mu-\frac{1}{2}}{x}$. Note that for $\mu=\frac{1}{2}$, $r(x)\equiv 0$. Consequently, $h(x)=\tan (x+\alpha)$. Hence, the positive zeros of $C_{\frac{1}{2}}(x)$ are $n\pi-\alpha$, $n\in\mathbb{N}$. For $x\in (\mu-\frac{1}{2}, \infty)$ and $\mu>\frac{1}{2}$, it is easy to verify that $x\mapsto r(x)$ is decreasing and $0<r(x)<1$.  Let $[a',b'] \subset (\mu-\frac{1}{2},\infty)$ and  contain $k+1$-zeros. i.e., $z_{\mu,l} \in [a',b']$, $i \leq l \leq i+k$ for some $i\in \mathbb{N}$. To find all the zeros in $[a',b']$,  one can get the initial guesses from Table \ref{initial guess on [a', b']}. i.e., the process will start with finding the largest zero in this interval. If $h(b')= 0$, then the largest zero in this interval is $z_{\mu,i+k}=b'$ otherwise the initial guess for $z_{\mu,i+k}$ is $b'-\frac{\pi}{2}\left(\frac{1-sign(h(b'))}{2}\right)$. The rest of the zeros $z_{\mu,l}$ can be obtained using the initial guess $z_{\mu,l+1}-\frac{\pi}{2}$, $i\leq l \leq i+k-1$. Now consider the interval $(0, \mu-\frac{1}{2})$.
	By using \cite[Corollary 1]{c1} with Concluding remark in \cite{c1}, one can get $$z_{\mu, 1}>\mu ~~~~~\text{if}~~~~~ \hspace{1mm} 0\leq\alpha<\theta_1(\mu, \mu)$$ where $\theta_1(\mu, \mu)=\frac{\pi}{2}-\arctan\left(\frac{Y_{\mu}(\mu)}{J_{\mu}(\mu)}\right)$. Note that $\theta_1(\mu, \mu)>\frac{5\pi}{6}$ (see \cite[p. 1232]{c1}). Thus, for $0\leq\alpha<\theta_1(\mu, \mu)$, the interval $(0, \mu-\frac{1}{2})$ does not contain any zero of $C_{\mu}(x)$. Once again using the concluding remark of \cite{c1}, for $\mu\geq0$ and $\theta_1(\mu, \mu)\leq\alpha<\pi$; $C_{\mu}(x)$ has exactly one zero in the interval $(0, \mu]$. The case $z_{\mu, 1}\in (\mu-\frac{1}{2}, \mu]$ is part of $(\mu-\frac{1}{2},\infty)$ just discussed above. 
	
	If $z_{\mu,1}\in (0, \mu-\frac{1}{2}]$, then $z_{\mu, 1}$ can be obtained using the proposed iterative method \eqref{iter} in the following way. Note that the zeros of the cylinder function are increasing with respect to $\mu$ \cite[p. 68]{cylinder}. Consequently, we have $z_{\mu-1, 1}<z_{\mu, 1}$. Moreover, $h(z)<0$ in $(z_{\mu-1,1}, z_{\mu, 1})$. Using part \textit{ (1)} of Theorem \ref{thm2}, one can conclude that the proposed iterative method with any initial guess from $(z_{\mu-1,1}, z_{\mu, 1})$ converges to $z_{\mu, 1}$. Note that $\Omega(x)=1+\dot{r}(x)-r^2(x)=1-\left(\frac{\mu^2-\frac{1}{4}}{x^2}\right)$ and  $x \mapsto \Omega(x)$ is increasing on $(0, \infty)$. Hence, one can get a better initial guess using Remark \ref{remark5} and \eqref{concave}.
	
	

	\textbf{Case 2 $\mu<\frac{1}{2}$:} For the choice $y(x)=C_{\mu}(x)$, $w(x)=C_{\mu+1}(x)$ and using the CDE \eqref{e2}, one can get all conclusions of Theorem \ref{thm1}. Furthermore, $h(x)=\frac{\tilde{y}(x)}{\tilde{w}(x)}=-\frac{C_{\mu}(x)}{C_{\mu+1}(x)}$ satisfies the Riccati differential equation \eqref{ricatti}, with $r(x)=-\frac{\mu+\frac{1}{2}}{x}$. Note that for $\mu=-\frac{1}{2}$, $r(x) \equiv 0$. Consequently, $h(x)=-\cot(x+\alpha)$. Hence, the positive zeros of $C_{-\frac{1}{2}}(x)$ are $(n+\frac{1}{2})\pi-\alpha$, $n\in \mathbb{N}$. We will handle the rest of the situations in two sub-cases.

	\textbf{Sub case 2(a) $\mu<-\frac{1}{2}$:} For $x \in (-(\mu+\frac{1}{2}), \infty)$ and $\mu<-\frac{1}{2}$, it is easy to verify that $x \mapsto r(x)$ is decreasing and $0<r(x)<1$. Let $[a',b'] \subset (-(\mu+\frac{1}{2}),\infty)$ contain $k+1$-zeros. i.e., $z_{\mu,l} \in [a',b']$, $i \leq l \leq i+k$ for some $i\in \mathbb{N}$. To find all zeros in $[a',b']$,  one can get the initial guesses from Table \ref{initial guess on [a', b']}. i.e., the process will start with finding the largest zero in this interval. If $h(b')= 0$, then the largest zero in this interval is $z_{\mu,i+k}=b'$ otherwise the initial guess for $z_{\mu,i+k}$ is $b'-\frac{\pi}{2}\left(\frac{1-sign(h(b'))}{2}\right)$. The rest of the zeros $z_{\mu,l}$ can be obtained using the initial guess $z_{\mu,l+1}-\frac{\pi}{2}$, $i\leq l \leq i+k-1$. Now consider the interval $(0, -(\mu+\frac{1}{2}))$. In view of \cite[Theorem 2.1]{2003}, one can conclude that the interval $(0, -(\mu+\frac{1}{2})]$ contains at most one zero and at most one singularity of $h(x)$.  
	
	If $z_{\mu,1}\in (0, -(\mu+\frac{1}{2})]$, then $z_{\mu, 1}$ can be obtained using the proposed iterative method \eqref{iter} in the following way. Note that zeros of the cylinder function are increasing with respect to $\mu$ \cite[p. 68]{cylinder}. Consequently, we have $z_{\mu, 1}<z_{\mu+1, 1}$. Hence, the interval $(0, z_{\mu, 1})$ does not contain any singularity of $h(z)$.  Now using part \textit{(1)} of Theorem \ref{thm2}, one can conclude that the proposed iterative method with any initial guess from  $(0, z_{\mu, 1})$ converges to $z_{\mu, 1}$.
	
	
	
	\textbf{Sub case 2(b) $-\frac{1}{2}<\mu<\frac{1}{2}$:} For $x\in (0, \infty)$, $r(x)$ is negative and $x \mapsto r(x)$ is increasing. From \cite[p. 490]{Wa22}, all the positive zeros $z_{\mu,k}$ of the cylinder function satisfy
	\begin{equation*}
		(k-1)\pi+\frac{3}{4}\pi+\frac{\mu\pi}{2}-\alpha<z_{\mu, k}<(k-1)\pi+\pi-\alpha, \hspace{2mm}k \in \mathbb{N}.
	\end{equation*} 
	Replacing $k=2$, in the above inequality, one gets $z_{\mu, 2}>\frac{3}{4}\pi+\frac{\mu\pi}{2}+\pi-\alpha>\frac{3}{4}\pi+\frac{\mu\pi}{2}.$ Thus, the interval $\left(\frac{3}{4}\pi+\frac{\mu\pi}{2}, \infty\right)$ contains all positive zeros of $C_{\mu}(x)$ except $z_{\mu, 1}$. For $x\in \left(\frac{3}{4}\pi+\frac{\mu\pi}{2}, \infty\right)$ one can verify that $$r(x)>-\frac{\mu+\frac{1}{2}}{\frac{\pi}{2}(\mu+\frac{3}{2})} ~~~~ \text{and}~~~ \dot{r}(x)<\frac{\mu+\frac{1}{2}}{\frac{\pi^2}{4}(\mu+\frac{3}{2})^2}.$$ For the choice $k_2=\frac{\mu+\frac{1}{2}}{\frac{\pi}{2}(\mu+\frac{3}{2})}$ and $k_1=\frac{\mu+\frac{1}{2}}{\frac{\pi^2}{4}(\mu+\frac{3}{2})^2}$, one can get $8k_2^2+6k_1-3=\frac{4(\mu+\frac{1}{2})}{\pi^2(\mu+\frac{3}{2})^2}\left[8\left(\mu+\frac{1}{2}\right)+6\right]-3.$ For $\mu \in [0, \frac{1}{2})$, one gets $8k_2^2+6k_1-3<\frac{16}{9\pi^2}(8+6)-3<0$. Similarly, for $\mu \in (-\frac{1}{2}, 0)$, one gets $8k_2^2+6k_1-3<\frac{2}{\pi^2}(4+6)-3<0.$
	Let $[a',b'] \subset (\frac{3}{4}\pi+\frac{\mu\pi}{2},\infty)$ contain $k+1$-zeros. i.e., $z_{\mu,l} \in [a',b']$, $i \leq l \leq i+k$ for some $i\in \mathbb{N}$. To find all zeros in $[a',b']$, one can get the initial guesses from Table \ref{initial guess on [a', b']}. i.e., the process will begin by finding the smallest zero within this interval. If $h(a')= 0$, then the smallest zero in this interval is $z_{\mu,i}=a'$ otherwise the initial guess for $z_{\mu,i}$ is $a'+\frac{\pi}{2}\left(\frac{1+sign(h(a'))}{2}\right)$. The rest of the zeros $z_{\mu,l}$ can be obtained using the initial guess $z_{\mu,l-1}+\frac{\pi}{2}$, $i+1\leq l \leq i+k$.

	Note that $\Omega(x)=1+\dot{r}(x)-r^2(x)=1+\left(\frac{-\mu^2+\frac{1}{4}}{x^2}\right).$ It is easy to observe that for $\mu \in \left(-\frac{1}{2}, \frac{1}{2}\right)$, $x \mapsto \Omega(x)$ is decreasing on $(0, \infty)$. For $\mu \leq -\frac{1}{2}$, $x \mapsto \Omega(x)$ is increasing in $(0, \infty)$. Thus, in both the above sub-cases, using Remark \ref{remark5}, \eqref{concave}, and \eqref{convex}, one can get a better initial guess.

	\subsection{Confluent hypergeometric function} In this subsection, for $a<-1$ and $b>0$ all the zeros of a given Confluent hypergeometric function $M(a,b,x)$ are obtained using the proposed third-order iterative procedure. It is well-known that the Confluent hypergeometric function $M(a,b,x)$, is a solution of the second order linear differential equation(see \cite[eq. 13.2.1]{OLB10}) $$xy''(x)+(b-x)y'(x)-ay(x)=0.$$ 
	By using the recurrence relations \cite[13.3.19]{OLB10} and \cite[13.3.17]{OLB10}, one can obtain the following coupled differential equation
	\begin{equation*}
		\left\{ \begin{aligned}
			\dfrac{{\rm d}}{{\rm d}x}M(a, b, x)&=\left(1-\dfrac{b-a}{x}\right)M(a, b, x)+ \dfrac{b-a}{x}M(a-1, b, x)\\
			\dfrac{{\rm d}}{{\rm d}x}M(a-1, b, x)&=-\dfrac{a-1}{x}M(a-1, b, x)+\dfrac{a-1}{x}M(a, b, x).
		\end{aligned} \right.
	\end{equation*}
	It is interesting to note that Corollary 3.2 in \cite{Shafique}, ensure all the zeros of $M(a,b,x)$ lie on the interval $x_I=(b-2a-2\sqrt{a(a-b)-b}, b-2a+2\sqrt{a(a-b)-b})$ for $a<-1$ and $b>0$. For the choice $y(x)=M(a, b, x)$ and $w(x)=M(a-1, b, x)$, all the conclusions of Theorem \ref{thm1} hold true in $(0, \infty)$. Define $\kappa = \sqrt{\frac{1-a}{b-a}}$. One can verify that $z(x)=\kappa(b-a)\log(x)$ and $x(z)=e^{\dfrac{z}{\kappa(b-a)}}$ are the corresponding transformations discussed in Section \ref{2}. Hence, $h(z)=\dfrac{\tilde{y}(x(z))}{\tilde{w}(x(z))}=\kappa\frac{M(a,b,x(z))}{M(a-1,b,x(z))}$ satisfies the Riccati differential equation \eqref{ricatti}, with $r(z)=\dfrac{1-2a+b-e^{\frac{z}{\kappa(b-a)}}}{2\kappa(b-a)}$. Consequently, all the zero of $h(z)$ lie in the interval $z_I=(\kappa(b-a)\log(b-2a-2\sqrt{a(a-b)-b}, \kappa(b-a)\log(b-2a+2\sqrt{a(a-b)-b}))$. Note that the only zero of $r(z)$ is $z_r=\kappa(b-a)\log(1-2a+b)$. In addition, $z \mapsto r(z)$ is decreasing in $\mathbb{R}$ and $|r(z)|<1$ in $z_{I^*} = \left(\kappa(b-a)\log\left((\sqrt{1-a} - \sqrt{b-a})^2\right),\hspace{2mm} \kappa(b-a)\log(1 - 2a + b + 2k(b-a))\right)$. One can verify the following facts easily
	\begin{eqnarray}\label{inequ}
		\log(b-2a+2\sqrt{a(a-b)-b}) &\leq&  \log(1 - 2a + b + 2\kappa(b-a)); \quad a<-1 ~~\text{and} ~~b>0.\\
		\log\left((\sqrt{1-a} - \sqrt{b-a})^2\right) &\leq & \log(b-2a-2\sqrt{a(a-b)-b}); \quad a<-1 ~~\text{and} ~~b\geq \frac{1}{6}.\nonumber\\
	\end{eqnarray}
	Hence, $z_{I} \subseteq z_{I^*}$ for $a<-1$ and $b\geq \frac{1}{6}$. Clearly, $z_r \in z_{I^*}$. Let $z_{y_i}$, $1\leq i\leq n$ are the zeros of $h(z)$ in $z_{I^*}$ and $z_{y_j}\leq z_r <z_{y_{j+1}}$, $j \in \{1, 2, \dots, n-1\}$. To find all the zeros in $z_{I^*}$, one can get the initial guesses from Table \ref{on I' when r has zero}. i.e. Suppose $z_r$ is a singular point of $h(z)$. Then, $z_r-\frac{\pi}{2}$ and $z_r+\frac{\pi}{2}$ are initial guesses for $z_{y_j}$ and $z_{y_{j+1}}$, respectively using Lemma \ref{lemma2}. Suppose $z_r$ is not a singular point of $h(z)$. Then, the initial guesses for $z_{y_j}$ and $z_{y_{j+1}}$ are obtained in the following ways. 
	\begin{itemize}
		\item If $h(z_r)< 0$, then $z_r$ and $z_r-\frac{\pi}{2}$ are  suitable initial guesses for $z_{y_{j+1}}$ and $z_{y_j}$, respectively.
		\item If $h(z_r)\geq 0$, then $z_r$ and $z_r+\frac{\pi}{2}$ are suitable initial guesses for $z_{y_j}$ and $z_{y_{j+1}}$, respectively.
	\end{itemize}
	The rest of the zeros $z_{y_{k}}> z_{y_{j+1}}$  can be obtained using the initial guesses $z_{y_{k-1}}+\frac{\pi}{2}$, $j+2\leq k \leq n$ and the zeros $z_{y_s} < z_{y_j}$,  can be obtained using the initial guesses $z_{y_{s+1}}-\frac{\pi}{2}$, $1\leq s \leq j-1$.
	
	Let $0<b < \frac{1}{6}$ and $a<-1$. Then the interval $[\kappa(b-a)\log(1-2a+b+2\kappa(b-a)), \infty)$ does not have any zero. Moreover, $|r(z)|\geq 1$ on $(-\infty, \kappa(b-a)\log\left((\sqrt{1-a} - \sqrt{b-a})^2\right)]$. Hence, \cite[Theorem 2.1]{2003}, ensures that the interval $(-\infty, \kappa(b-a)\log((\sqrt{1-a}-\sqrt{b-a})^2)]$  contains at most one zero and at most one singularity of $h(z)$. Hence, the remaining zeros are in $z_{I^*}$.
	
	Note that if $\log((\sqrt{1-a}-\sqrt{b-a})^2) \leq \log(b-2a-2\sqrt{a(a-b)-b}$, then the interval $(-\infty, \kappa(b-a)\log\left((\sqrt{1-a} - \sqrt{b-a})^2\right)]$ does not contain any zero. i.e. all the zeros of $h(z)$ lie in $z_{I^*}$. Hence, as discussed above, one can find all the zeros of $h(z)$.  
	
	If $\log((\sqrt{1-a}-\sqrt{b-a})^2) > \log(b-2a-2\sqrt{a(a-b)-b}$, then one zero may be in the interval $(\kappa (b-a)\log(b-2a-2\sqrt{a(a-b)-b}, \kappa (b-a)\log((\sqrt{1-a}-\sqrt{b-a})^2)$.  If  $z_{y_1} \in (\kappa (b-a)\log(b-2a-2\sqrt{a(a-b)-b}, \kappa (b-a)\log((\sqrt{1-a}-\sqrt{b-a})^2)$ one can find $z_{y_1}$ as follows: Let $\zeta=\kappa(b-a)\log(b-2a-2\sqrt{a(a-b)-b})$.
	
	\textbf{$\bullet$}
	First consider the case $\underset{z\rightarrow \zeta^+}{\lim}h(z)<0$. Using $r(z)>0$ and the Riccati differential equation \eqref{ricatti}, one can conclude that $(\zeta, z_{y_1})$ does not contain any singularity of $h(z)$. Then the proposed third-order iterative method with any initial guess from $(\zeta, z_{y_1})$ converges to $z_{y_1}$ by part \textit{(1)} of Theorem \ref{thm2}.
	
	\textbf{$\bullet$} Now consider the case $\underset{z\rightarrow \zeta^+}{\lim}h(z)>0$. The relation $h'(z_{y_1})=1$ ensures that $(\zeta , z_{y_1})$ contains exactly one singularity $z_w$ and $h(z)<0$ on $(z_w, z_{y_1})$. Then the proposed third-order iterative method with any initial guess from $(z_w, z_{y_1})$ converges to $z_{y_1}$ by part \textit{(1)} of Theorem \ref{thm2}.

	\subsection{Coulomb wave function}
	In this subsection, using the proposed third-order iterative procedure, all positive zeros of the Coulomb wave function $F_{L}(\eta, x)$, in a given interval $[a', b']$ are obtained for $L>0$, $\eta \in \mathbb{R}$. The Coulomb wave function is a solution of the second-order linear differential equation \cite[eq. 33.2.1]{OLB10}$$\dfrac{{\rm d}^2u}{{\rm d}x^2}+\left(1-\frac{2\eta}{x}-\frac{L(L+1)}{x^2}\right)u=0.$$  The interesting relation $F_{L}(\eta, x)=C_{L}(\eta)x^{L+1}e^{-ix}{}_1F_1(L+1-\mathrm{i}\eta, 2L+2, 2\mathrm{i}x),$  $C_{L}(\eta)=\dfrac{2^{L}e^{-\frac{\pi\eta}{2}}|\Gamma(L+1+\mathrm{i}\eta)|}{\Gamma(2L+2)}$ is available in literature (see \cite{Baricz}).
	By using the recurrence relations \cite[p. 539, eqs. 14.2.1 \& 14.2.2]{AS72}, one can get the following coupled differential equation
	\begin{equation*}
		\left\{ \begin{aligned}
			\dfrac{{\rm d}}{{\rm d}x}F_{L}(\eta, x)&=-\frac{1}{L}\left(\frac{L^2}{x}+\eta\right)F_{L}(\eta, x)+\frac{\sqrt{L^2+\eta^2}}{L}F_{L-1}(\eta, x)\\
			\dfrac{{\rm d}}{{\rm d}x}F_{L-1}(\eta, x)&=\frac{1}{L}\left(\frac{L^2}{x}+\eta\right)F_{L-1}(\eta, x)-\frac{\sqrt{L^2+\eta^2}}{L}F_{L}(\eta, x).
		\end{aligned}\right.
	\end{equation*}
	For the choice $y(x)=F_L(\eta, x)$ and $w(x)=F_{L-1}(\eta, x)$, all the conclusions of Theorem \ref{thm1} hold true in $(0, \infty)$. One can verify that $z(x)=\dfrac{\sqrt{L^2+\eta^2}}{L}x$ and $x(z)=\dfrac{Lz}{\sqrt{L^2+\eta^2}}$ are the transformations discussed in Section \ref{2}. Hence, $h(z)=\dfrac{F_L(\eta, x(z))}{F_{L-1}(\eta, x(z))}$ will satisfy the Riccati ODE \eqref{ricatti}, with $r(z)=\dfrac{L}{Z}+\dfrac{\eta}{\sqrt{L^2+\eta^2}}$.   Note that $z \mapsto r(z)$ is decreasing in $(0, \infty)$ and $z_r=-\dfrac{L\sqrt{L^2+\eta^2}}{\eta}$ is the only zero of $r(z)$. Furthermore, one can verify that 
	$\left| r(z)\right|<1 \hspace{3mm}\forall\hspace{2mm}z \in \left(\dfrac{L\sqrt{L^2+\eta^2}}{\sqrt{L^2+\eta^2}-\eta},\, \infty\right)$.
	
	Now, for $\eta<0$, we have $z_r>0$. Consequently, one can verify that $r(z)<0$ $\hspace{2mm}\forall\hspace{2mm}z \in (z_r, \infty)$ and $r(z)>0$ $\hspace{2mm}\forall\hspace{2mm}z \in (0, z_r)$. Note that $z_r \in \left(\dfrac{L\sqrt{L^2+\eta^2}}{\sqrt{L^2+\eta^2}-\eta}, \infty\right)$. Let $[a', b'] \subset \left(\dfrac{L\sqrt{L^2+\eta^2}}{\sqrt{L^2+\eta^2}-\eta}, \infty\right)$ contain $k+1$-zeros. i.e., $z_{y_l} \in [a',b']$, $i \leq l \leq i+k$ for some $i\in \mathbb{N}$. One can get the  initial guess for zeros $[a', b']$ in the following way:
	
	\textbf{$\bullet$} Let $b'\leq z_r$. Then, one can get the initial guesses from Table \ref{initial guess on [a', b']}. i.e., the process will start with finding the largest zero in this interval. If $h(b')= 0$, then the largest zero in this interval is $z_{y_{i+k}}=b'$ otherwise the initial guess for $z_{y_{i+k}}$ is $b'-\frac{\pi}{2}\left(\frac{1-sign(h(b'))}{2}\right)$. The rest of the zeros $z_{y_l}$ can be obtained using the initial guess $z_{y_{l+1}}-\frac{\pi}{2}$, $i\leq l \leq i+k-1$.
	
	\textbf{$\bullet$} Let $z_r\leq a'$. Then, one can get the initial guesses from Table \ref{initial guess on [a', b']}. i.e., the process will start with finding the smallest zero in this interval. If $h(a')= 0$, then the smallest zero in this interval is $z_{y_i}=a'$ otherwise the initial guess for $z_{y_i}$ is $a'+\frac{\pi}{2}\left(\frac{1+sign(h(a'))}{2}\right)$. The rest of the zeros $z_{y_l}$ can be obtained using the initial guess $z_{y_{l-1}}+\frac{\pi}{2}$, $i+1\leq l \leq i+k$.
	
	\textbf{$\bullet$} Let $a'<z_r<b'$, and $z_{y_l}$, $i\leq l \leq i+k$ be the largest zero smaller than $z_r$. If $z_r$ is a singular point for $h(z)$, then $z_r-\frac{\pi}{2}$ and $z_r+\frac{\pi}{2}$ are the suitable initial guesses for finding $z_{y_l}$ and $z_{y_{l+1}}$ respectively. If $h(z_r)<0$, then $z_r-\frac{\pi}{2}$ and $z_r$ are suitable initial guesses for  $z_{y_l}$ and $z_{y_{l+1}}$ respectively. If $h(z_r)\geq 0$, then $z_r$ and $z_r+\frac{\pi}{2}$ are suitable initial guesses for $z_{y_l}$ and $z_{y_{l+1}}$, respectively. The rest of the zeros $z_{y_j}>z_{y_{l+1}}$, can be obtained using the initial guesses $z_{y_{j-1}}+\frac{\pi}{2}$, $l+2\leq j \leq i+k$ and the zeros $z_{y_j}<z_{y_l}$, can be obtained using the initial guesses $z_{y_{j+1}}-\frac{\pi}{2}$, $i \leq j \leq l-1$ using Table \ref{on I' when r has zero}.

	For $\eta \geq 0$, one can verify that $r(z)>0$ $\hspace{2mm}\forall\hspace{2mm}z \in (0, \infty)$. Let $[a',b'] \subset \left(\dfrac{L\sqrt{L^2+\eta^2}}{\sqrt{L^2+\eta^2}-\eta}, \infty\right)$ contain $k+1$-zeros. i.e., $z_{y_l} \in [a',b']$, $i \leq l \leq i+k$ for some $i\in \mathbb{N}$. To find all zeros in $[a',b']$, one can get the initial guesses from Table \ref{initial guess on [a', b']}. i.e., the process will start with finding the largest zero in this interval. If $h(b')= 0$, then the largest zero in this interval is $z_{y_{i+k}}=b'$ otherwise the initial guess for $z_{y_{i+k}}$ is $b'-\frac{\pi}{2}\left(\frac{1-sign(h(b'))}{2}\right)$. The rest of the zeros $z_{y_l}$ can be obtained using the initial guess $z_{y_{l+1}}-\frac{\pi}{2}$, $i\leq l \leq i+k-1$.
	
	Note that $\left| r(z) \right|>1 \hspace{3mm}\forall\hspace{2mm}z \in \left(0,\hspace{2mm} \dfrac{L\sqrt{L^2+\eta^2}}{\sqrt{L^2+\eta^2}-\eta}\right).$ Hence, \cite[Theorem 2.1]{2003}, ensures that the interval $\left(0,\hspace{2mm} \dfrac{L\sqrt{L^2+\eta^2}}{\sqrt{L^2+\eta^2}-\eta}\right)$  contains at most one zero and at most one singularity of $h(z)$. Note that $r(z)>0$ in $\left(0,\hspace{2mm} \dfrac{L\sqrt{L^2+\eta^2}}{\sqrt{L^2+\eta^2}-\eta}\right)$. If $z_{y_1} \in \left(0,\hspace{2mm} \dfrac{L\sqrt{L^2+\eta^2}}{\sqrt{L^2+\eta^2}-\eta}\right)$ one can find $z_{y_1}$ as follows:
	
	\textbf{$\bullet$} First consider the case $\underset{z\rightarrow 0^-}{\lim}h(z)<0$. Using $r(z)>0$ and the Riccati differential equation \eqref{ricatti}, one can conclude that $(0, z_{y_1})$ does not contain any singularity of $h(z)$. Then the proposed iterative procedure with any initial values from $(0, z_{y_1})$ converges to $z_{y_1}$, by part \textit{(1)} of Theorem \ref{thm2}.
	
	\textbf{$\bullet$} Now consider the case $\underset{z\rightarrow 0^-}{\lim}h(z)>0$. The relation $h'(z_{y_1})=1$ ensures that $(0, z_{y_1})$ contains exactly one singularity $z_{w_1}$ and $h(z)>0$ in $(z_{w_1}, z_{y_1})$. Then the proposed iterative procedure with any initial values from $(z_{w_1}, z_{y_1})$ converges to $z_{y_1}$, by part \textit{(1)} of Theorem \ref{thm2}.
	
    \section{Numerical results} \label{numerical results}

       In this section, the performance of the proposed third-order iterative method \eqref{iter} is demonstrated by various numerical simulations. More specifically, the performance of the proposed third-order method (TOM) is compared with the fourth-order method for Legendre polynomial (FOM-L) \cite{seg11}, second-order method for Legendre polynomial (SOM-L) \cite{2002}, fourth-order method for Hermite polynomial (FOM-H) \cite{seg1}, asymptotic method (ASY) \cite{alex1} for Hermite polynomial, second-order method for Hermite polynomial (SOM-H)\cite{2002}, fourth-order method for Bessel function (FOM-B) \cite{2010}, second-order method for Bessel function (SOM-B) \cite{2002}, modified Newton method for Bessel function (MNM-B) \cite{seg2},  fourth-order method for Cylinder function (FOM-C) \cite{2010}, second-order method for Cylinder function (SOM-C) \cite{2002}, and modified Newton method for Cylinder function (MNM-C) \cite{seg2}. A. Time denotes the average CPU run time (in seconds) obtained by executing the corresponding algorithm ten times and reporting the mean of the measured runtimes. T. Iter denotes the total number of iterations taken by the corresponding algorithm to perform the complete task.  All the codes are available in \cite{code}. All numerical experiments were performed using MATLAB R2024b (64-bit) on a 64-bit Windows PC equipped with a 12th Gen Intel Core i7 processor and 16 GB RAM.

    We follow the approach in \cite{seg1,seg11} to calculate relative error. Let $x_1^{(d)}, x_2^{(d)}, \dots, x_k^{(d)}$ denote the computed zeros of the function $y(x)$ using double precision arithmetic, and let $x_1^{(q)}, x_2^{(q)}, \dots, x_k^{(q)}$ denote the corresponding zeros computed using extended precision arithmetic. The relative error (RE) for each zero is defined as
    \begin{equation}\label{error1}
     RE ~\mbox{at}~ x_i = \left|1 - \frac{x_i^{(d)}}{x_i^{(q)}}\right|,~~~ i=1,2,\cdots k.
    \end{equation}

     \subsection{Legendre polynomial}
     In this section, the proposed method is used to find all the zeros of $P_n(x)$ and compared with methods FOM-L \cite{seg11}, SOM-L and \cite{2002}. First, we present the implementation procedure for the proposed third-order iterative method. Using the procedure discussed in Section \ref{Legendre} with initial guess $\frac{\pi}{2}$, one can get all the zeros of $P_n(x)$ by employing the proposed third-order iterative method 
     \begin{equation}\label{leg-it}
     z_{k+1}=z_k-\frac{2h(z_k)}{2+h^2(z_k)+2\tanh(\frac{z_k}{n+1})h(z_k)},
     \end{equation}
     where $h(z)=t(x(z))=-\frac{P_n(x(z))}{P_{n+1}(x(z))}$, $x(z)=\tanh(\frac{z}{n+1})$. Expressing the iterative method \eqref{leg-it} in terms of $x$ one can get
     \begin{align}
     \tanh^{-1}(x_{k+1})&=\tanh^{-1}(x_k)-\frac{2t(x_k)}{(n+1)(2+t^2(x_k)+2x_kt(x_k))} \nonumber\\
     x_{k+1}&=\frac{x_k-\tanh(b(x_k))}{1-x_k\tanh(b(x_k))}, \label{leg_iter}
     \end{align}
     where $ b(x)=\frac{2t(x)}{(n+1)(2+t^2(x)+2xt(x))}.$
     Consequently, during the implementation, one has to evaluate $\frac{P_n(x)}{P_{n+1}(x)}$ at each step of the iterative procedure. The local Taylor series method discussed in \cite{seg11, i1} is used for evaluating the ration $\frac{P_n(x)}{P_{n+1}(x)}$. Let $y(x)=\lambda\sqrt{1-x^2}P_n(x)$, where $\lambda$ is a normalization constant. Consequently, $\frac{P_n(x)}{P_{n+1}(x)}=\frac{(n+1)y(x)}{nxy(x)-(1-x^2)y'(x)}$. Hence, $t(x)=\frac{(n+1)y(x)}{(1-x^2)y'(x)-nxy(x)}$. To calculate $y(x)$ and $y'(x)$ we use the following truncated Taylor series with suitable center $\delta$
     \begin{equation}\label{taylor-2}
      y(x)=\sum_{k=0}^{N}a_k(\delta)(x-\delta)^k, \hspace{2mm} y'(x)=\sum_{k=0}^{N}(k+1)a_{k+1}(\delta)(x-\delta)^k
     \end{equation}
     where $a_k(\delta)=\frac{y^{(k)}(\delta)}{k!}$. During the numerical simulation, we set $N=\min\{100,N_0\}$ where $N_0$ is the least integer satisfying $\zeta_{N_0}(x)=\max\left(\left|\dfrac{a_{N_0}(\delta)(x-\delta)^{N_0}}{\sum_{k=0}^{N_0}a_k(\delta)(x-\delta)^k}\right|, \left|\dfrac{(N_0+1)a_{N_0+1}(\delta)(x-\delta)^{N_0}}{\sum_{k=0}^{N_0}(k+1)a_{k+1}(\delta)(x-\delta)^k}\right|\right)<10^{-19}.$ Let $Q(x)=4(1-x^2)^2$, and $R(x)=(4n^2+4n)(1-x^2)+4$. Note that $y$ satisfies the following equations (see \cite[eq. 26]{seg11})
     \begin{eqnarray}
      \label{L-Ev-D1}
     &&2Q(x)a_2(x)+R(x)a_0(x)=0\\
     &&6Q(x)a_3(x)+2Q'(x)a_2(x)+R(x)a_1(x)+R'(x)a_0(x)=0\\
     &&(k+2)(k+1)Q(x)a_{k+2}(x)+(k+1)kQ'(x)a_{k+1}(x)+\left(\frac{k(k-1)}{2}Q''(x)+R(x)\right)a_k(x) \nonumber\\
     \label{L-Ev-RR}
    &&+\left(\frac{(k-1)(k-2)}{6}Q'''(x)+R'(x)\right)a_{k-1}(x) \\
    &&+\frac{1}{2}\left(\frac{(k-2)(k-3)}{12}Q^{(4)}(x)+R''(x)\right)a_{k-2}(x)=0, \hspace{5mm} k=2,3,\cdots .\nonumber
    \end{eqnarray}
    Thus if we know $a_0(\delta)$ and $a_1(\delta)$, using \eqref{L-Ev-D1}--\eqref{L-Ev-RR}, one can easily find $a_k(\delta)$ for all $k\geq 2$. Now, choose $\lambda=\frac{1}{P_n'(0)}$ if $n$ is odd. Subsequently, $y(0)=0$ and $y'(0)=1$. Similarly, if $n$ is even, choose $\lambda=\frac{1}{P_n(0)}$. Subsequently, $y(0)=1$ and $y'(0)=0$. 

Let $x_0^1$ be the initial guess for the first positive zero $x^1$ of $P_n(x)$. To get the first iterate $x_1^1$, $y(x_0^1)$ and $y'(x_0^1)$ can be calculated using eq. \eqref{taylor-2} with center $\delta=0$. Let $m \in \mathbb{N}$. Note that, at the end of $m^{\mbox{th}}$ iteration the following values $x_m^1$, $y(x_{m-1}^1)$ and $y'(x_{m-1}')$ are readily available. To get the $m+1^{\mbox{th}}$ iterate $x_{m+1}'$, $y(x_m^1)$ and $y'(x_m^1)$ can be calculated using eq. \eqref{taylor-2} with center $\delta=x_{m-1}^1$. The following stopping criteria $\left|\frac{x_{m+1}^1-x_m^1}{x_m^1}\right|\leq 10^{-15}$ is used to stop this iterative procedure. To compute the second positive zero $x^2$, the initial guess is taken as $x_0^2=x_{m+1}^1+\tanh(\frac{\pi}{2(n+1)})$. To get the first iterate $x_1^2$, $y(x_0^2)$ and $y'(x_0^2)$ can be calculated using eq. \eqref{taylor-2} with center $\delta=x_m^1$ and repeat the above procedure. Continuing this process, one can get all remaining positive zeros of $y(x)$. Furthermore, as mentioned in Remark \ref{remark5}, one can use improved initial approximations for all but the first three zeros.  

Now, the proposed method TOM is compared with FOM-L \cite{seg11} and SOM-L \cite{2002} for finding zeros of the Legendre polynomials. The MATLAB codes used for this comparison study are based on the code available on the website \footnote{https://personales.unican.es/segurajj/gaussian.html}. The modified version of this code for finding zeros of the Legendre polynomials is available in \cite{code}. It is worth mentioning that the implementation procedure of TOM, FOM-L, and SOM-L depends on the evaluation of ratios involving suitable Legendre polynomials. All these ratios are evaluated using the local Taylor series method discussed in \cite{seg11, i1}.

Figure \ref{RE_P} presents the relative error \eqref{error1} at the zeros of $P_{10000}(x)$ computed using FOM-L, SOM-L, and TOM. From the figure, it is clear that the proposed method TOM provides good accuracy.
 
    Figure \ref{CPU_P_n} shows the average of the CPU times for the methods FOM-L\cite{seg11}, SOM-L\cite{2002}, and the proposed method TOM for finding all the zeros of $P_n(x)$ for $n \in (10^6, 13\times10^{5})$. More specifically, all the zeros of fifteen polynomials of degree varying from $10^{6}$ to $13\times10^{5}$ obtained using FOM-L, SOM-L and TOM. For each polynomial, every method was executed ten times, and the average CPU time was recorded. From Figure \ref{CPU_P_n}, it is clear that the proposed method TOM is faster than the other methods for these polynomials.

    Table~\ref{table_comparison_legendre} presents the total number of iterations and the average time taken by FOM-L \cite{seg11}, SOM-L \cite{2002}, and the proposed method TOM \eqref{iter} for computing all the zeros of selected Legendre polynomials of various degrees in the range $[10^{6}, 13\times10^{5}]$. For this problem, the iterative methods TOM and FOM-L \cite{seg11} take the following form 
    \begin{equation}
        x_{k+1}=\frac{x_k-\tanh(\phi(x_k))}{1-x_k\tanh(\phi(x_k))}
    \end{equation}
    with the suitable choice of $\phi(x)$. More specifically, for the choice $\phi(x)=\frac{2t(x)}{(n+1)(2+t^2(x)+2xt(x))}$, $t(x)=\frac{(n+1)y(x)}{(1-x^2)y'(x)-nxy(x)}$ and $\phi(x)=\frac{1}{\sqrt{\mathcal{A}(x)}}\arctan(\sqrt{\mathcal{A}(x)}t_1(x))$, $\mathcal{A}(x)=n(n+1)(1-x^2)$, $t_1(x)=\frac{y(x)}{(1-x^2)y'(x)+xy(x)}$, one can get TOM and FOM-L, respectively. From Table \ref{table_comparison_legendre}, it can be seen that, although the proposed third-order method, TOM, requires more iterations than the fourth-order method FOM-L \cite{seg11}, TOM takes slightly less time to complete the same task than FOM-L. This may be due to the computational complexity involved in repeatedly evaluating these functions a large number of times.
    
\begin{figure}[htbp]
\centering
	\begin{subfigure}{0.48\textwidth}
    \centering
    \includegraphics[width=\linewidth]{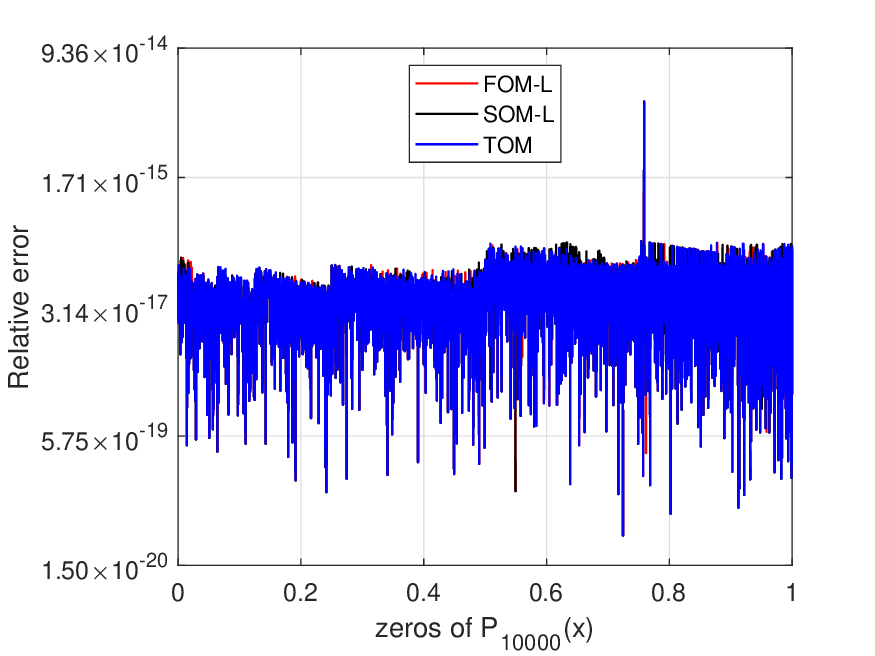}
    \caption{Relative error \eqref{error1} at zeros of $P_{10000}(x)$}
    \label{RE_P}
\end{subfigure}\hfill
\begin{subfigure}{0.48\textwidth}
    \centering
    \includegraphics[width=\linewidth]{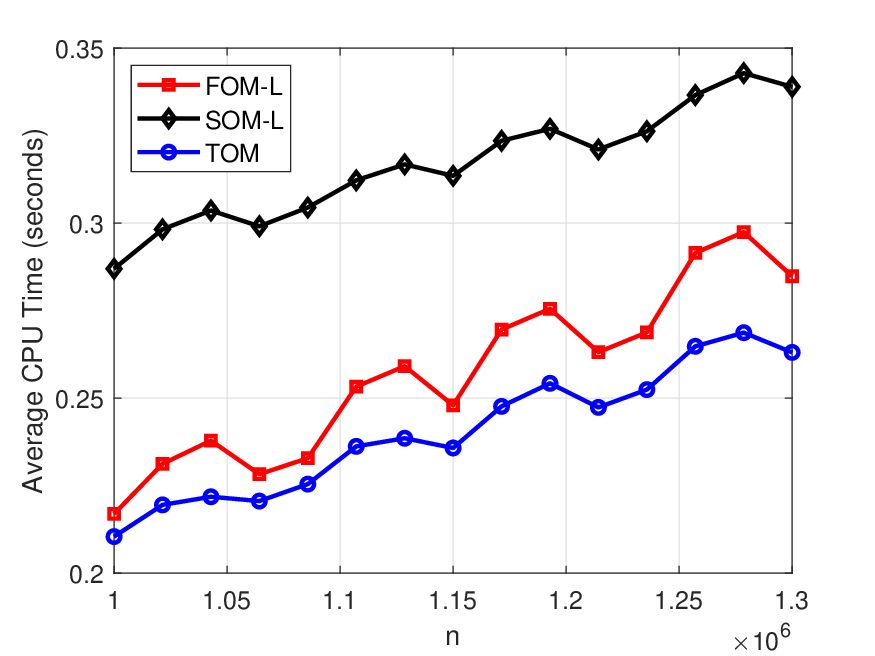}
    \caption{Average CPU time}
    \label{CPU_P_n}
\end{subfigure}

\caption{FOM-L \cite{seg11} vs SOM-L \cite{2002} vs TOM \eqref{iter} in finding all zeros of $P_n(x)$.}
\end{figure}

     \begin{table}[h]
\centering
\small
\setlength{\tabcolsep}{4pt}
\renewcommand{\arraystretch}{1.1}

\begin{tabular}{|c|cc|cc|cc|}
\hline
\multirow{2}{*}{\textbf{$n$}}
& \multicolumn{2}{c|}{\textbf{FOM-L} \cite{seg11}}
& \multicolumn{2}{c|}{\textbf{SOM-L} \cite{2002}}
& \multicolumn{2}{c|}{\textbf{Proposed-TOM} \eqref{iter}} \\ \cline{2-7}

& \textbf{T. Iter} & \textbf{A. Time}
& \textbf{T. Iter} & \textbf{A. Time}
& \textbf{T. Iter} & \textbf{A. Time} \\ \hline

1000000 & 1000004 & 0.2110 & 1373694 & 0.2810 & 1130692 & 0.2053 \\
1050000 & 1050004 & 0.2157 & 1419737 & 0.2850 & 1173888 & 0.2108 \\
1100000 & 1100004 & 0.2264 & 1470889 & 0.2947 & 1229188 & 0.2208 \\
1200000 & 1200004 & 0.2484 & 1560124 & 0.3121 & 1321758 & 0.2402 \\
1300000 & 1300003 & 0.2734 & 1650345 & 0.3260 & 1417543 & 0.2540 \\ \hline
\end{tabular}

\caption{Iteration count and average time to compute all zeros of $P_n(x)$ for different values of $n$: FOM-L \cite{seg11}, SOM-L \cite{2002}, and TOM \eqref{iter}.}
\label{table_comparison_legendre}
\end{table}

\subsection{Hermite polynomial}	
      In this section, the proposed method is used to find all the zeros of $H_n(x)$ and compared with  ASY \cite{alex1}, FOM-H \cite{seg1}, and SOM-H \cite{2002}. First, we present the implementation procedure for the proposed third-order iterative method. Using the procedure discussed in Section \ref{HP-T}, with initial guess $\frac{\pi}{2}$, one can get all the zeros of $H_n(x)$ by employing the proposed third-order iterative method 
      \begin{equation}\label{it-He-toh-z}
          z_{k+1}=z_k-\frac{2h(z_k)}{2+h^2(z_k)+\frac{z_k}{n+1}h(z_k)},
      \end{equation}
      where $h(z)=t(x(z))=-\sqrt{2(n+1)}\frac{H_n(x(z))}{H_{n+1}(x(z))}$ and $x(z)=\frac{z}{\sqrt{2(n+1)}}$. Expressing the iterative method \eqref{it-He-toh-z} in terms of $x$ one can get
      \begin{equation}\label{it-He-toh-x}
          x_{k+1}= x_k-\frac{2 t(x_k)}{\sqrt{2(n+1)}(2+t^2(x_k))+2x_kt(x_k)}.
      \end{equation}
      Consequently, during the implementation, one has to evaluate $\frac{H_n(x)}{H_{n+1}(x)}$ at each step of the iterative procedure. The local Taylor series method discussed in \cite{seg1,i1} is used for evaluating the ratio $\frac{H_n(x)}{H_{n+1}(x)}$. Let $y(x)=\lambda{\rm e}^{-\frac{x^2}{2}}H_n(x)$, where $\lambda$ is a normalization constant. Consequently, $\frac{H_n(x)}{H_{n+1}(x)}=\frac{y(x)}{xy(x)-y'(x)}$. Hence, $t(x)=-\frac{\sqrt{2(n+1)}y(x)}{xy(x)-y'(x)}$. To calculate $y(x)$ and $y'(x)$ we use the following truncated Taylor series with suitable center $\delta$
      \begin{equation}\label{Taylor1}
      y(x) = \sum_{k=0}^{N} \frac{y^{(k)}(\delta)}{k!} (x-\delta)^k, \quad 
	   y'(x) = \sum_{k=0}^{N} \frac{y^{(k+1)}(\delta)}{k!} (x-\delta)^k.
       \end{equation}
       During the numerical simulation,  we set $N=\min\{50,N_0\}$ where $N_0$ is the least integer satisfying
      $\zeta_{N_0}(x) = \left|\frac{\frac{y^{(N_0+1)}(\delta) (x-\delta)^{N_0}}{N_0!}}{\sum_{k=0}^{N_0} \frac{y^{(k+1)}(\delta) (x-\delta)^k}{k!}}\right| < 10^{-25}.$ Note that $y$ satisfies the following differential equations (see \cite[eq. 22]{seg1})
      \begin{eqnarray} 
      \label{N-Ev-D1}
          && y''(x)+(2n+1-x^2)y(x)=0.\\
          && y'''(x)+(2n+1-x^2)y'(x)-2xy(x)=0.\\
          \label{N-Ev-RR}
          && y^{(k+2)}(x) + (2n+1 - x^2) y^{(k)}(x) - 2k x y^{(k-1)}(x) - k(k-1) y^{(k-2)}(x) = 0, \mbox{for $k=2,3,\cdots$}.
      \end{eqnarray}
      Thus, if we know $y(\delta)$ and $y'(\delta)$, using \eqref{N-Ev-D1} -- \eqref{N-Ev-RR}, one can easily find $y^{(k)}(\delta)$ for all $k\geq 2$. Now, choose $\lambda=\frac{1}{H_n'(0)}$ if $n$ is odd. Subsequently, $y(0)=0$ and $y'(0)=1$. Similarly, if $n$ is even, choose $\lambda=\frac{1}{H_n(0)}$. Subsequently, $y(0)=1$ and $y'(0)=0$.

      Let $x^1_0$ be the initial guess for the first positive zero $x^1$ of $H_n(x)$. To get the first iterate $x^1_1$, $y(x^1_0)$ and $y'(x^1_0)$ can be calculated using Eqn. \eqref{Taylor1} with center $\delta=0$. Let $m \in \mathbb{N}$. Note that, at the end of $m^{\mbox{th}}$ iteration the following values $x^1_m$, $y(x^1_{m-1})$ and $y'(x^1_{m-1})$ are readily available. To get the $(m+1)^{\mbox{th}}$ iterate $x^1_{m+1}$,  $y(x^1_m)$ and $y'(x^1_m)$ can be calculated using Eqn. \eqref{Taylor1} with center $\delta=x^1_{m-1}$.  The following stopping criteria $\left|\frac{x^1_{m+1}-x^1_m}{x^1_m}\right| < 10^{-10}$ is used to stop this iterative procedure. To compute the second positive zero $x^2$, the initial guess is taken as $x^2_0= x^1_{m+1} + \frac{\pi}{2\sqrt{2(n+1)}}$. To get the first iterate $x^2_1$, $y(x^2_0)$ and $y'(x^2_0)$ can be calculated using Eqn. \eqref{Taylor1} with center $\delta=x^1_{m}$ and repeat the above procedure. Continuing this process, one can get all remaining positive zeros of $y(x)$. Furthermore, as mentioned in Remark \ref{remark5}, one can use improved initial approximations for all but the first three zeros.


    Now, the proposed method TOM is compared with ASY \cite{alex1}, FOM-H \cite{seg1}, and SOM-H \cite{2002}. Note that ASY is an asymptotic-based method. The implementation procedure for ASY is available for the Julia programming language in \cite{alex_code}. For this comparison study, the MATLAB version of \cite{alex_code} is used without changing any parameter, and this modified version is available in \cite{code}. Similarly, the implementation procedure for FOM-H \cite{seg1} is available\footnote{https://personales.unican.es/segurajj/gaussian.html} for the FORTRAN programming language. For this comparison study, the MATLAB version of the code is used, and this modified version is available in \cite{code}. It is worth mentioning that the implementation procedure of  TOM, FOM-H\cite{seg1}, and SOM-H\cite{2002} depends on the evaluation of ratios involving suitable Hermite polynomials. All these ratios are evaluated using the local Taylor series method discussed in \cite{seg1,i1}.  

    Figure \ref{RE} presents the relative error \eqref{error1} at the zeros of $H_{10000}(x)$ computed using FOM-H, SOM-H, ASY, and TOM. From the figure, it is clear that the proposed method TOM provides good accuracy.
\begin{figure}[htbp]
\centering

\begin{subfigure}[t]{0.48\textwidth}
    \centering
    \includegraphics[width=\linewidth]{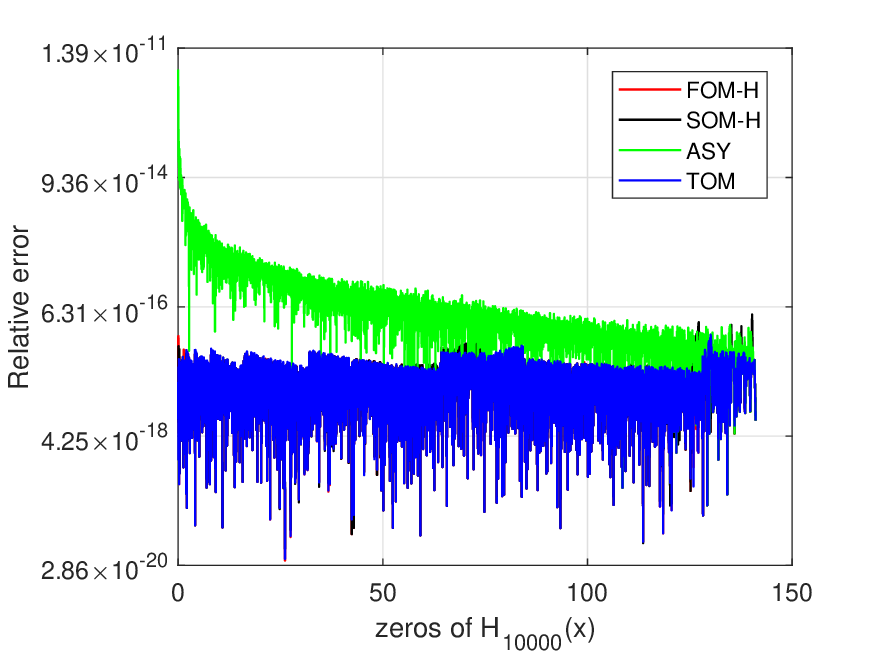}
    \caption{Relative error \eqref{error1} at zeros of $H_{10000}(x)$}
    \label{Hermite_profile}
\end{subfigure}
\hfill
\begin{subfigure}[t]{0.48\textwidth}
    \centering
    \includegraphics[width=\linewidth]{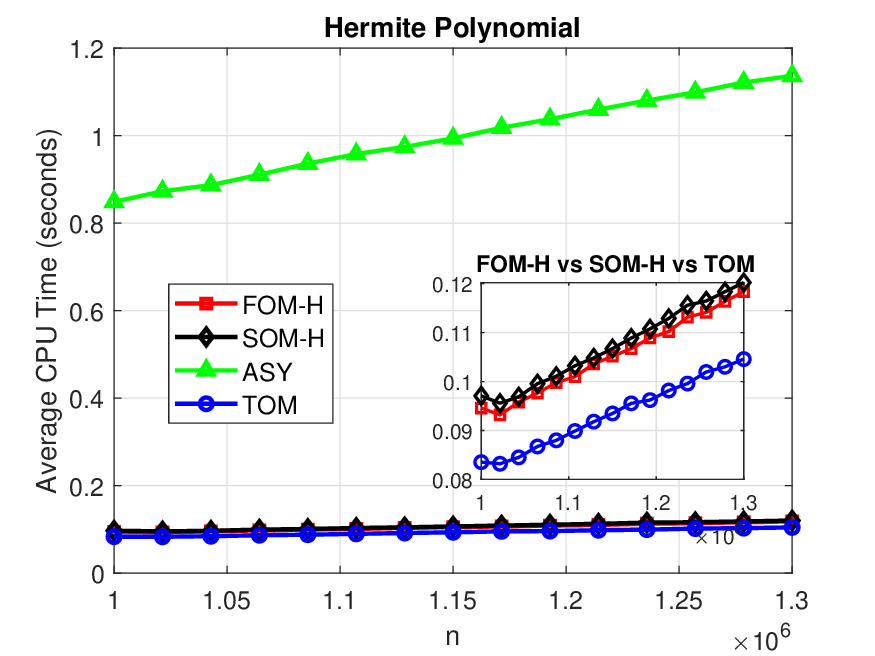}
    \caption{Average CPU time}
    \label{CPU_hermite}
    \label{RE}
\end{subfigure}

\caption{FOM-H \cite{seg1} vs SOM-H \cite{2002} vs ASY \cite{alex1} vs TOM \eqref{iter} in finding all zeros of $P_n(x)$.}
\label{fig:error_comparison}
\end{figure}

   Figure~\ref{CPU_hermite} shows the average of ten CPU times for the methods ASY \cite{alex1}, FOM-H \cite{seg1}, SOM-H \cite{2002}, and the proposed method TOM for finding all the zeros of $H_n(x)$ for $n \in (10^6, 13\times10^5)$.  More specifically, all the zeros of fifteen polynomials of degree varying from $10^6$ to $13\times 10^5$ obtained using ASY, FOM-H, SOM-H and the proposed method TOM. For each polynomial, every method was executed ten times, and the average CPU time was recorded. From Figure~\ref{CPU_hermite}, it is clear that the proposed method TOM is faster than the other methods for these polynomials. 

        	



    \begin{table}[h]
\centering
\small
\setlength{\tabcolsep}{4pt}
\renewcommand{\arraystretch}{1.1}

\begin{tabular}{|c|cc|cc|cc|}
\hline
\multirow{2}{*}{\textbf{$n$}}
& \multicolumn{2}{c|}{\textbf{FOM-H} \cite{seg1}}
& \multicolumn{2}{c|}{\textbf{SOM-H} \cite{2002}}
& \multicolumn{2}{c|}{\textbf{Proposed-TOM} \eqref{iter}} \\ \cline{2-7}

& \textbf{T. Iter} & \textbf{A. Time}
& \textbf{T. Iter} & \textbf{A. Time}
& \textbf{T. Iter} & \textbf{A. Time} \\ \hline

1000000 & 508135 & 0.0954 & 525342 & 0.0975 & 524354 & 0.0819 \\
1050000 & 532925 & 0.0967 & 549676 & 0.0992 & 548692 & 0.0841 \\
1100000 & 557731 & 0.1010 & 574063 & 0.1024 & 573080 & 0.0871 \\
1200000 & 607382 & 0.1093 & 622971 & 0.1121 & 621993 & 0.0964 \\
1300000 & 657076 & 0.1189 & 672030 & 0.1209 & 671049 & 0.1040 \\ \hline
\end{tabular}

\caption{Iteration count and average time to compute all zeros of $H_n(x)$ for different values of $n$: FOM-H \cite{seg1}, SOM-H \cite{2002}, and TOM \eqref{iter}.}
\label{table_comparison_hermite}
\end{table}

Table~\ref{table_comparison_hermite} presents the total number of iteration and the average time taken by FOM-H \cite{seg1}, SOM-H \cite{2002}, and the proposed method TOM \eqref{iter} for computing all the zeros of selected Hermite polynomials of various degrees in the range $[10^{6}, 13\times 10^{5}]$. For this problem, the iterative methods TOM and FOM-H \cite{seg1} can be written as Eq. \ref{it-He-toh-x}
and 
\begin{equation*}
    x_{k+1} = x_k - \frac{1}{\sqrt{\mathcal{A}(x_k)}}\arctan\left(\sqrt{\mathcal{A}(x_k)}t_1(x_k)\right), \quad t_1(x) = \frac{y(x)}{y'(x)},\quad \mathcal{A}(x) = 2n+1-x^2,
\end{equation*}
respectively. From Table \ref{table_comparison_hermite}, it can be seen that, although the proposed third-order method TOM requires more iterations than the fourth-order method FOM-H \cite{seg1}, TOM takes less time to do the same job than FOM-H. This may be due to the computational complexity involved in repeatedly evaluating these functions a large number of times.

	\subsection{Bessel function}
    In this section, the proposed method is used to find all zeros of $J_{\mu}(x)$ in a given interval and compared with methods MNM-B \cite{seg2} and SOM-B \cite{2002}.  For $\mu>\frac{1}{2}$, all positive zeros of $J_{\mu}(x)$ lie in the interval $(\mu, \infty)$. Using the procedure discussed in Section \ref{Bessel_im}, one can get all zeros of $J_{\mu}(x)$ in the given interval $[a, b]\subseteq [\mu,\infty)$ by employing the proposed third-order iterative method
    \begin{equation}\label{iter_bessel}
        x_{k+1}=x_k-\frac{2h(x_k)}{2+h^2(x_k)-\frac{2\mu-1}{x_k}h(x_k)},
    \end{equation}
    where $h(x)=\frac{J_{\mu}(x)}{J_{\mu-1}(x)}$. The procedure starts with the initial guess $b-\frac{\pi}{2}\left(\frac{1-sign(h(b))}{2}\right)$. Let $x_m^k$ denotes the $m^{\mbox{th}}$ iterate of \eqref{iter_bessel} to obtain the $k^{\mbox{th}}$ zero in the interval $[a, b]$. The following stopping criteria $\left|x_{m+1}^k-x_m^k\right|<10^{-10}$ is used to stop the iteration. Now, the proposed method TOM is compared with the iterative methods MNM-B \cite{seg2}, and SOM-B \cite{2002}. For all three methods, the function $h(x)$ involving the ratios of Bessel functions is evaluated using MATLAB built-in command \texttt{`besselj'}. \cite[Lemma 2.1]{1998} and \cite[Lemma 4.2]{2002} ensure that $b-\frac{\pi}{2}\left(\frac{1-sign(h(b))}{2}\right)$ is a suitable initial guess for MNM-B and SOM-B, respectively.
  	
    Figure \ref{bessel_fig_1} presents the relative error \eqref{error1} at the zeros of $J_{10000}(x)$ in the interval $I_0=[\mu, ~\mu+10000]$ computed using  MNM-B \cite{seg2}, SOM-B \cite{2002}, and TOM. From the figure, it is clear that the proposed method TOM provides good accuracy.

   Figure~\ref{CPU_bessel}, presents the average of ten CPU times for the methods MNM-B \cite{seg2}, SOM-B \cite{2002}, and the proposed method TOM for finding all the zeros of $J_{\mu}(x)$ in the interval $I_1 = [\mu, ~\mu+100000]$ for $\mu \in [10000, 110000]$. More specifically, all the zeros of $J_{\mu}(x)$ in the interval $I_1$ for fifteen different $\mu$ varying from $10000$ to $110000$ are obtained using MNM-B, SOM-B, and the proposed method TOM. For each $\mu$, every method was executed ten times, and the average CPU time was recorded. From Figure \ref{CPU_bessel}, it is clear that the proposed method, TOM, is faster than the other methods.

\begin{figure}[htbp]
\centering

\begin{subfigure}[t]{0.48\textwidth}
    \centering
    \includegraphics[width=\linewidth]{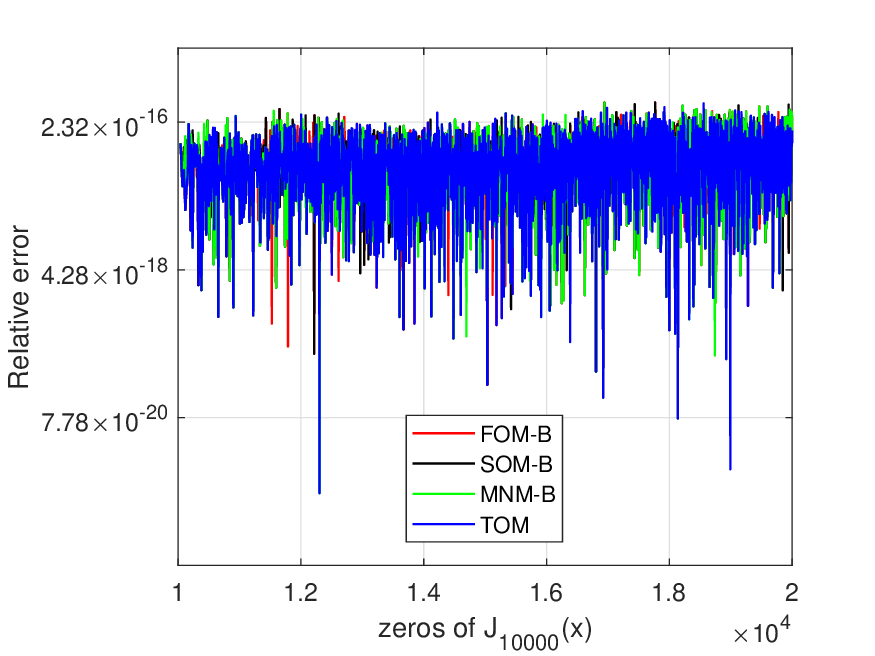}
    \caption{Relative error \eqref{error1} at zeros of $J_{10000}(x)$ in the interval $I_0$}
    \label{bessel_fig_1}
\end{subfigure}
\hfill
\begin{subfigure}[t]{0.48\textwidth}
    \centering
    \includegraphics[width=\linewidth]{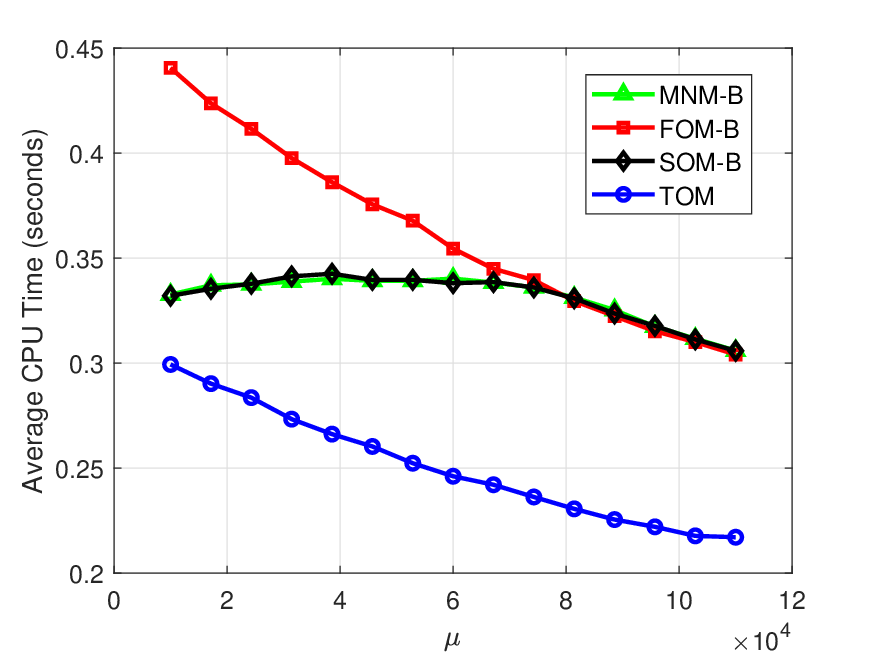}
    \caption{Average CPU runtime for computing all zeros of $J_{\mu}(x)$ in the interval $I_1$ for various choices of $\mu\in [10000, 110000]$}
    \label{CPU_bessel}
\end{subfigure}

\caption{FOM-B \cite{2010} vs MNM-B \cite{seg2} vs SOM-B \cite{2002} vs TOM \eqref{iter} in finding zeros of $J_{\mu}(x)$.}
\end{figure}

\begin{table}[h]
\centering
\small
\setlength{\tabcolsep}{4pt}  
\renewcommand{\arraystretch}{1.1}

\begin{tabular}{|c|cc|cc|cc|cc|}
\hline
\multirow{2}{*}{\textbf{$\mu$}} 
& \multicolumn{2}{c|}{\textbf{FOM-B} \cite{2010}} 
& \multicolumn{2}{c|}{\textbf{MNM-B} \cite{seg2}} 
& \multicolumn{2}{c|}{\textbf{SOM-B} \cite{2002}} 
& \multicolumn{2}{c|}{\textbf{Proposed-TOM} \eqref{iter}} \\ \cline{2-9}

& \textbf{T. Iter} & \textbf{A. Time} 
& \textbf{T. Iter} & \textbf{A. Time} 
& \textbf{T. Iter} & \textbf{A. Time} 
& \textbf{T.Iter} & \textbf{A. Time} \\ \hline

1000  & 63326 & 0.4648 & 65337 & 0.3159 & 65315 & 0.3160 & 63726 & 0.3114 \\
3000  & 62628 & 0.4576 & 66612 & 0.3225 & 66587 & 0.3235 & 63293 & 0.3087 \\
6000  & 61621 & 0.4508 & 67658 & 0.3298 & 67633 & 0.3279 & 62518 & 0.3043 \\
9000  & 60661 & 0.4442 & 68367 & 0.3310 & 68345 & 0.3310 & 61720 & 0.3003 \\
11000 & 60045 & 0.4381 & 68697 & 0.3321 & 68672 & 0.3313 & 61186 & 0.2976 \\ \hline
\end{tabular}

\caption{Iteration count to compute all zeros of $J_{\mu}(x)$ for different values of $\mu$ in the interval $I_1$: FOM-B \cite{2010}, MNM-B \cite{seg2}, SOM-B \cite{2002}, and TOM \eqref{iter}.}
\label{table_comparison_bessel}
\end{table}

Table \ref{table_comparison_bessel} presents the total number of iteration and the average time taken by FOM-B \cite{2010}, MNM-B \cite{seg2}, SOM-B \cite{2002}, and TOM \eqref{iter} for computing all the zeros of selected Bessel functions in the interval $I_1 = (\mu, \mu+100000)$ for various $\mu$ ranges in the interval $[1000, 11000]$. For this problem, the iterative methods TOM and FOM-B can be written as \ref{iter_bessel}
and 
\begin{equation*}
x_{k+1} = x_k - \frac{1}{\sqrt{\mathcal{A}(x_k)}} \arctan\left(\sqrt{\mathcal{A}(x_k)}t_1(x_k)\right), \quad t_1(x) = \frac{J_{\mu}(x)}{J_{\mu-1}+\frac{1}{x}(\frac{1}{2}-\mu)J_{\mu}(x)}, \quad \mathcal{A}(x) = 1+\frac{\frac{1}{4}-\mu^2}{x^2},
\end{equation*}
respectively. From Table \ref{table_comparison_bessel}, it can be seen that, although the proposed third-order method TOM requires more iterations than the fourth-order method FOM-B \cite{2010}, TOM takes less time to do the same job than FOM-B. This may be due to the computational complexity involved in repeatedly evaluating these functions a large number of times.

\subsection{Cylinder function}

In this section, the proposed method is used to find all zeros of $C_{\mu}(x)$ in a given interval and compared with methods  MNM-C \cite{seg2} and SOM-C \cite{2002}. For $\mu>\frac{1}{2}$ and $\alpha<\frac{5\pi}{6}$, all positive zeros of $C_{\mu}(x)$ lie in the interval $(\mu, \infty)$. Using the procedure discussed in Section \ref{cylinder}, one can get all zeros of $C_{\mu}(x)$ in the given interval $[a, b] \subset [\mu,\infty)$ by employing the proposed third-order iterative method
\begin{equation}\label{iter_cylinder}
     x_{k+1}=x_k-\frac{2h(x_k)}{2+h^2(x_k)-\frac{2\mu-1}{x_k}h(x_k)},
\end{equation}
where $h(x)=\frac{C_{\mu}(x)}{C_{\mu-1}(x)}$. The procedure starts with the initial guess $b-\frac{\pi}{2}\left(\frac{1-sign(h(b))}{2}\right)$. Let $x_m^k$ denots the $m^{\mbox{th}}$ iterate of \eqref{iter_cylinder} to obtain the $k^{\mbox{th}}$ zero in the interval $[a, b]$. The following stopping criteria $\left|x_{m+1}^k-x_m^k\right|<10^{-10}$ is used to stop the iteration. Now, the proposed method TOM is compared with the iterative methods MNM-C \cite{seg2}, and SOM-C \cite{2002}. For all three methods, the function $h(x)$ involving the ratios of Cylinder functions is evaluated using MATLAB built-in commands \texttt{`besselj'} and \texttt{`bessely'}. \cite[Lemma 2.1]{1998} and \cite[Lemma 4.2]{2002} ensure that $b-\frac{\pi}{2}\left(\frac{1-sign(h(b))}{2}\right)$ is a suitable initial guess for MNM-C and SOM-C.

Figure \ref{re_2} presents the relative error \eqref{error1} at the zeros of $C_{10000}(x)$ in the interval $I_0 = [\mu, ~\mu+10000]$ and $\alpha=0.75$ computed using MNM-C \cite{seg2}, SOM-C \cite{2002}, and TOM. From the figure, it is clear that the proposed method TOM provides good accuracy.




Figure \ref{CPU_cyl_a}, presents the average of ten CPU times for the methods MNM-C \cite{seg2}, SOM-C \cite{2002}, and the proposed method TOM for finding all the zeros of $C_{\mu}(x)$ in the interval $I_1 = [\mu, ~\mu+100000]$ for $\mu \in [10000, 110000]$ and $\alpha=0.5$. More specifically, all the zeros of $C_{\mu}(x)$ in the interval $I_1$ for fifteen different $\mu$ varying from $10000$ to $110000$ are obtained using  MNM-C, SOM-C and the proposed method TOM. For each $\mu$, every method was executed ten times, and the average CPU time was recorded. From Figure \ref{CPU_cyl_a}, it is clear that the proposed method, TOM, is faster than the other methods.
    
\begin{figure}[htbp]
\centering

\begin{subfigure}[t]{0.48\textwidth}
    \centering
    \includegraphics[width=\linewidth]{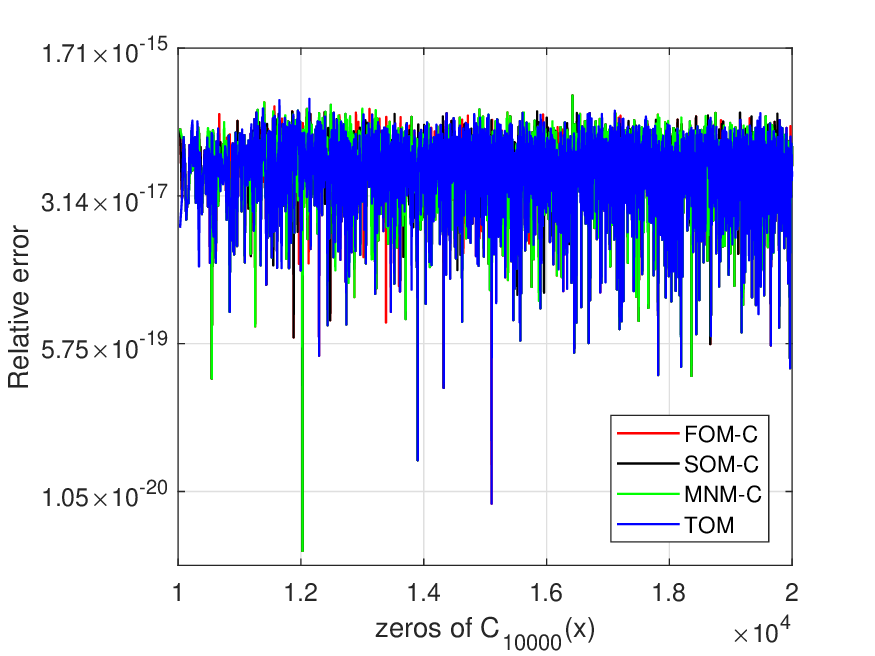}
    \caption{Relative error \eqref{error1} at zeros of $C_{10000}(x)$ in the interval $I_0$
    with $\alpha=0.75$}
    \label{re_2}
\end{subfigure}
\hfill
\begin{subfigure}[t]{0.48\textwidth}
    \centering
    \includegraphics[width=\linewidth]{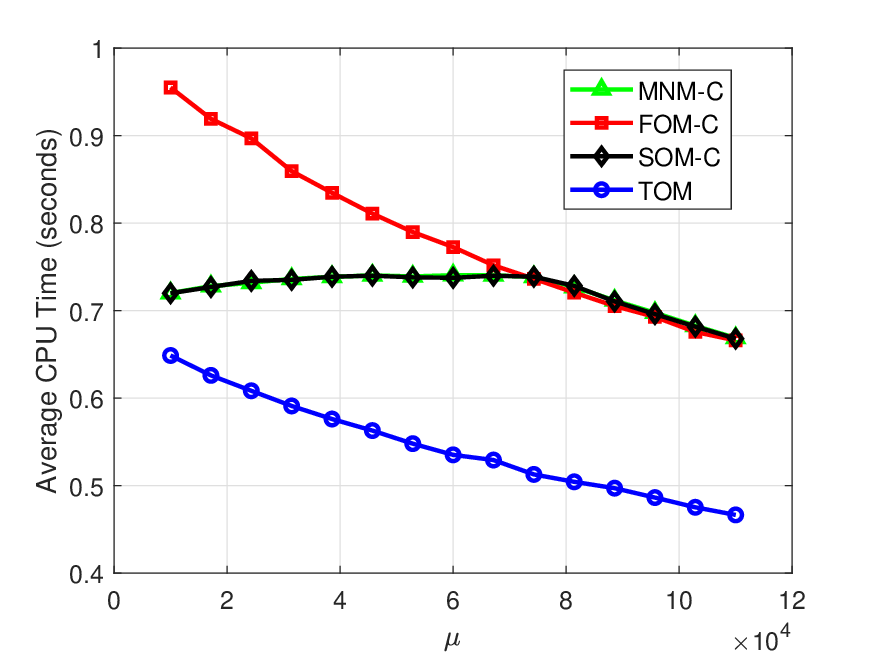}
    \caption{Average CPU runtime for computing all zeros of $c_{\mu}(x)$ in the interval $I_1$
    for various choices of $\mu \in [10000,110000]$ with $\alpha=0.75$}
    \label{CPU_cyl_a}
\end{subfigure}

\caption{FOM--C \cite{2010}, MNM--C \cite{1998}, SOM--C \cite{2003}, and TOM \eqref{iter} in finding zeros of $c_{\mu}(x)$.}
\end{figure}

\begin{table}[h]
\centering
\small
\setlength{\tabcolsep}{4pt}
\renewcommand{\arraystretch}{1.1}

\begin{tabular}{|c|cc|cc|cc|cc|}
\hline
\multirow{2}{*}{\textbf{\begin{tabular}[c]{@{}c@{}}$\alpha=0.75$\\ $\mu$\end{tabular}}}
& \multicolumn{2}{c|}{\textbf{FOM-C} \cite{2010}}
& \multicolumn{2}{c|}{\textbf{MNM-C} \cite{1998}}
& \multicolumn{2}{c|}{\textbf{SOM-C} \cite{2003}}
& \multicolumn{2}{c|}{\textbf{Proposed-TOM} \eqref{iter}} \\ \cline{2-9}

& \textbf{T. Iter} & \textbf{A. Time}
& \textbf{T. Iter} & \textbf{A. Time}
& \textbf{T. Iter} & \textbf{A. Time}
& \textbf{T. Iter} & \textbf{A. Time} \\ \hline

1000  & 63328 & 1.0293 & 65337 & 0.6904 & 65314 & 0.6882 & 63725 & 0.6736 \\
3000  & 62630 & 0.9942 & 66600 & 0.7021 & 66580 & 0.7000 & 63295 & 0.6692 \\
6000  & 61623 & 0.9675 & 67645 & 0.7109 & 67628 & 0.7074 & 62516 & 0.6610 \\
9000  & 60664 & 0.9578 & 68349 & 0.7202 & 68332 & 0.7259 & 61713 & 0.6581 \\
11000 & 60048 & 0.9432 & 68686 & 0.7197 & 68660 & 0.7206 & 61189 & 0.6460 \\ \hline
\end{tabular}

\caption{Iteration count to compute all zeros of $C_{\mu}(x)$ in the interval $I_1$ for different values of $\mu$: FOM-C \cite{2010}, MNM-C \cite{1998}, SOM-C \cite{2003}, and TOM \eqref{iter}.}
\label{table_comparison_cylinder}
\end{table}

Table \ref{table_comparison_cylinder} presents the total number of iteration and the average time taken by FOM-C \cite{2010}, MNM-C \cite{seg2}, SOM-C \cite{2002}, and TOM \eqref{iter} for computing all the zeros of selected cylinder functions in the interval $I_1 = (\mu, \mu+100000)$ for various $\mu$ ranges in the interval $[1000, 11000]$ and $\alpha = 0.75$. For this problem, the iterative methods TOM and FOM-C can be written as Eq. \ref{iter_cylinder}
and 
\begin{equation*}
x_{k+1} = x_k - \frac{1}{\sqrt{\mathcal{A}(x_k)}}\arctan\left(\mathcal{A}(x_k)t_1(x_k)\right), \quad t_1(x) = \frac{C_{\mu}(x)}{C_{\mu-1}+\frac{1}{x}(\frac{1}{2}-\mu)C_{\mu}(x)}, \quad \mathcal{A}(x) = 1+\frac{\frac{1}{4}-\mu^2}{x^2},
\end{equation*}
respectively. From Table \ref{table_comparison_bessel}, it can be seen that, although the proposed third-order method TOM requires more iterations than the fourth-order method FOM-C \cite{2010}, TOM takes less time to do the same job than FOM-C. This may be due to the computational complexity involved in repeatedly evaluating these functions a large number of times.

\section{Conclusions}
This manuscript developed a novel third-order iterative procedure to compute all the zeros of special functions that are solutions of a second-order linear ODE. This work derived sufficient conditions to ensure the global convergence of the proposed method. The well-known orthogonal polynomials, Legendre and Hermite, and the frequently used special functions, such as the Bessel function, the confluent hypergeometric function, and the Coulomb wave function, satisfy these sufficient conditions for global convergence. This study establishes new algorithms for finding all the zeros of the Legendre polynomial and the Hermite polynomial. In addition, this study also obtains new algorithms for finding all the zeros of the Bessel function and the cylinder function within a given interval. This manuscript provides extensive numerical simulations to demonstrate the theoretical results. Observations from numerical experiments support theoretical claims. This study presents a performance comparison of the proposed method with the asymptotic-based method \cite{alex1}, fourth-order iterative methods \cite{seg1, seg11, 2010}, and second-order iterative methods \cite{2003, 1998, 2002}. Numerical simulations indicate that the proposed method converges rapidly in specific scenarios.

	\subsection*{Acknowledgement}
	Dhivya Prabhu K is thankful to the CSIR, India (Grant No. 09/1022(11054)/2021-EMR-I) for the financial support. During the preparation of this work, the authors utilised the Grammarly application to enhance the readability. After using this tool/service, the authors reviewed and edited the content as needed and take full responsibility for the content of the publication.

\end{document}